\newtheorem{theorem}{Theorem}
\newtheorem{lemma}{Lemma}
\newtheorem{corollary}{Corollary}
\newcommand{\R}{\mathbb{R}}
\newcommand{\Z}{\mathbb{Z}}
\definecolor{polytopeColor}{gray}{0.9}
\def\Circuits{{\mathcal C}}
\DeclareMathOperator{\rank}{rank}
\def\ve#1{\mathchoice{\mbox{\boldmath$\displaystyle\bf#1$}}
{\mbox{\boldmath$\textstyle\bf#1$}}
{\mbox{\boldmath$\scriptstyle\bf#1$}}
{\mbox{\boldmath$\scriptscriptstyle\bf#1$}}}
\newcommand\veb{{\ve b}}
\newcommand\vecc{{\ve c}}
\newcommand\ved{{\ve d}}
\newcommand\vece{{\ve e}}
\newcommand\veg{{\ve g}}
\newcommand\vel{{\ve l}}
\newcommand\veu{{\ve u}}
\newcommand\vev{{\ve v}}
\newcommand\vew{{\ve w}}
\newcommand\vex{{\ve x}}
\newcommand\vey{{\ve y}}
\newcommand\vez{{\ve z}}
\newcommand\veo{{\ve 0}}
\newcommand{\CD}{\mathcal{CD}}
\newcommand{\T}{{\intercal}} 
\newcommand{\DeclareBracket}[3]{
  \newcommand{#1}[2][]{%
  \ifthenelse%
  {\equal{##1}{}}%
  {\left#2##2\right#3}%
  {\csname ##1l\endcsname#2##2\csname ##1r\endcsname#3}}}
\definecolor{gray_e}{gray}{0.85}
\definecolor{gray_m}{gray}{0.9}
\definecolor{gray_f}{gray}{0.95}
\definecolor{gray_r}{gray}{0.8}
\definecolor{gray_b}{gray}{0.8}
\definecolor{gray_br}{gray}{0.75}
\definecolor{gray_s}{gray}{0.7}
  \tikzset{> = latex}
\begin{document}

\title{Edges vs Circuits: a Hierarchy of Diameters in Polyhedra}

\author{S. Borgwardt, J. De Loera, E. Finhold}

\maketitle

\begin{abstract}
The study of the graph diameter of polytopes is a classical open problem in polyhedral geometry and the theory of linear optimization. In this paper we continue the investigation initiated in \cite{bfh-14} by introducing a vast hierarchy of generalizations to the notion of graph diameter. This hierarchy provides some interesting lower bounds for the usual graph diameter. After explaining the structure of the hierarchy and discussing these bounds, we focus on clearly explaining the differences and similarities among the many diameter notions of our hierarchy. Finally, we fully characterize the hierarchy in dimension two. It collapses into fewer categories, for which we exhibit the ranges of values that can be realized as diameters.
\end{abstract}

\noindent {\bf MSC 2010:} 52B05, 52B55, 52B40, 52C40, 52C45, 90C05, 90C49.

\noindent {\bf Keywords:} diameter of polyhedra, graph of polyhedra, one-skeleton of polytopes, matroids.

\section{Introduction}

Dantzig's Simplex method from 1947 and its variations are the most common algorithms for solving linear programs. It can be viewed as a family of combinatorial local search algorithms on the graph of a convex polyhedron. More precisely, the search is done over the \emph{graph of the polyhedron}, which is composed of the zero- and one-dimensional faces of the feasible region (called \emph{vertices} and \emph{edges}). The search moves from a vertex of  the graph to a better neighboring vertex joined by an edge.

 The \emph{(graph) diameter} (or \emph{combinatorial diameter}) of a polyhedron is the diameter of its graph, the length of the longest shortest path among all possible pairs of vertices. Despite great effort of analysis, it remains open whether there is always a polynomial bound on the shortest path between two vertices in the graph (see for example \cite{ks-10}). While trying to understand this well-known problem, the authors of \cite{bfh-14} introduced a very natural generalization of the notion of  diameter. Here we continue their work by introducing a hierarchy of possible diameter definitions. We will see that the hierarchy includes the traditional graph diameter and the \emph{circuit diameter} introduced in \cite{bfh-14}.
 
In the following we will consider polyhedra of the general form $P(\veb,\ved)=\{\,\vez\in \R^n:A\vez=\veb,\,B\vez\leq \ved\,\}$ for matrices $A\in\Z^{m_A\times n}$, $B\in\Z^{m_B\times n}$. Note that the matrix $B$ should have full row rank $n$ for the polyhedron to have vertices and edges.
 The \emph{circuits} or \emph{elementary vectors} associated with matrices $A$ and $B$ are those vectors  $\veg\in \ker(A) \setminus \{\,\ve 0\,\}$, for which $B\veg$ is support-minimal in the set $\{\, B\vez:\vez\in \ker(A)\setminus \{\,\ve 0\,\}\,\}$. The vectors $\veg$ are always normalized to  have coprime integer components and thus, there are only finitely many such vectors. It can be shown that the set of circuits consists exactly of all edge directions of $P(\veb,\ved)$ for varying $\veb$ and $\ved$. In particular the circuits provide augmenting directions to any non-optimal solution of $\min\{\,\vecc^\T\vez:A\vez=\veb,\,B\vez\leq \ved\,\}$ for any choice of $\veb$, $\ved$ and $\vecc$. It should be noted that the circuits are
as expected related to the matroid of linear dependences of the matrix {\small $ \left( \begin{array}{cc}
A & O  \\
B & I  \end{array} \right).$} Thus by using the circuits as measurement steps for a distance we are allowing for bounds in a family of parametric polyhedra that result from
translation of defining hyperplanes.  
\smallskip

We remark that circuits have already played a fundamental role in various aspects of the theory of linear optimization (see e.g., \cite{bk-92,orientedmatroids,blandpivots,ft-97,rocka}).  Note also that for a linear program, augmentation along circuit directions is a generalization of the Simplex method: While in the Simplex method one walks only along the graph (so in particular on the boundary) of one  polyhedron for fixed $\veb,\ved$, the circuit steps could go through the interior of the polyhedron (but along \emph{potential} edge directions of other polyhedra in the same parametric family). 

Let us now define a very general notion of \emph{distance} based on circuits. 
 Let $P$ be a polyhedron and let $\Circuits$ be the set of circuits for the associated matrices $A$ and $B$. For a pair of two vertices $\vev^{(1)},\vev^{(2)}$ of $P$, we call a sequence $\vev^{(1)}=\vey^{(0)},\ldots,\vey^{(k)}=\vev^{(2)}$ a \emph{circuit walk of length $k$} if for all $i=0,\ldots,k-1$ we have
$\vey^{(i+1)}-\vey^{(i)}=\alpha_i\veg^{i}$ for some  circuit $\veg^{i}$ and some $\alpha_i>0$. Note that because we are allowing the $\alpha_i$ to be arbitrary real non-negative numbers there are
walks that can be infinite, but we restrict our attention to those that are finite and we can define:
 The \emph{circuit distance} from $\vev^{(1)}$ to $\vev^{(2)}$  is the minimum length of a circuit walk from $\vev^{(1)}$ to $\vev^{(2)}$. We call a circuit walk that realizes the circuit distance a \emph{shortest} or \emph{optimal} walk.
 The \emph{circuit diameter} of $P$ is the maximum circuit distance between any two vertices of $P$.

Our hierarchy will include different notions of circuit distances which arise by considering circuit walks that satisfy additional properties. We write $P$ for $P(\veb,\ved)$ for
fixed $\veb$ and $\ved$:
\begin{enumerate}
\item[\textbf{(e)}] If $\vey^{(i)}$ and $\vey^{(i+1)}$ are neighboring vertices in the graph of the polyhedron for all $i=0,\dots, k-1$, we call the walk an {\emph{edge walk}}. This is the term that corresponds to the classical graph diameter of a polytope.
\item[\textbf{(f)}] If $\vey^{(i)}\in P$ for all $i=0,\dots, k-1$, then we say the circuit walk is \emph{feasible}. 
\item[\textbf{(m)}] If the extension multipliers $\alpha_i$ are maximal, i.e. if $\vey^{(i)}+\alpha\veg^{i}$ is infeasible (i.e., lies outside $P$) for all $\alpha>\alpha_i$, we say that the walk is of \emph{maximum extension length} or simply \emph{maximal}. Otherwise, we say that the extension is of \emph{arbitrary length}. 
\item[\textbf{(r)}] If no circuit is repeated, then we say the walk is \emph{non-repetitive}.
\item[\textbf{(b)}] If no pair of circuits $\veg^{i},-\veg^{i}$ is used, then we say the walk is \emph{non-backwards}.
\item[\textbf{(s)}]  Two vectors $\vex$ and $\vey$ are sign-compatible with respect to the matrix $B$ defining the polyhedron $P=\{\,\vez\in \R^n:A\vez=\veb,\,B\vez\leq \ved\,\}$, if $B\vex$ and $B\vey$ belong to the same orthant of $\R^{m_B}$, that is, their $i$-th components $(B\vex)_i$ and $(B\vey)_i$ satisfy $(B\vex)_i\cdot (B\vey)_i \geq 0$ for all $i=1,\ldots,m_B$.
If all the circuits are pairwise sign-compatible and are sign-compatible with the vector $\vev^{(2)}-\vev^{(1)}$, we say the walk is \emph{sign-compatible}. 
\end{enumerate}

In what follows, we consider circuit distances restricted to different combinations of these properties and relate them to each other. A prime example would be the following: In the Simplex method one is limiting augmentation directions to actual edge directions at the current vertex and always choosing maximal augmentations to another vertex. In particular one ensures that the next point on the walk is feasible. Hence such walks satisfy the properties \textbf{(e)}, \textbf{(f)} and \textbf{(m)}. For several of the distance concepts we present, we liberate ourselves from some of these restrictions: We try to go from $\vev^{(1)}$ to $\vev^{(2)}$ more efficiently by possibly going through the interior of the polyhedron along linear combinations of circuits. We are even willing to leave the feasible region if that may yield fewer steps. Figure \ref{fig:typesofrestriction} depicts some walks for different combinations of these properties.

	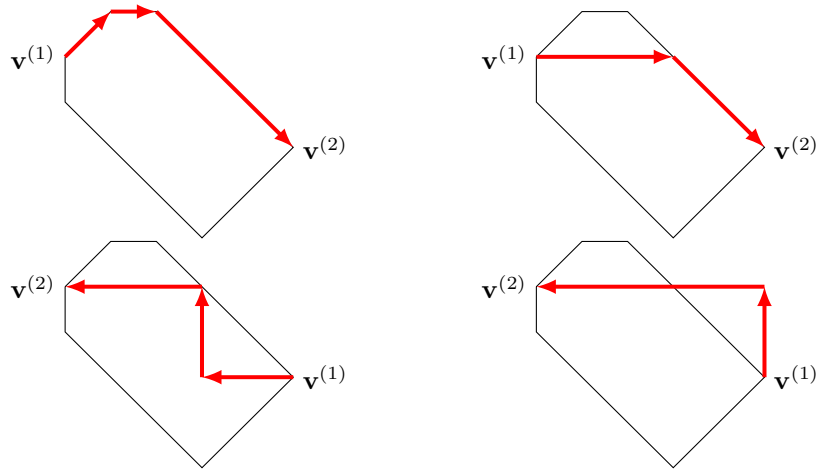
\begin{figure}[H]
		\centering
			\begin{tikzpicture}[scale=0.6]
					\coordinate (v1) at (0,1);
					\coordinate (v2) at (1,2);
					\coordinate (v3) at (2,2);
					\coordinate (v4) at (5,-1);
					\coordinate (v5) at (3,-3);
					\coordinate (v6) at (0,0);
					\draw[black] (v1)--(v2)--(v3)--(v4)--(v5)--(v6)--(v1);					
					\node[left] at (v1) {$\vev^{(1)}$};
					\node[right] at (v4) {$\vev^{(2)}$};
					\draw[line width= 1.5, red, ->] (v1)--(v2);
					\draw[line width= 1.5, red, ->] (v2)--(v3);
					\draw[line width= 1.5, red, ->] (v3)--(v4);
			\end{tikzpicture}
			\qquad \qquad
			\begin{tikzpicture}[scale=0.6]
					\coordinate (v1) at (0,1);
					\coordinate (v2) at (1,2);
					\coordinate (v3) at (2,2);
					\coordinate (v4) at (5,-1);
					\coordinate (v5) at (3,-3);
					\coordinate (v6) at (0,0);
					\draw[black] (v1)--(v2)--(v3)--(v4)--(v5)--(v6)--(v1);					
					\node[left] at (v1) {$\vev^{(1)}$};
					\node[right] at (v4) {$\vev^{(2)}$};
					\draw[line width= 1.5, red, ->] (v1)--(3,1);
					\draw[line width= 1.5, red, ->] (3,1)--(v4);
			\end{tikzpicture}
\qquad \qquad
			\begin{tikzpicture}[scale=0.6]
					\coordinate (v1) at (0,1);
					\coordinate (v2) at (1,2);
					\coordinate (v3) at (2,2);
					\coordinate (v4) at (5,-1);
					\coordinate (v5) at (3,-3);
					\coordinate (v6) at (0,0);
					\draw[black] (v1)--(v2)--(v3)--(v4)--(v5)--(v6)--(v1);					
					\node[left] at (v1) {$\vev^{(2)}$};
					\node[right] at (v4) {$\vev^{(1)}$};
					\draw[line width= 1.5, red, <-] (v1)--(3,1);
					\draw[line width= 1.5, red, <-] (3,1)--(3,-1);
					\draw[line width= 1.5, red, <-] (3,-1)--(v4);
			\end{tikzpicture}
\qquad \qquad
			\begin{tikzpicture}[scale=0.6]
					\coordinate (v1) at (0,1);
					\coordinate (v2) at (1,2);
					\coordinate (v3) at (2,2);
					\coordinate (v4) at (5,-1);
					\coordinate (v5) at (3,-3);
					\coordinate (v6) at (0,0);
					\draw[black] (v1)--(v2)--(v3)--(v4)--(v5)--(v6)--(v1);					
					\node[left] at (v1) {$\vev^{(2)}$};
					\node[right] at (v4) {$\vev^{(1)}$};
					\draw[line width= 1.5, red, <-] (v1)--(5,1);
					\draw[line width= 1.5, red, <-] (5,1)--(v4);
			\end{tikzpicture}
	\caption{An edge walk and a feasible maximal walk (first row). A feasible (repetitive) walk and an unrestricted walk (second row).}
	\end{figure}\label{fig:typesofrestriction}

We now introduce a uniform notation for our discussion. We use $\mathcal{CD}$ to refer to the circuit distance from $\vev^{(1)}$ to $\vev^{(2)}$ with no further restrictions. When considering only circuit walks on which we impose some of the above restrictions, we denote these restrictions by small subscript letters as used in the above list of properties. For example $\mathcal{CD}_{{f}s}$ refers to the feasible sign-compatible circuit distance, where the corresponding walk is feasible and sign-compatible, while $\mathcal{CD}_{{f}mr}$ means we have to use a feasible, maximal and non-repetitive walk. To have a simple wording, we call, for example, $\CD_{fm}$ the feasible maximal circuit distance and do the same for all other circuit distances. In addition, we here give explicit names to the four circuit distances that will form the core of our hierarchy:

Note that $\CD_{efm}$ is the classical \emph{graph distance} in the polytope $P$, while $\CD_{fm}$ corresponds to the original \emph{circuit distance} as introduced in \cite{bfh-14}. Further, we call $\CD_{f}$ the \emph{weak circuit distance} and $\CD$ the \emph{soft circuit distance}.  As we often have to carefully distinguish different types of circuit distance, we stick to identifying them by their properties in many cases, but these four distances are the most fundamental in our work (see Theorem \ref{thm:hierarchy} and the
central column of Figure \ref{Fig: awesome circuit hierarchy}).

Why are these distances interesting? First, note the graph diameter is bounded (below) by diameters that have much weaker properties and that therefore may be much easier to bound or to compute. Second, we will show the different diameters shed some light on bounding the graph diameter. For some polytopes the differences are large but in others they are not (e.g., in \cite{bdlf-14} we show there are only small differences for transportation polytopes). 
Many pairs of these circuit distances have easy-to-verify relationships to each other. For two given vertices, e.g{.} the weak circuit distance $\mathcal{CD}_f$ is at least as large as the soft circuit distance $\mathcal{CD}$ because we are just imposing an additional constraint. We denote this $\mathcal{CD}_f \geq \mathcal{CD}$. If there are polyhedra with vertices such that these two values differ, we write $\mathcal{CD}_f  > \mathcal{CD}$. Sometimes we will consider several such combinations at the same time. We then e.g.{} use $\mathcal{CD}_{f(s)} > \mathcal{CD}_{(s)}$ to refer to both $\mathcal{CD}_f > \mathcal{CD}$ and $\mathcal{CD}_{fs}> \mathcal{CD}_s$. 
Note that this notation is transitive: Clearly $\CD_{fm}\geq \CD_f \geq \CD$ implies $\CD_{fm} \geq \CD$ and  $\CD_{fm}> \CD_f \geq \CD$ implies $\CD_{fm} > \CD$. 
The main goal of this paper is to prove inequalities between the different distances, show how they strictly or weakly bound each other, and show how they differ.

Some general comments are in order before we list our results. First, as we will see later, some of optimal walks  are \emph{commutative} in the sense that it does not matter in which order we apply the steps.  This happens for the diameters $\CD$ and $\CD_{fs}$. Such commutative walks can be interpreted simply as linear combinations of circuits of the form $\vev^{(2)}-\vev^{(1)}=\sum\limits_{i=1}^k \alpha_i \veg^i$. All other types of circuit walks have to be regarded as \emph{ordered} sequences of vectors. In this way, the distance $\CD$ is just a linear algebra bound of the graph diameter that equals the size of a minimal support of a linear combination of circuits. 

Second, it is important to note that reversing the walk from $\vev^{(1)}$ to $\vev^{(2)}$ (by taking the negatives of circuits) gives a walk from $\vev^{(2)}$ to $\vev^{(1)}$, but this new walk may not necessarily satisfy the same properties. See \cite{bfh-14} for a simple counterexample with respect to $\CD_{fm}$. However, fortunately all of the distance concepts besides $\CD_{fm(b)(r)}$, are \emph{symmetric} in the sense that the reversed walk satisfies the conditions the original walk did and thus the distance from $\vev^{(2)}$ to $\vev^{(1)}$ is the same as the distance from $\vev^{(1)}$ to $\vev^{(2)}$.
Finally, sign-compatible walks may not be obviously natural for the non-expert, but it was shown in \cite{Hemmecke+Onn+Weismantel:oracle} they play a significant role in showing that there is a selection strategy such that only polynomially many circuit greedy-like augmentation steps that respect sign-compatibility are needed to reach an optimal linear programming solution (a fact that is still unresolved for the Simplex method). However, it is still an open problem how to implement this greedy-type augmentation oracle in polynomial time.

\subsection*{Our Contributions}

Our main result is the following

\begin{theorem}\label{thm:hierarchy}
The circuit distances satisfy a hierarchy as depicted in Figure \ref{Fig: awesome circuit hierarchy}. The sign $\geq$, denotes that for any given pair of vertices one type of circuit distance always upper bounds the other. Respectively, $>$ means that one diameter strictly upper bounds the other and that there exists a polyhedron with a pair of vertices for which the two distances strictly differ.
\end{theorem}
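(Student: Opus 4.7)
The plan is to decompose the proof into two tasks corresponding to the two kinds of statements in the theorem: (i) establishing each weak inequality $\CD_X \geq \CD_Y$ appearing in the hierarchy, and (ii) exhibiting, for each arrow marked $>$, a polyhedron together with two of its vertices that realize the strict gap.

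For (i), the central principle is that the circuit distance under a property set $X$ is the minimum length over the set of $X$-walks between two fixed vertices, so if the conjunction of properties defining $X$ implies the one defining $Y$, then every $X$-walk is also a $Y$-walk and the minimum over $Y$-walks can only be smaller; hence $\CD_X \geq \CD_Y$. Most arrows in the hierarchy, for instance $\CD_{efm} \geq \CD_{fm} \geq \CD_f \geq \CD$ and their analogues obtained by appending any subset of $\{r,b,s\}$, are of this kind. Two slightly less automatic reductions still fit the same mould. First, any edge of $P$ is itself a maximal-extension step along a circuit direction, so an edge walk is by definition a feasible maximal circuit walk, giving $\CD_{efm} \geq \CD_{fm}$. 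Second, sign-compatibility of the circuits with $\vev^{(2)}-\vev^{(1)}$ forces each coordinate of $B\vey^{(i)}$ to move monotonically from $(B\vev^{(1)})_k$ toward $(B\vev^{(2)})_k$, so every partial sum satisfies $B\vey^{(i)} \leq \ved$; thus any sign-compatible walk is automatically feasible, which collapses feasibility on top of sign-compatibility for free and accounts for the corresponding arrows in the hierarchy.

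For (ii), I would prove each strict inequality by constructing a small polyhedron $P$ and two vertices $\vev^{(1)},\vev^{(2)}$ for which the claimed gap is realized, and then propagate these gaps through the hierarchy by the transitivity of $\geq$ combined with $>$. Some of the required witnesses already appear in \cite{bfh-14}: in particular a polytope exhibiting $\CD_{efm} > \CD_{fm}$ (a shortcut through the interior along a non-edge circuit), and the asymmetry example that underlies $\CD_{fmb}>\CD_{fm}$ and $\CD_{fmr}>\CD_{fm}$. For the other strict arrows I would design low-dimensional examples tailored to each distinction: an elongated polytope with a single long circuit through its interior, so that forbidding a maximal extension enables a shorter feasible walk (separating $\CD_{fm}$ from $\CD_f$); a polytope admitting a two-step walk via a point outside $P$ but requiring more steps when confined to $P$ (separating $\CD_f$ from $\CD$); and analogous constructions in which the cheapest unrestricted walk necessarily revisits a circuit or uses a circuit together with its negative (separating the $(r)$ and $(b)$ variants). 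Whenever adding sign-compatibility is claimed to strictly increase the distance, I would enlarge such an example by a coordinate that forces one of the efficient circuits to swap sign on some constraint.

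The main obstacle is the combinatorial case-analysis required for (ii): there are many distinct distances in the hierarchy, and for each arrow marked $>$ one must exhibit a specific polyhedron, enumerate its circuit set explicitly, and verify that the two distances in question actually take the claimed values in both the restricted and the unrestricted regime. The construction must, in general, live in dimension at least three, since as the paper later shows the hierarchy collapses in dimension two. Once each elementary strict separation is in place, the transitivity of $\geq$ with $>$ discussed above propagates the $>$ labels through the remaining arrows of Figure~\ref{Fig: awesome circuit hierarchy} without further work.
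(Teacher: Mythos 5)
Your high-level decomposition into (i) weak inequalities obtained by containment of walk classes and (ii) strict separations via explicit polyhedral witnesses is exactly the skeleton of the paper's argument, and your identification of sign-compatible walks being automatically feasible is a correct first step. However, there are several genuine gaps and one outright error.

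The most significant gap concerns the arrow $\CD_{fs} \geq \CD_{fbr}$. Your monotonicity argument only yields that sign-compatible walks are feasible, i.e.\ $\CD_s = \CD_{fs}$. But the hierarchy needs $\CD_{fs}$ to dominate $\CD_{fbr}$, which additionally requires the \emph{non-repetitive} and \emph{non-backwards} properties. The paper's Lemma \ref{lem:smeansbr} gets this from a reordering argument you did not supply: because sign-compatible steps commute and their partial sums remain feasible in any order, one can gather all uses of a given circuit $\pm\veg^i$ consecutively and merge them into a single step, so an \emph{optimal} sign-compatible walk can never repeat a circuit nor use both $\veg^i$ and $-\veg^i$. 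Without this, the weak inequality into the $fbr$ class does not follow.

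You also omit the well-definedness issues that shape the hierarchy diagram itself. The paper must explain why $\CD_{fms}$ and $\CD_{efms}$ do not appear: there are polytopes with a pair of vertices admitting no feasible \emph{maximal} sign-compatible walk at all (the paper's first lemma gives a two-dimensional example). Conversely, one must show that $\CD_{fs}$ is always finite, which the paper does via a Carathéodory-type bound (Lemma \ref{lem:caratheodory}), yielding the quantitative Theorem \ref{thm:numberofsteps} as a byproduct. Your proposal never addresses either direction.

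Finally, two smaller inaccuracies. Your claim that the witness constructions ``must, in general, live in dimension at least three'' is false: most of the separations inside the feasible-maximal layers ($\CD_{efm(b)(r)} > \CD_{fm(b)(r)}$, $\CD_{fm(b)(r)} > \CD_{f(b)(r)}$, $\CD_{efmb} > \CD_{efm}$, $\CD_{fmb} > \CD_{fm}$, $\CD_{fmr} > \CD_{fm}$) are realized by polygons in dimension two; this is precisely why the two-dimensional hierarchy in Section \ref{sec:twodim} does not collapse completely. And the phrase ``the asymmetry example that underlies $\CD_{fmb}>\CD_{fm}$ and $\CD_{fmr}>\CD_{fm}$'' suggests one construction suffices for both; in fact the paper needs two rather delicate and distinct polygons (with $11$ and $9$ vertices respectively), plus a separate three-dimensional truncated cube for the repetitive edge-walk case $\CD_{efmr}>\CD_{efm}$, and a four-dimensional polytope for $\CD_{fr}>\CD_f$ (which provably cannot exist in dimension $\leq 3$). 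The $\CD_{fb} > \CD_f$ separation is in fact still open and the paper only proves the weak inequality there, noting such a witness would need dimension at least five; your proposal treats it as if it were proved strict.
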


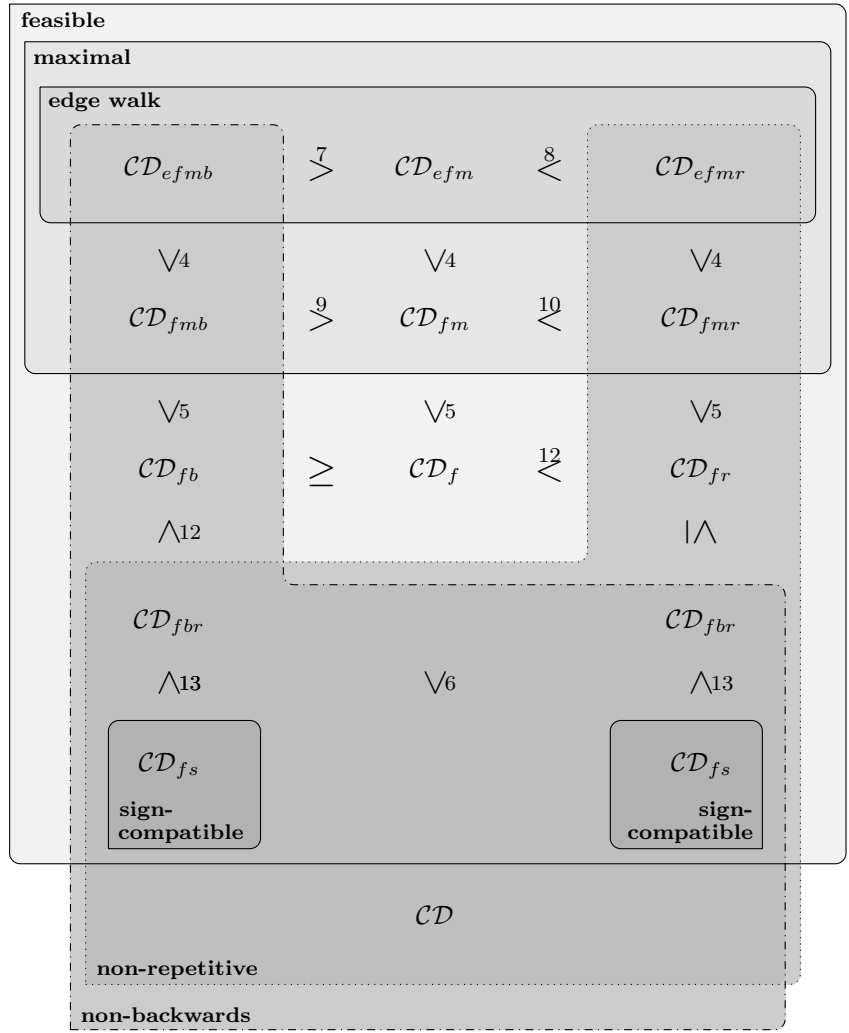
\begin{figure}
		\centering
			\begin{tikzpicture}[scale=1]
					\draw[gray_f, fill= gray_f,rounded corners] (-5.6,2.2)--(5.4,2.2)--(5.4,-9.2)--(-5.6,-9.2)--(-5.6,2.2);
					\draw[gray_m, fill= gray_m,rounded corners] (-5.4,1.7)--(5.2,1.7)--(5.2,-2.7)--(-5.4,-2.7)--(-5.4,1.7);
					\draw[gray_e, fill= gray_e,rounded corners] (-5.2,1.1)--(5,1.1)--(5,-0.7)--(-5.2,-0.7)--(-5.2,1.1);
					\draw[gray_b, fill= gray_b, rounded corners,rounded corners] (-4.8,-11.4)--(-4.8,0.6) --(-2,0.6)--(-2,-5.5)--(4.6,-5.5)--(4.6,-11.4)--(-4.8,-11.4);
					\draw[gray_r, fill= gray_r,rounded corners] (-4.6,-10.8)--(4.8,-10.8)--(4.8,0.6)--(2,0.6)--(2,-5.2)--(-4.6,-5.2)--(-4.6,-10.8);
                                               \draw[gray_br, fill= gray_br,rounded corners] (-4.6,-10.8)--(4.6,-10.8)--(4.6,-5.5)--(-2,-5.5)--(-2,-5.2)--(-4.6,-5.2)--(-4.6,-10.8);
					\draw[gray_s, fill= gray_s,rounded corners](-4.3,-9)--(-4.3,-7.3)--(-2.3,-7.3)--(-2.3,-9)--(-4.3,-9); 
					\draw[gray_s, fill= gray_s,rounded corners] (4.3,-9)--(4.3,-7.3)--(2.3,-7.3)--(2.3,-9)--(4.3,-9);

					\draw[rounded corners] (-5.6,2.2)--(5.4,2.2)--(5.4,-9.2)--(-5.6,-9.2)--(-5.6,2.2);
					\draw[rounded corners] (-5.4,1.7)--(5.2,1.7)--(5.2,-2.7)--(-5.4,-2.7)--(-5.4,1.7);
					\draw[rounded corners] (-5.2,1.1)--(5,1.1)--(5,-0.7)--(-5.2,-0.7)--(-5.2,1.1);
					\draw[dashdotted, rounded corners] (-4.8,-11.4)--(-4.8,0.6) --(-2,0.6)--(-2,-5.5)--(4.6,-5.5)--(4.6,-11.4)--(-4.8,-11.4);
					\draw[dotted, rounded corners] (-4.6,-10.8)--(4.8,-10.8)--(4.8,0.6)--(2,0.6)--(2,-5.2)--(-4.6,-5.2)--(-4.6,-10.8);
					\draw[rounded corners] (-4.3,-9)--(-4.3,-7.3)--(-2.3,-7.3)--(-2.3,-9)--(-4.3,-9); 
					\draw[rounded corners] (4.3,-9)--(4.3,-7.3)--(2.3,-7.3)--(2.3,-9)--(4.3,-9);

					\node at (-4.9,2) {\footnotesize\textbf{feasible}};
					\node at (-4.65,1.5) {\footnotesize\textbf{maximal}};
					\node at (-4.35,0.9) {\footnotesize\textbf{edge walk}};
					\node at (-3.4,-10.6) { \footnotesize  \textbf{non-repetitive}};
					\node at (-3.55,-11.2) { \footnotesize  \textbf{non-backwards}};
					\node at (-3.8,-8.5) { \footnotesize  \textbf{sign-}};
					\node at (-3.35,-8.8) { \footnotesize  \textbf{compatible}};
					\node at (3.82,-8.5) { \footnotesize  \textbf{sign-}};
					\node at (3.35,-8.8) { \footnotesize  \textbf{compatible}};					
					
					\coordinate (efmb)  node at (-3.5, 0) {$\CD_{efmb}$};
					\node at (-1.5,0) {\Large$>$};
					\node[above] at (-1.5,0) {\footnotesize\ref{Ex: CD0 backwards steps}};
					\coordinate (efm)  node at 	( 0, 0) {$\CD_{efm}$};
					\node at (1.5,0) {\Large$<$};
					\node[above] at (1.5,0) {\footnotesize \ref{Ex: CD0 repeats steps}};
					\coordinate (efmr)  node at ( 3.5, 0) {$\CD_{efmr}$};
					\node[rotate=-90] at (-3.5,-1.2) {\Large $>$};
					\node[rotate=-90] at (0,-1.2) {\Large $>$};
					\node[rotate=-90] at (3.5,-1.2)  {\Large $>$};
					\node[right] at (-3.5,-1.2) {\footnotesize  \ref{ex: edge-directions is stronger}};
					\node[right] at (0,-1.2) {\footnotesize \ref{ex: edge-directions is stronger}};
					\node[right] at (3.5,-1.2) {\footnotesize \ref{ex: edge-directions is stronger}};

					\coordinate (fmb)  node at 	(-3.5,-2) {$\CD_{fmb}$};
					\node at (-1.5,-2) {\Large$>$};
					\node[above] at (-1.5,-2) {\footnotesize \ref{ex: backwards is stronger}};
					\coordinate (fm)  node at 	( 0,-2) {$\CD_{fm}$};
					\node at (1.5,-2) {\Large$<$};
					\node[above] at (1.5,-2) {\footnotesize \ref{ex: repeated is stronger} };
					\coordinate (fmr)  node at 	( 3.5,-2) {$\CD_{fmr}$};
					\node[rotate=-90] at (-3.5,-3.2) {\Large $>$};
					\node[rotate=-90] at (3.5,-3.2)  {\Large $>$};
					\node[rotate=-90] at (0,-3.2)  {\Large $>$};
					\node[right] at (-3.5,-3.2) {\footnotesize \ref{ex: arbitrary length steps is stronger}};
					\node[right] at (3.5,-3.2) {\footnotesize \ref{ex: arbitrary length steps is stronger}};
					\node[right] at (0,-3.2) {\footnotesize \ref{ex: arbitrary length steps is stronger}};

					\coordinate (fb)  node at 	(-3.5,-4) {$\CD_{fb}$};
					\node at (-1.5,-4) {\Large$\geq$};
					\coordinate (f)  node at 		( 0,-4) {$\CD_{f}$};
					\node[rotate=-90] at (0,-6.8) {\Large $>$};
					\node at (1.5,-4) {\Large$<$};
					\node[right] at (0,-6.8) {\footnotesize \ref{Ex: infeasibility is stronger}};
					\node[above] at (1.5,-4) {\footnotesize \ref{ex: with repeated is stronger}};
					\coordinate (fr)  node at 	( 3.5,-4) {$\CD_{fr}$};

					\node[rotate=-90] at (-3.5,-4.8){\Large $<$};
					\node[right] at (-3.5,-4.8) {\footnotesize  \ref{ex: with repeated is stronger}};
					\node[rotate=-90] at (3.5,-4.8) {\Large $\leq$};
				
					\coordinate (fbr)  node at 	( -3.5,-6) {$\CD_{fbr}$};
					\node[rotate=-90] at (-3.5,-6.8) {\Large $<$};
					\node[right] at (-3.5,-6.8) {\footnotesize   \ref{Ex: sign pattern compatible is stronger}};
					\coordinate (fbr)  node at 	( 3.5,-6) {$\CD_{fbr}$};
					\node[rotate=-90] at (3.5,-6.8) {\Large $<$};
					\node[right] at (-3.5,-6.8) {\footnotesize   \ref{Ex: sign pattern compatible is stronger}};
					\coordinate (fs)  node at 	(-3.5,-7.9) {$\CD_{fs}$};
					\node[right] at (3.5,-6.8) {\footnotesize   \ref{Ex: sign pattern compatible is stronger}};
					\coordinate (fs)  node at 	(3.5,-7.9) {$\CD_{fs}$};

					\coordinate (no)  node at 	( 0,-9.9) {$\CD_{}$};
			\end{tikzpicture}
\caption{A hierarchy of circuit distances.}\label{Fig: awesome circuit hierarchy}
\end{figure}

Section \ref{sec: proofs} first presents some general properties and observations on our distances. We explain why the hierarchy contains precisely the depicted notions of circuit distances and why they satisfy the respective ``weak inequalities''. One key result of this discussion is

\begin{theorem}\label{thm:numberofsteps}
Let $P=\{\,\vez\in \R^n:A\vez=\veb,\,B\vez\leq \ved\,\}$ with $A\in\Z^{m_A\times n}$, $B\in\Z^{m_B\times n}$. be a polyhedron in $\mathbb{R}^n$. For all pairs of vertices of $P$ the distances $\CD_f$, $\CD_{fb}$, $\CD_{fr}$, $\CD_{fbr}$, and $\CD$ are bounded above by the distance $\CD_{fs}$. Moreover, all these distances are smaller or equal to 
$\min\{n-\rank(A), \rank(A)-n+m_B \}$.
\end{theorem}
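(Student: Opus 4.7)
My plan is to split the theorem into two halves: (i) proving that $\CD_{fs}$ dominates each of $\CD_f,\CD_{fb},\CD_{fr},\CD_{fbr},\CD$, and (ii) proving the numerical bound. For (i) the strategy is to show that a sign-compatible walk automatically fulfills every other structural requirement. Given a decomposition $\vev^{(2)}-\vev^{(1)}=\sum_{i=1}^k\alpha_i\veg^i$ with $\alpha_i>0$ in which all $\alpha_i\veg^i$ are pairwise sign-compatible in $B$-coordinates, any ordering of these steps yields a feasible walk: in every coordinate $j$, the partial sums $(B\vey^{(i)})_j$ are monotone, since all increments $\alpha_\ell(B\veg^\ell)_j$ share a common sign, so they remain sandwiched between $(B\vev^{(1)})_j$ and $(B\vev^{(2)})_j$, both of which are at most $d_j$. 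No pair $\veg,-\veg$ can appear, since the sign-compatibility condition $(B\veg)_j(B(-\veg))_j\geq 0$ forces $(B\veg)_j=0$ for every $j$, hence $B\veg=0$, and since $B$ has full row rank $n$ the restriction $B:\ker(A)\to L:=B\ker(A)$ is injective, giving $\veg=0$. Repeated copies of the same circuit can be merged into a single step with the sum of coefficients, so we may also assume non-repetitiveness. Combined these observations give $\CD_{fs}\geq \CD_f,\CD_{fb},\CD_{fr},\CD_{fbr},\CD$.

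For (ii) I would produce a sign-compatible decomposition of $\vev^{(2)}-\vev^{(1)}$ achieving each of the two halves of the minimum separately. The first bound comes from a direct Carath\'eodory argument inside $L=B\ker(A)$, whose dimension is $n-\rank(A)$ thanks to the injectivity of $B|_{\ker(A)}$. The pointed cone
\[
K=\left\{\vew\in L:(B\vev^{(2)}-B\vev^{(1)})_j\cdot\vew_j\geq 0\text{ for all }j=1,\dots,m_B\right\}
\]
contains $B(\vev^{(2)}-\vev^{(1)})$, and its extreme rays are precisely the $B$-images of the circuits of $P$ that are sign-compatible with $\vev^{(2)}-\vev^{(1)}$. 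Carath\'eodory expresses $B(\vev^{(2)}-\vev^{(1)})$ as a non-negative combination of at most $\dim K\leq\dim L=n-\rank(A)$ extreme rays, and pulling back through $B|_{\ker(A)}^{-1}$ produces a sign-compatible walk of length at most $n-\rank(A)$.

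The second half of the bound, $\dim L^\perp=m_B-n+\rank(A)$, is the main obstacle: it cannot be obtained from the same Carath\'eodory applied in $L$, because in general a conformal decomposition of a vector in $L$ can require more than $\dim L^\perp$ elementary vectors. The plan is to dualize, working with $L^\perp\subseteq\R^{m_B}$ of dimension $m_B-n+\rank(A)$, and exploit the oriented-matroid duality between elementary vectors of $L$ and those of $L^\perp$. The essential additional input, which makes the bound specific to differences of \emph{vertices}, is that at each of $\vev^{(1)}$ and $\vev^{(2)}$ exactly $n-\rank(A)$ linearly independent inequalities of $B\vez\leq \ved$ are tight; this tightness structure restricts the support of $B(\vev^{(2)}-\vev^{(1)})$ and of any circuit participating in the decomposition, and feeding it into the dual Carath\'eodory in $L^\perp$ yields the complementary bound $m_B-n+\rank(A)$. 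Combining the two halves gives the claimed $\min\{n-\rank(A),\rank(A)-n+m_B\}$.
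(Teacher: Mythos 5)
Your part (i) and the first half of the numerical bound are correct and match the paper's own route. Lemma~\ref{lem:smeansbr} in the paper establishes exactly what you argue: an optimal sign-compatible walk can be reordered to be feasible (your monotone-partial-sums argument is the same observation), cannot use $\pm\veg$ (your injectivity of $B|_{\ker(A)}$ argument works because $B$ has rank $n$), and can be made non-repetitive by merging. For the bound $n-\rank(A)$, your Carath\'eodory argument in the cone $K$ of sign-compatible directions inside $L=B\ker(A)$ is precisely the paper's Lemma~\ref{lem:caratheodory} specialised to the case $m_B\geq 2(n-\rank(A))$, where $\dim K\leq\dim L=n-\rank(A)$.

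The gap is in the second half of the bound, $\rank(A)-n+m_B$. You correctly diagnose that the vertex structure is the needed extra input, and you are right that conformal Carath\'eodory in $L$ alone cannot give a bound of $\dim L^\perp$. But the proposed fix --- dualising to $L^\perp$ and invoking ``the dual Carath\'eodory'' --- is not worked out and it is unclear it could be: the vector you need to decompose, $B(\vev^{(2)}-\vev^{(1)})$, lives in $L$, not $L^\perp$, so there is no object in $L^\perp$ to which Carath\'eodory would be applied, and oriented-matroid duality by itself does not convert a conformal decomposition in $L$ into one bounded by $\dim L^\perp$. The paper gets the second bound without any duality: since at each of the two vertices at least $n-\rank(A)$ linearly independent rows of $B\vez\leq\ved$ are tight, at least $2(n-\rank(A))-m_B$ rows are tight at \emph{both} vertices when this quantity is positive, so the corresponding components of $B(\vev^{(2)}-\vev^{(1)})$ are zero. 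This forces the sign-compatible cone (your $K$, the paper's $C_\sim\cap\ker(A)$) into a subspace of dimension at most $\rank(A)-n+m_B$, and then the \emph{same} Carath\'eodory argument you already use for the first bound, applied in this smaller cone, produces the complementary bound. In short: keep the Carath\'eodory-in-$K$ strategy, but use the vertex tightness to shrink $\dim K$ directly rather than attempting to pass to the dual.
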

 
We then perform the core part of the proof of Theorem \ref{thm:hierarchy}: We exhibit polytopes with pairs of vertices $v^{(1)}$, $v^{(2)}$ for which the length of optimal walks with the respective properties differ. We prove that almost all circuit distances in the hierarchy are indeed distinct and thus viable. Observe that the results on the circuit distances transfer to statements about the diameters of polyhedra being different too. 

 In Section \ref{sec:twodim}, we discuss the hierarchy for dimension $n=2$, denoting the circuit distances by $\CD^2_{~}$. We show that many different distance notions collapse into only a few distinct distances. The resulting hierarchy is depicted in Figure \ref{Fig: awesome circuit hierarchy dim 2} and proved in the following theorem, together with the possible distances of vertices in (two-dimensional) polygons.
 
 \begin{theorem}\label{Thm: circuit distances in dim 2} 
For $n=2$ the circuit hierarchy collapses as depicted in Figure \ref{Fig: awesome circuit hierarchy dim 2}.  

More precisely, for a polygon on $k$ vertices we obtain 
\begin{align*}
	\CD^2_{efmb}&\in \left\{1,\ldots,k-3\right\}\; (k\geq 5)\\
	\CD^2_{efm}=\CD^2_{efmr}&\in \left\{1,\ldots, \left\lfloor \frac{k}{2}\right\rfloor\right\}\\
	\CD^2_{fm(b)(r)}&\in \left\{1,\ldots, \left\lfloor \frac{k}{2}\right\rfloor\right\}\\
	\CD^2_{f}=\CD^2_{fr}=\CD^2_{fb}=\CD^2_{fbr}=\CD^2_{fs}=\CD^2_{}&\in \left\{1,2\right\}
\end{align*}
 Further there are polygons with pairs of vertices that attain the maximal distances in the ranges claimed above.
\end{theorem}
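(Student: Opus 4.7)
The plan is threefold: establish each upper bound, verify the collapses pictured in Figure~\ref{Fig: awesome circuit hierarchy dim 2}, and construct polygons realizing each stated maximum.

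For the bottom row I apply Theorem~\ref{thm:numberofsteps} with $n=2$, $\rank(A)=0$, and $m_B=k$, which gives $\CD_{fs}\le\min(2,k-2)\le 2$; by the hierarchy every distance weaker than $\CD_{fs}$ is also at most $2$. Each of $\CD,\CD_f,\CD_{fr},\CD_{fb},\CD_{fbr},\CD_{fs}$ equals $1$ if and only if $\vev^{(2)}-\vev^{(1)}$ is a positive multiple of a circuit (an edge direction of $P$): the single-step walk exists without further constraints, and convexity of the polygon makes the segment $[\vev^{(1)},\vev^{(2)}]$ lie in $P$ automatically, so every stronger single-step condition holds. Hence the bottom row collapses to a common value in $\{1,2\}$.

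The collapse $\CD^2_{efm}=\CD^2_{efmr}$ is immediate: the graph of a $k$-gon is the cycle $C_k$, and every shortest walk in $C_k$ is a simple path, so automatically non-repetitive; the cycle's diameter $\lfloor k/2\rfloor$ gives the upper bound. For $\CD^2_{efmb}\le k-3$ with $k\ge 5$ I use an angular argument. Walking counterclockwise, the edge direction angles $\theta_0<\cdots<\theta_{k-1}$ (mod $2\pi$) step by the positive exterior angles, which sum to $2\pi$. A direct path is non-backwards exactly when its angular span is strictly less than $\pi$. The two direct paths between $\vev^{(1)}$ and $\vev^{(2)}$ use complementary arcs of directions whose spans miss the strictly positive exterior angles at $\vev^{(1)}$ and $\vev^{(2)}$, hence sum to strictly less than $2\pi$; so at least one path has span below $\pi$ and is non-backwards. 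If the shorter path (length $d\le\lfloor k/2\rfloor$) is non-backwards, then $\CD^2_{efmb}\le d\le\lfloor k/2\rfloor\le k-3$ for $k\ge 5$. Otherwise the longer path of length $k-d$ is non-backwards, and since two consecutive edges cannot be anti-parallel in a non-degenerate convex polygon, an anti-parallel pair on the shorter arc forces $d\ge 3$, yielding $\CD^2_{efmb}\le k-d\le k-3$.

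For the third row I use $\CD^2_{fm}\le\CD^2_{efm}\le\lfloor k/2\rfloor$, since every edge walk is feasible and maximal. The main obstacle is the collapse $\CD^2_{fmbr}\le\CD^2_{fm}$: any optimal $\CD_{fm}$-walk can be modified to one of the same or shorter length that is non-repetitive and non-backwards. I would argue by an exchange-and-merge procedure — if a walk uses a circuit $\veg$ twice (or uses $\veg$ and $-\veg$), the planar ordering of circuits and convexity of $P$ permit reordering the intervening steps until the two related steps are adjacent, at which point maximality forces them to merge into a single step, strictly shortening the walk and contradicting optimality. Verifying that the reordering preserves feasibility and maximality at each step is the delicate technical piece; it relies on the fact that in two dimensions a maximal step from any feasible point along $\veg$ terminates on a uniquely determined boundary arc.

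For the realizations I would exhibit: antipodal vertices of a generic convex $k$-gon to attain $\CD^2_{efm}=\CD^2_{efmr}=\lfloor k/2\rfloor$; a convex $k$-gon with $k\ge 5$ whose short arc of three edges between $\vev^{(1)}$ and $\vev^{(2)}$ spans angular range just above $\pi$ (producing an anti-parallel pair on the short arc) while the remaining $k-3$ edges are confined to an angular range below $\pi$, giving $\CD^2_{efmb}=k-3$; a small generic perturbation of the regular $k$-gon avoiding all accidental circuit alignments between its antipodal vertices to realize $\CD^2_{fm(b)(r)}=\lfloor k/2\rfloor$; and two non-adjacent vertices of a regular pentagon whose connecting segment is not parallel to any edge to realize the value $2$ throughout the bottom row. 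Intermediate values are obtained by choosing vertex pairs at appropriate combinatorial distance along the polygon.
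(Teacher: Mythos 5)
Your proposal diverges from the paper in one useful way and breaks in two places.

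The genuine contribution is your angular argument for $\CD^2_{efmb}\le k-3$: the paper's proof of Theorem~\ref{Thm: circuit distances in dim 2} addresses only attainment of $k-3$ (via the construction of Lemma~\ref{Ex: CD0 backwards steps}), and the upper bound is left implicit. Your span argument supplies it, though be careful with the stated equivalence ``non-backwards exactly when angular span is less than $\pi$'' --- only the implication ``span $<\pi$ implies non-backwards'' is true, which is fortunately the only direction you use. The case analysis (shorter path non-backwards with $d\le\lfloor k/2\rfloor\le k-3$; else $d\ge3$ and longer path non-backwards) is sound once one also checks that at most one of the two edge paths can contain an anti-parallel pair, which follows from your span bound.

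The serious error is the claimed ``collapse $\CD^2_{fmbr}\le\CD^2_{fm}$'' via an exchange-and-merge procedure. This is false, and the paper disproves it: Lemma~\ref{ex: backwards is stronger} exhibits a two-dimensional polygon with $\CD^2_{fmb}>\CD^2_{fm}$, and Lemma~\ref{ex: repeated is stronger} does the same for $\CD^2_{fmr}>\CD^2_{fm}$. The dimension-two hierarchy in Figure~\ref{Fig: awesome circuit hierarchy dim 2} explicitly records these as strict inequalities. The theorem asserts only that all four distances $\CD^2_{fm}$, $\CD^2_{fmb}$, $\CD^2_{fmr}$, $\CD^2_{fmbr}$ lie in the range $\{1,\dots,\lfloor k/2\rfloor\}$, not that they coincide. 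Your merge step fails precisely because combining a $\veg$ and a $-\veg$ step (or two $\veg$ steps) into one generically produces a step that is no longer \emph{maximal}; such a merge works for $\CD_f$ (cf.\ Lemma~\ref{ex: 5dim}) but not under constraint~\textbf{(m)}. You would instead need to bound $\CD^2_{fmbr}$ directly, e.g.\ via $\CD^2_{fmr}\le\CD^2_{efmr}=\CD^2_{efm}\le\lfloor k/2\rfloor$ and a separate argument for the backwards constraint.

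The second error is the realization $\CD^2_{fm(b)(r)}=\lfloor k/2\rfloor$ by ``a small generic perturbation of the regular $k$-gon.'' Lemma~\ref{lem: distances differ in dim 2} shows the regular $k$-gon has $\CD^2_{fm(b)(r)}\in\{1,2\}$, so you must jump from $2$ to $\lfloor k/2\rfloor$; a generic perturbation only removes accidental circuit alignments and plausibly raises the distance to $3$ or so, but you give no argument that it reaches $\lfloor k/2\rfloor$. The paper's Lemma~\ref{thm: linear CD_fm} constructs the polygon iteratively, pinning down each vertex pair so that every length-$(\lfloor k/2\rfloor-1)$ feasible maximal walk from $\vev^{(0)}$ is blocked; this careful non-generic placement is essential and cannot be replaced by genericity.
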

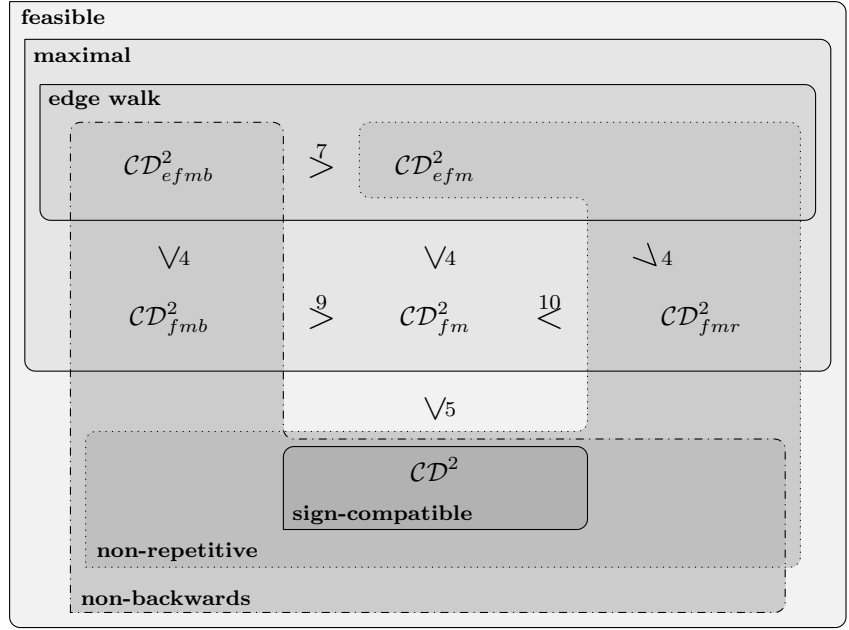
\begin{figure}[H]
		\centering
			\begin{tikzpicture}[scale=1]
					\draw[gray_f, fill= gray_f,rounded corners] (-5.6,2.2)--(5.4,2.2)--(5.4,-6.1)--(-5.6,-6.1)--(-5.6,2.2);
					\draw[gray_m, fill= gray_m,rounded corners] (-5.4,1.7)--(5.2,1.7)--(5.2,-2.7)--(-5.4,-2.7)--(-5.4,1.7);
					\draw[gray_e, fill= gray_e,rounded corners] (-5.2,1.1)--(5,1.1)--(5,-0.7)--(-5.2,-0.7)--(-5.2,1.1);
					\draw[gray_b, fill= gray_b, rounded corners,rounded corners] (-4.8,-5.9)--(-4.8,0.6) --(-2,0.6)--(-2,-3.6)--(4.6,-3.6)--(4.6,-5.9)--(-4.8,-5.9);
					\draw[gray_r, fill= gray_r,rounded corners] (-4.6,-5.3)--(4.8,-5.3)--(4.8,0.6)--(-1,0.6)--(-1,-0.4)--(2,-0.4)--(2,-3.5)--(-4.6,-3.5)--(-4.6,-5.3);
\draw[gray_br, fill= gray_br,rounded corners] (-4.6,-5.3)--(4.6,-5.3)--(4.6,-3.6)--(-2,-3.6)--(-2,-3.5)--(-4.6,-3.5)--(-4.6,-5.3);
					\draw[gray_s, fill= gray_s,rounded corners](-2,-4.8)--(-2,-3.7)--(2,-3.7)--(2,-4.8)--(-2,-4.8);

					\draw[rounded corners] (-5.6,2.2)--(5.4,2.2)--(5.4,-6.1)--(-5.6,-6.1)--(-5.6,2.2);
					\draw[rounded corners] (-5.4,1.7)--(5.2,1.7)--(5.2,-2.7)--(-5.4,-2.7)--(-5.4,1.7);
					\draw[rounded corners] (-5.2,1.1)--(5,1.1)--(5,-0.7)--(-5.2,-0.7)--(-5.2,1.1);
					\draw[dashdotted, rounded corners] (-4.8,-5.9)--(-4.8,0.6) --(-2,0.6)--(-2,-3.6)--(4.6,-3.6)--(4.6,-5.9)--(-4.8,-5.9);
					
					\draw[dotted, rounded corners] (-4.6,-5.3)--(4.8,-5.3)--(4.8,0.6)--(-1,0.6)--(-1,-0.4)--(2,-0.4)--(2,-3.5)--(-4.6,-3.5)--(-4.6,-5.3);
					\draw[rounded corners] (-2,-4.8)--(-2,-3.7)--(2,-3.7)--(2,-4.8)--(-2,-4.8); 

					\node at (-4.9,2) {\footnotesize\textbf{feasible}};
					\node at (-4.65,1.5) {\footnotesize\textbf{maximal}};
					\node at (-4.35,0.9) {\footnotesize\textbf{edge walk}};
					\node at (-3.4,-5.1) { \footnotesize  \textbf{non-repetitive}};
					\node at (-3.55,-5.7) { \footnotesize  \textbf{non-backwards}};
					\node at (-0.7,-4.6) { \footnotesize  \textbf{sign-compatible}};
					
					\coordinate (efmb)  node at (-3.5, 0) {$\CD^2_{efmb}$};
					\node at (-1.5,0) {\Large$>$};
					\node[above] at (-1.5,0) {\footnotesize\ref{Ex: CD0 backwards steps}};
					\coordinate (efm)  node at 	( 0, 0) {$\CD^2_{efm}$};
					\node[rotate=-90] at (-3.5,-1.2) {\Large $>$};
					\node[rotate=-90] at (0,-1.2) {\Large $>$};
					\node[rotate=-45] at (2.8,-1.2)  {\Large $>$};
					\node[right] at (-3.5,-1.2) {\footnotesize  \ref{ex: edge-directions is stronger}};
					\node[right] at (0,-1.2) {\footnotesize \ref{ex: edge-directions is stronger}};
					\node[right] at (2.85,-1.2) {\footnotesize \ref{ex: edge-directions is stronger}};

					\coordinate (fmb)  node at 	(-3.5,-2) {$\CD^2_{fmb}$};
					\node at (-1.5,-2) {\Large$>$};
					\node[above] at (-1.5,-2) {\footnotesize \ref{ex: backwards is stronger}};
					\coordinate (fm)  node at 	( 0,-2) {$\CD^2_{fm}$};
					\node at (1.5,-2) {\Large$<$};
					\node[above] at (1.5,-2) {\footnotesize \ref{ex: repeated is stronger} };
					\coordinate (fmr)  node at 	( 3.5,-2) {$\CD^2_{fmr}$};
					\node[rotate=-90] at (0,-3.2)  {\Large $>$};
					\node[right] at (0,-3.2) {\footnotesize \ref{ex: arbitrary length steps is stronger}};
					\coordinate (f)  node at 		( 0,-4)
					 {$\CD^2$};
			\end{tikzpicture}
\caption{The hierarchy of circuit distances in dimension two.}\label{Fig: awesome circuit hierarchy dim 2}
\end{figure}

\section{Proof of Theorem \ref{thm:hierarchy}}\label{sec: proofs}

Before we start with the technical details of the proof of Theorem \ref{thm:hierarchy}, there are a few comments to make. Figure \ref{Fig: awesome circuit hierarchy} depicts a total of $12$ different notions of circuit distance. For sake of having a clear layout the lower left and lower right parts refer to the same classes. 

The very first horizontal layer of the table contains edge walks, which we group by a small surrounding box. An edge walk always is both feasible and maximal, so there are only combinations that contain all of these properties at the same time. We distinguish between $\CD_{efm}$ and $\CD_{efmb}, \CD_{efmr}$. By imposing an additional constraint we directly have $\CD_{efmb}\geq \CD_{efm}$ and $\CD_{efmr} \geq \CD_{efm}$, but we will show the stronger statements $\CD_{efmb}>\CD_{efm}$ and $\CD_{efmr} > \CD_{efm}$. The corresponding proofs are in Lemmas \ref{Ex: CD0 backwards steps} and \ref{Ex: CD0 repeats steps}. We generally indicate the numbers of the associated lemmas at the inequality symbols.

There is no $\CD_{efms}$, as such an edge walk is not necessarily well-defined in the sense that there is not always a sign-compatible edge walk from one vertex to another. In fact, this even holds for all feasible maximal circuit walks, which are listed in the second layer. 

\begin{lemma}
For $n=2$, there is a polytope with a pair of vertices such that there is no feasible maximal sign-compatible circuit walk from one vertex to the other one. In particular there is no feasible maximal sign-compatible edge walk. 
\end{lemma}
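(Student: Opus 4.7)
The plan is to exhibit an explicit convex pentagon $P\subset\R^2$ together with a choice of two vertices $\vev^{(1)},\vev^{(2)}$ of $P$ for which every feasible maximal sign-compatible circuit walk starting at $\vev^{(1)}$ gets trapped at a boundary point before reaching $\vev^{(2)}$.

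I would first use the fact that in dimension two with only inequality constraints the circuits associated to $B$ are exactly the edge directions of the polygon, normalized. For a pentagon this gives at most five geometric directions, and the sign pattern of $B(\vev^{(2)}-\vev^{(1)})$ typically leaves only a small subset of them sign-compatible. The key is to design a pentagon so that (i) exactly two circuits $g_1,g_2$ are sign-compatible with $\vev^{(2)}-\vev^{(1)}$, (ii) both $g_1$ and $g_2$ are feasible from $\vev^{(1)}$, and (iii) the maximal step in each $g_i$ from $\vev^{(1)}$ lands on an edge of $P$ whose outward normal has strictly positive inner product with both $g_1$ and $g_2$. Condition (iii) forces the maximum feasible extension in each $g_i$ from either landing point to be zero, so that the walk cannot be extended.

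A concrete instance is the pentagon with vertices $v_1=(0,0)$, $v_2=(3,-1)$, $v_3=(5,1)$, $v_4=(1,3)$, $v_5=(-2,1)$, taking $\vev^{(1)}=v_1$ and $\vev^{(2)}=v_3$. A direct calculation shows $B(v_3-v_1)=(-8,4,7,-7,-7)$ and that among the ten signed edge directions only $(3,-1)$ and $(3,2)$ are sign-compatible with this vector. The maximal step from $v_1$ in direction $(3,-1)$ runs along the edge $\overline{v_1v_2}$ and ends at $v_2$, while the maximal step in direction $(3,2)$ ends at the point $(3,2)$ lying in the relative interior of $\overline{v_3v_4}$. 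At $v_2$ the outward normal $(1,-1)$ of the edge $\overline{v_2v_3}$ satisfies $(1,-1)\cdot(3,-1)=4>0$ and $(1,-1)\cdot(3,2)=1>0$; at $(3,2)$ the outward normal $(1,2)$ of $\overline{v_3v_4}$ satisfies $(1,2)\cdot(3,-1)=1>0$ and $(1,2)\cdot(3,2)=7>0$. Hence from either landing point every sign-compatible circuit leaves $P$ immediately and no further maximal step of positive length is feasible, so no walk can reach $v_3$.

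The main obstacle is the geometric design of the pentagon: the sign-compatibility condition must cut the circuit set down to only two directions, and those two directions must simultaneously point strictly outward from the particular edges on which the walk is forced to land. Once the pentagon is fixed, the remainder is a finite verification of sign patterns and halfspace inequalities. The concluding claim about edge walks follows immediately since every edge walk is in particular a circuit walk.
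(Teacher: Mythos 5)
Your proof is correct and follows essentially the same approach as the paper: both exhibit an explicit polygon, compute $B(\vev^{(2)}-\vev^{(1)})$ to cut the circuit set down to exactly two sign-compatible directions, and then verify that the maximal steps from $\vev^{(1)}$ in those directions land at points from which no further sign-compatible feasible step is possible (the paper uses a hexagon, you use a pentagon, but the mechanism is identical). One small slip: your pentagon has only eight signed edge directions, not ten, since edges $\overline{v_3v_4}$ and $\overline{v_5v_1}$ are parallel; this does not affect the validity of the argument.
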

\begin{proof}
Consider the polytope 
	\[
			P=\left\{\ \vex \in \R^2:\ \vel\leq B\vex\leq \veu\ \right\}
	\]
defined by
	\[
		B=\left(\begin{array}{rr}
			1 & 0  \\
			1 & 1  \\
			1 & -1 \\
			1 & -2 
		\end{array}\right),\;
		\vel=\left(\begin{array}{r}
			0 \\ 0\\ -1\\ -3
		\end{array}\right),\;
		\veu=\left(\begin{array}{c}
			\infty \\  6 \\4\\ \infty
		\end{array}\right) \; .
	\]
All possible edge directions $\veg$ of $P$ are given by 
	\[
			\pm \left(\begin{array}{r} 0 \\ 1		\end{array}\right),\;
			\pm \left(\begin{array}{r} 1 \\ -1		\end{array}\right),\;
			\pm \left(\begin{array}{r} 1 \\ 1		\end{array}\right),\;
			\pm \left(\begin{array}{r} 2 \\ 1	\end{array}\right)\; ,
	\]
and the corresponding vectors $B\veg$ are
	\[
			\pm \left(\begin{array}{r} 0 \\ 1 \\ -1 \\ -2		\end{array}\right),\;
			\pm \left(\begin{array}{r} 1 \\ 0 \\ 2 \\ 3		\end{array}\right),\;
			\pm \left(\begin{array}{r} 1 \\ 2 \\ 0 \\ -1\end{array}\right),\;
			\pm \left(\begin{array}{r} 2 \\ 3 \\ 1 \\ 0 \end{array}\right)\;.
	\]
We want to perform circuit walks from $\vev^{(1)}=(2,-2)^T$ to $\vev^{(2)}=(1,2)^T$. We have $B\left(\vev^{(2)}-\vev^{(1)}\right)=(-1,3,-5,-6)^T$. The only sign-compatible circuits are $(0,1)^T$ (as $B(0,1)^T=(0,1,-1,-2)^T$) and $(-1,1)^T$ (as $B(-1,1)^T=(-1,0,2,3)^T$). But choosing direction $(0,1)^T$ as well as choosing $(-1,1)^T$ for a first feasible maximal circuit step at $\vev^{(1)}$ yields points from which we cannot reach $\vev^{(2)}$ with circuits that are sign-compatible with $\vev^{(2)}-\vev^{(1)}$.
	\begin{figure}[H]
		\centering
			\begin{tikzpicture}[scale=1]
					\coordinate (v1) at (0,1);
					\coordinate (v2) at (1,2);
					\coordinate (v3) at (3,3);
					\coordinate (v4) at (5,1);
					\coordinate (v5) at (2,-2);
					\coordinate (v6) at (0,0);
					\draw[black] (v1)--(v2)--(v3)--(v4)--(v5)--(v6)--(v1);					
					\draw [fill, black] (v1) circle (0.05cm);
					\draw [fill, black] (v2) circle (0.05cm);
					\draw [fill, black] (v3) circle (0.05cm);
					\draw [fill, black] (v4) circle (0.05cm);
					\draw [fill, black] (v5) circle (0.05cm);
					\draw [fill, black] (v6) circle (0.05cm);
					\node[below] at (v5) {$\vev^{(1)}$};
					\node[above left] at (v2) {$\vev^{(2)}$};
					\draw[line width= 1.5, red, ->] (v5)--(v6);
					\draw[line width= 1.5, red, ->] (v5)--(2,2.5);
			\end{tikzpicture}
	\caption{All feasible maximal circuit steps at $\vev^{(1)}$ that are sign-compatible with $\vev^{(2)}-\vev^{(1)}$.}	\label{fig:sign}
	\end{figure}
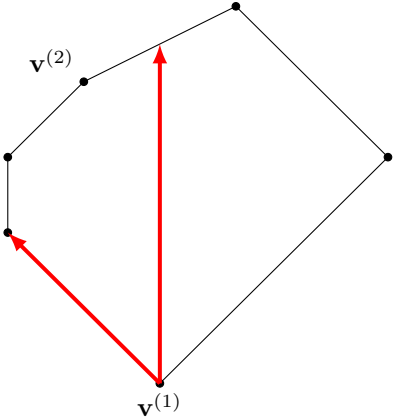
\end{proof}

In contrast, one can show that two vertices of a polyhedron $P=\{\,\vez\in \R^n:A\vez=\veb,\,B\vez\leq \ved\,\}$, are -- in our wording -- connected by a feasible sign-compatible circuit walk of boundable length. So in particular $\CD_{fs}$ is well-defined.

\begin{lemma}\label{lem:caratheodory}
Let $P=\{\,\vez\in \R^n:A\vez=\veb,\,B\vez\leq \ved\,\}$ be a polyhedron in $\mathbb{R}^n$ and let $\vev^{(1)}, \vev^{(2)}$ be two of its vertices. Then there is a feasible sign-compatible circuit walk from $\vev^{(1)}$ to $\vev^{(2)}$ of length at most $\min\{\rank(A)-n+m_B, n-\rank(A)\}$.
\end{lemma}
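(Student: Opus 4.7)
The plan is to derive the walk from a conformal (sign-compatible) Carathéodory decomposition of the difference vector, and then verify feasibility from sign-compatibility alone. First I would observe that $\vex := \vev^{(2)} - \vev^{(1)}$ lies in $\ker(A)$, since $A\vev^{(1)} = A\vev^{(2)} = \veb$. Then I would invoke the classical conformal decomposition theorem (Rockafellar) applied to the elementary vectors of $\ker(A)$ measured through $B$: $\vex = \sum_{i=1}^k \alpha_i \veg^i$ with $\alpha_i > 0$, each $\veg^i$ a circuit, and each $B\veg^i$ conformal to $B\vex$, meaning $(B\veg^i)_\ell (B\vex)_\ell \geq 0$ everywhere and $(B\veg^i)_\ell = 0$ whenever $(B\vex)_\ell = 0$. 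This gives sign-compatibility of each $\veg^i$ with $\vex$, and pairwise sign-compatibility of the $\veg^i$, because wherever $B\vex$ vanishes so does each $B\veg^i$.

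Next I would bound $k$ in two complementary ways. For $k \leq n - \rank(A)$, a standard minimality argument works: if the $\veg^i$ of a given decomposition were linearly dependent, a nontrivial relation $\sum_i \beta_i \veg^i = 0$ (with signs chosen to respect conformality) can be used to eliminate one term while preserving conformality; iterating, a shortest decomposition has linearly independent $\veg^i \in \ker(A)$, and $\dim \ker(A) = n - \rank(A)$. For the complementary bound $k \leq \rank(A) - n + m_B$, I would use the matroid on $\{1,\ldots,m_B\}$ whose circuits are the supports of the images $B\veg$ of the circuit vectors; this matroid has rank $m_B - \dim B(\ker A) = \rank(A) - n + m_B$. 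A greedy construction of the decomposition that picks $\veg^i$ conformal to the current residual and takes $\alpha_i$ maximal (so that $\supp B(\vex - \sum_{j \leq i} \alpha_j \veg^j)$ strictly shrinks each step) can be arranged to consume one unit of matroid rank per step, giving the bound.

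Finally I would arrange the circuits into a walk $\vey^{(0)} = \vev^{(1)}$, $\vey^{(j)} = \vey^{(j-1)} + \alpha_j \veg^j$, ending at $\vey^{(k)} = \vev^{(2)}$. Feasibility at each $\vey^{(j)}$ is checked row by row on $B\vey^{(j)} \leq \ved$: in rows with $(B\vex)_\ell > 0$ all $(B\veg^i)_\ell \geq 0$, so $(B\vey^{(j)})_\ell$ grows monotonically from $(B\vev^{(1)})_\ell$ to $(B\vev^{(2)})_\ell \leq d_\ell$; in rows with $(B\vex)_\ell < 0$ the opposite happens and the sequence stays bounded above by the feasible $(B\vev^{(1)})_\ell$; in rows with $(B\vex)_\ell = 0$ conformality forces each $(B\veg^i)_\ell = 0$ so nothing changes. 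The main obstacle is the second bound $\rank(A) - n + m_B$: the first bound and the feasibility verification are essentially immediate from the conformal decomposition theorem, but the dual matroid bound is not a black-box application and will require a careful greedy/exchange construction that exploits the support structure of $B\vex$ in the matroid $\mathcal{M}$.
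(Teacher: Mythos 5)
Your first bound ($k \leq n - \rank(A)$) and the feasibility verification are fine and essentially standard; that part of the argument is solid. The problem is the second bound $k \leq \rank(A) - n + m_B$. Your greedy/matroid argument never uses the hypothesis that $\vev^{(1)}$ and $\vev^{(2)}$ are \emph{vertices} of $P$, but that hypothesis is essential --- the bound is simply false for non-vertex points. Concrete counterexample: take $A = (1,1,1)$, $\veb = 1$, $B = -I_3$, $\ved = 0$ (the standard 2-simplex). Then $\rank(A) - n + m_B = 1$, but for $\vep_1 = (1,0,0)^\T$ and $\vep_2 = (0,\tfrac12,\tfrac12)^\T$ every conformal decomposition of $\vep_2 - \vep_1$ uses exactly two circuits. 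The matroid $\mathcal{M}$ you describe has rank $1$ here, $\supp B(\vep_2 - \vep_1) = \{1,2,3\}$, and after one maximal conformal step the residual support shrinks to a two-element circuit whose restricted rank is still $1$ --- so the support strictly shrinks, yet the rank does not drop, and a second step is needed. This shows that ``one unit of matroid rank per step'' is not something a greedy conformal decomposition gives you for free; the lemma's bound genuinely depends on the endpoints being vertices.

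The paper's proof makes this dependence explicit. It defines the sign-pattern cone $C_\sim = \{\vex : B\vex \sim 0\}$ where $\sim$ records the signs of $B(\vev^{(2)} - \vev^{(1)})$, and observes that because each $\vev^{(i)}$ is a vertex, at least $n - \rank(A)$ linearly independent rows of $B\vez \leq \ved$ are tight at each one. By inclusion--exclusion at least $2(n-\rank(A)) - m_B$ of these are tight at \emph{both} vertices, so $B(\vev^{(2)}-\vev^{(1)})$ vanishes on that many linearly independent rows. This forces $\dim(C_\sim \cap \ker A) \leq \rank(A) - n + m_B$, and then ordinary Carathéodory for cones (whose extreme rays are circuits, by support-minimality) gives the bound. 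If you want to repair your argument along your chosen lines, you would need to first show, using the tight-constraint structure at the two vertices, that $\supp B(\vev^{(2)} - \vev^{(1)})$ has restricted matroid rank at most $\rank(A) - n + m_B$, and then actually prove that each maximal conformal step drops this restricted rank --- neither of which is a black-box fact, and the second of which fails in general as the example above shows. The paper's dimension-of-cone argument sidesteps this entirely.
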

\begin{proof}
It suffices to consider the case $\text{rank}\left(\binom AB\right) =\text{rank}(A) + \text{rank}(B)$. Otherwise the representation of $P$ has redundant rows in the matrix $B$ and the bound derived below may only become lower.

Let $\sim \; \in \{=,\leq, \geq\}^ {m_B}$ such that its $i$-th component $\sim_i$ is defined as $$\sim_i=\begin{cases}
 		\leq & \text{if } \left(B(\vev^{(2)}-\vev^{(1)}\right)_i<0\\ 
		=& \text{if } \left(B(\vev^{(2)}-\vev^{(1)}\right)_i=0\\ 
 		\geq & \text{if } \left(B(\vev^{(2)}-\vev^{(1)}\right)_i>0 
 \end{cases}.$$
 Then $\vev^{(2)}-\vev^{(1)}\in \left\{\vex\in \R^n: B\vex\sim 0 \right\}=: C_\sim$. This is a polyhedral rational cone in which all elements are pairwise sign-compatible. Observe that $B\vev^{(i)}\leq \ved$ and at least $n-\rank(A)$ linear independent inequalities of this kind are tight. Hence $(B(\vev^{(2)}-\vev^{(1)}))_i=0$ for at least $2(n-\rank(A))-m_B$ (linearly independent) inequalities if  $m_B\leq 2(n-\rank(A))$ (and possibly for none if $m_B\geq 2(n-\rank(A))$). Hence $C_\sim$ has  dimension at most $n-(2(n-\rank(A))-m_B)=2\rank(A)-n+m_B$ if $m_B\leq 2(n-\rank(A))$ (and dimension at most $n$ if $m_B\geq 2(n-\rank(A))$). 

 Let $H_\sim$ be the unique minimal generating set of $C_\sim\cap \ker(A)$ over $\R$, where the components of each vector in $H_\sim$ are scaled to integers with greatest common divisor one. Then all elements in $H_\sim$ are circuits. Note that $\text{dim}(C_\sim\cap \ker(A))= \dim(C_\sim)-\text{rank}(A)$ due to $\text{rank}\left(\binom AB\right) =\text{rank}(A) + \text{rank}(B)$ and hence 
 $\text{dim}(C_\sim\cap \ker(A)) \leq  \rank(A)-n+m_B$ if $m_B\leq 2(n-\rank(A))$  (and $\text{dim}(C_\sim\cap \ker(A)) \leq n-\rank(A)$ if $m_B\geq 2(n-\rank(A))$). By Caratheodory's Theorem $\vev^{(2)}-\vev^{(1)}\in C_\sim\cap \ker(A)$ can thus be written as a combination of at most $\rank(A)-n+m_B$ (respectively $n-\rank(A)$) of the generators contained in $H_\sim$.
\end{proof}

By transitivity of the inequalities in the hierarchy, this upper bound on $\CD_{fs}$ transfers to many of the distances.  This proves Theorem \ref{thm:numberofsteps}.
 
Let us add what the given bound looks like for two widely-used types of polyhedra in whose description the matrix $A$, respectively $B$ does not appear.


\begin{corollary}
Let $P=\{\,\vez\in \R^n:A\vez=\veb,\, \vez \geq \veo \,\}$ be a polyhedron in $\mathbb{R}^n$ and let $\vev^{(1)}, \vev^{(2)}$ be two of its vertices. Then there is a feasible sign-compatible circuit walk from $\vev^{(1)}$ to $\vev^{(2)}$ of length at most $\min\{\rank(A), n-\rank(A)\}$.
\end{corollary}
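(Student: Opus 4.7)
The plan is to derive this corollary directly from Lemma \ref{lem:caratheodory} by rewriting $P$ in the standard form used there. The non-negativity constraints $\vez \geq \veo$ can be restated as $-I\vez \leq \veo$, so we set $B := -I \in \Z^{n\times n}$ and $\ved := \veo$. This puts $P$ into the shape $\{\vez\in\R^n : A\vez = \veb,\, B\vez \leq \ved\}$ required by the previous lemma, and in particular $m_B = n$. Note also that $B=-I$ has full row rank $n$, so the polyhedron has vertices whenever it is nonempty, as is assumed.

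Next, I would simply substitute $m_B = n$ into the bound $\min\{\rank(A)-n+m_B,\, n-\rank(A)\}$ provided by Lemma \ref{lem:caratheodory}. This yields
\[
\min\{\rank(A)-n+n,\, n-\rank(A)\} \;=\; \min\{\rank(A),\, n-\rank(A)\},
\]
which is exactly the claimed bound. The existence of a feasible sign-compatible circuit walk of this length between any two vertices $\vev^{(1)}, \vev^{(2)}$ then follows immediately from the lemma applied to our chosen $A$ and $B$.

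There is essentially no obstacle, as this is a direct substitution. The only subtle point worth noting is that Lemma \ref{lem:caratheodory} assumes without loss of generality that $\rank\binom{A}{B} = \rank(A)+\rank(B)$; here this would fail whenever $\rank(A)>0$, since $B=-I$ already spans $\R^n$. However, as remarked in the proof of the lemma, any failure of this condition corresponds to redundant rows of $B$, and the bound derived only becomes smaller after their removal. Hence the stated upper bound is still valid in our setting, and the corollary follows.
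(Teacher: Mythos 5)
Your proof is correct and follows essentially the same route as the paper: identify $B=-I_n$, substitute $m_B=n$ into the bound from Lemma~\ref{lem:caratheodory}, and simplify. Your extra remark about the rank condition failing whenever $\rank(A)>0$ (and why that is harmless, since pruning redundant rows of $B$ only strengthens the bound) is a sound observation that the paper's one-line proof leaves implicit.
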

\begin{proof}
Note that $B=-I_n$. The claim then follows from Lemma \ref{lem:caratheodory} by using $m_B=n$ .
\end{proof}

\begin{corollary}
Let $P=\{\,\vez\in \R^n:\,B\vez\leq \ved\,\}$ be a polyhedron in $\mathbb{R}^n$ and let $\vev^{(1)}, \vev^{(2)}$ be two of its vertices. Then there is a feasible sign-compatible circuit walk from $\vev^{(1)}$ to $\vev^{(2)}$ of length at most $\min\{m_B-n, n\}$.
\end{corollary}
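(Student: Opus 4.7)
The plan is to apply Lemma \ref{lem:caratheodory} directly, by interpreting the absence of equality constraints as the matrix $A$ being a $0\times n$ matrix, so that $\rank(A)=0$ and the condition $A\vez=\veb$ is vacuous. Then the polyhedron $P=\{\vez\in\R^n:B\vez\leq\ved\}$ fits exactly into the framework of Lemma \ref{lem:caratheodory}, and this is the same specialization pattern as in the previous corollary (which went the other way by setting $B=-I_n$).

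Substituting $\rank(A)=0$ into the bound $\min\{\rank(A)-n+m_B,\,n-\rank(A)\}$ from Lemma \ref{lem:caratheodory} immediately yields $\min\{m_B-n,\,n\}$, which is the asserted bound. Notice how the two cases in the proof of the lemma match the two pieces of the $\min$: the case $m_B\leq 2(n-\rank(A))=2n$ gives $m_B-n$, while $m_B\geq 2n$ gives $n$.

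The only bookkeeping items to check are minor. First, the regularity hypothesis $\rank\!\bigl(\binom{A}{B}\bigr)=\rank(A)+\rank(B)$ used at the start of the proof of Lemma \ref{lem:caratheodory} holds trivially when $A$ is empty. Second, the bound has to be well-defined and non-negative: as stated in the introduction, $B$ must have full row rank $n$ for $P$ to possess vertices, so in particular $m_B\geq n$, which guarantees $m_B-n\geq 0$. There is no genuine obstacle here; the corollary is a one-line specialization of Lemma \ref{lem:caratheodory}, and the resulting feasible sign-compatible circuit walk is exactly the one produced by the Caratheodory argument in that proof.
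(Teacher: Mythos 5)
Your proposal is correct and is precisely the paper's own argument: set $\rank(A)=0$ in Lemma \ref{lem:caratheodory} to obtain the bound $\min\{m_B-n,\,n\}$. The extra sanity checks you include (the rank condition is vacuous when $A$ is empty; $m_B\geq n$ ensures non-negativity) are sound but not needed in the paper's one-line proof.
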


\begin{proof}
There is no matrix $A$ in the description of the polyhedron, so the claim follows from Lemma \ref{lem:caratheodory} by using $\rank(A)=0$.
\end{proof}

We now relax the constraint \textbf{(e)} and allow circuit walks through the interior of the polyhedron.
For feasible maximal circuit walks we again distinguish between $\CD_{fm}$ and  $\CD_{fmb}, \CD_{fmr}$ and we prove that these concepts do not coincide in Lemmas \ref{ex: backwards is stronger} and \ref{ex: repeated is stronger}, that is, $\CD_{fm}<\CD_{fmb}$ and $\CD_{fm}<\CD_{fmr}$. Additionally, we show that the second layer connects to the first one not only by the obvious weak inequalities, but by $\CD_{efm(b)(r)}>\CD_{fm(b)(r)}$ in Lemma \ref{ex: edge-directions is stronger}, using a polytope from \cite{bfh-14}.

In the third and lower layers of the table, we drop the maximality condition. This may again reduce the distance of vertices, which we show in Lemma \ref{ex: arbitrary length steps is stronger}. We further prove that requiring a non-repetitive walk may increase the distance of a feasible walk, i.e{.} $\CD_{fr} > \CD_{f}$ and  $\CD_{fbr} > \CD_{fb}$ by constructing a special four-dimensional polytope in Lemma \ref{ex: with repeated is stronger}. In contrast we only know $\CD_{fb} \geq \CD_{f}$ and $\CD_{fbr} \geq \CD_{fr}$ when the non-backwards restriction is dropped. These are the only weak inequalities in the hierarchy, and we conjecture that these are strict as well. In Lemma \ref{ex: 5dim}, we explain why a polytope proving this conjecture has to be of dimension five or higher. 

We conclude the feasible circuit walks with sign-compatible ones, i.e{.} $\CD_{fs}$. Unlike the many combinations where a weak inequality is clear from imposing additional or less constraints, it is not obvious for $\CD_{fs}\geq \CD_{fbr}$.
\begin{lemma}\label{lem:smeansbr}
Any optimal sign-compatible circuit walk is in fact feasible, non-backwards and non-repetitive.
\end{lemma}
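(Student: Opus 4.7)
The plan is to verify the three conclusions separately, noting up front that feasibility and non-backwards actually hold for \emph{every} sign-compatible walk, so only non-repetitiveness will genuinely use the minimality of the walk.

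For non-backwards, if a walk were to use both $\veg$ and $-\veg$, then pairwise sign-compatibility would give $(B\veg)_j\cdot(-B\veg)_j=-(B\veg)_j^2\geq 0$ in every row $j$, forcing $B\veg=\veo$. Since $B$ has rank $n$, this forces $\veg=\veo$, contradicting the fact that circuits are nonzero. For non-repetitiveness, if a circuit $\veg$ appeared at two positions with positive multipliers $\alpha$ and $\beta$, merging these into one step with multiplier $\alpha+\beta>0$ yields a walk with the same endpoints, the same set of circuits (hence still sign-compatible), and strictly fewer steps, contradicting optimality.

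The bulk of the argument is feasibility. Writing an intermediate point as $\vey^{(\ell)}=\vev^{(1)}+\sum_{i=0}^{\ell-1}\alpha_i\veg^i$, the equation $A\vey^{(\ell)}=\veb$ is immediate from $\veg^i\in\ker(A)$, so only the inequalities $(B\vey^{(\ell)})_j\leq d_j$ need checking. Fix a row $j$ and set $\tau_j:=(B(\vev^{(2)}-\vev^{(1)}))_j$. If $\tau_j>0$, sign-compatibility with the direction gives $(B\veg^i)_j\geq 0$ for each $i$, so the partial sums $\sum_{i<\ell}\alpha_i(B\veg^i)_j$ are non-decreasing and bounded above by $\tau_j$; hence $(B\vey^{(\ell)})_j\leq(B\vev^{(2)})_j\leq d_j$. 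The case $\tau_j<0$ is analogous, with non-positive partial sums, yielding $(B\vey^{(\ell)})_j\leq(B\vev^{(1)})_j\leq d_j$.

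The delicate case, and the main obstacle, is $\tau_j=0$: sign-compatibility with the direction gives no information in such a row. Here I would invoke the \emph{pairwise} sign-compatibility among the circuits, which forces all $(B\veg^i)_j$ to share a common sign, after which the identity $\sum_i\alpha_i(B\veg^i)_j=\tau_j=0$ together with $\alpha_i>0$ pins each $(B\veg^i)_j$ to zero. Consequently $(B\vey^{(\ell)})_j=(B\vev^{(1)})_j\leq d_j$, completing the verification of feasibility; without using the pairwise condition there would be no way to rule out oscillations in rows where the endpoints happen to agree.
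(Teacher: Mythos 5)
Your proof is correct and covers the same three claims as the paper's, but is considerably more explicit, and your treatment of the non-backwards property is genuinely different. The paper's proof is terse: it asserts feasibility ``follows immediately from the definition,'' notes that the steps of a sign-compatible walk can be applied in any order, and then handles both non-backwards and non-repetitive via a single merging argument (reorder so that all steps using $\pm\veg^i$ are consecutive, then combine them into one step). You spell out feasibility row-by-row and correctly isolate the subtle point: when $(B(\vev^{(2)}-\vev^{(1)}))_j=0$, sign-compatibility with the direction is vacuous and one must invoke \emph{pairwise} sign-compatibility of the circuits to rule out oscillation in that row. For non-backwards you prove something strictly stronger than the paper states: $\veg$ and $-\veg$ can never both appear in a sign-compatible walk, since pairwise sign-compatibility gives $(B\veg)_j\cdot(-(B\veg)_j)\geq 0$ for all $j$, forcing $B\veg=\veo$ and hence $\veg=\veo$ (by the rank assumption on $B$, or more generally because $\veg\in\ker(A)$ and $\ker(A)\cap\ker(B)=\{\veo\}$ when $P$ has vertices). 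So every sign-compatible walk is automatically non-backwards regardless of optimality, and---as you note---minimality is genuinely used only for non-repetitiveness. Both routes are valid; yours makes the logical dependencies sharper.
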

\begin{proof}
It follows immediately from the definition of sign-compatible walks, that these are feasible. In fact, the steps of a sign-compatible circuit walk can be applied in arbitrary order, yielding feasible sign-compatible walks again. Hence by reordering, we can assume that all steps that use a circuit $\pm \veg^i$ are applied consecutively. Thus these multiple steps could be combined into a single circuit step which yields a shorter circuit walk.
\end{proof}

Later we will prove in Lemma \ref{Ex: sign pattern compatible is stronger} that even $\CD_{fs}> \CD_{fbr}$ holds.

In the final part of the hierarchy, shown in the lowest horizontal layer of the table, we do not even require feasibility. This can indeed be an advantage as demonstrated in Lemma \ref{Ex: infeasibility is stronger}. Here we only have to consider $\CD$.  Lemma \ref{lem:smeansbr} tells us that every optimal sign-compatible walk is feasible, hence $\CD_s=\CD_{fs}$, and similar arguments  show that optimal soft circuit walks are non-repetitive and non-backwards, that is $\CD=\CD_r=\CD_b$.


Let us point out that there are classes of polyhedra for which the whole hierarchy `collapses'. For example, in simplices all pairs of vertices are connected by an edge, so all circuit diameters equal one. For any $n$-dimensional zonotope, all circuit diameters are equal to $n$;  the $n$-dimensional cube is a particularly simple special case. Recall that a zonotope is point-symmetric with respect to its center of gravity. Vertices that correspond to each other with respect to the point symmetry are connected by an edge walk of length exactly $n$. Using any set of circuits and no restrictions on the walk we cannot do any better, as the circuits here correspond to the actual, existing edge directions.

Finally we turn to the proofs for the `strict inequalities' in our hierarchy. We begin with the relation of edge walks and feasible maximal circuit walks. 

\begin{lemma}[$\CD_{efm(b)(r)}>\CD_{fm(b)(r)}$] \label{ex: edge-directions is stronger}
For $n=2$, there is a polytope with a pair of vertices for which every optimal feasible maximal circuit walk is not an edge walk, and there is such a walk that is non-repetitive and non-backwards. Hence the distances $\CD_{efm}$ and $\CD_{fm}$, the distances $\CD_{efmb}$ and $\CD_{fmb}$, and the distances $\CD_{efmr}$ and $\CD_{fmr}$ differ in this case.
\end{lemma}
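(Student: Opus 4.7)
The plan is to exhibit a single two-dimensional polytope together with a pair of its vertices that simultaneously witnesses all three strict inequalities $\CD_{efm}>\CD_{fm}$, $\CD_{efmb}>\CD_{fmb}$, and $\CD_{efmr}>\CD_{fmr}$. The natural candidate, as indicated, is a polygon taken from \cite{bfh-14}, but the underlying idea is easy to describe and I would model my example on it: start from a hexagon $P=\{\vex\in\R^2:B\vex\le \ved\}$ and a pair of antipodal vertices $\vev^{(1)},\vev^{(2)}$ whose connecting chord is parallel to an edge of $P$. For $n=2$ the circuits of $B$ are, up to scaling and sign, exactly the vectors perpendicular to the individual rows of $B$, so in particular every edge direction of $P$ is a circuit. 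A regular hexagon with $\vev^{(1)},\vev^{(2)}$ picked as two antipodal vertices is a concrete model of this picture.

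Given such a $P$, my next step would be to verify that the chord can be traversed by a single feasible maximal circuit step $\vev^{(1)}\to \vev^{(1)}+\alpha\veg=\vev^{(2)}$: convexity places the segment inside $P$, and $\alpha$ is maximal because pushing beyond $\vev^{(2)}$ immediately leaves $P$. This forces $\CD_{fm}\le 1$, hence $\CD_{fm}=1$ since $\vev^{(1)}\ne\vev^{(2)}$. A one-step walk trivially cannot repeat a circuit or use a pair $\veg,-\veg$, so the same walk certifies $\CD_{fmb}=1=\CD_{fmr}$ as well. Thus one single shortcut handles all three cases at once.

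On the edge-walk side, I would simply invoke the fact that opposite vertices of a hexagon are at graph distance three, so $\CD_{efm}=3$; adding the non-backwards or non-repetitive restriction to an edge walk can only weakly increase the distance, and for a hexagon both restrictions are in fact automatically satisfied by either of the two boundary paths, so $\CD_{efmb}=\CD_{efmr}=3$. Combining this with the shortcut yields $\CD_{efm(b)(r)}=3>1=\CD_{fm(b)(r)}$, which is exactly the claim.

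The main obstacle, and the reason an explicit example must be chosen rather than a generic hexagon, is arranging that the chord $\vev^{(2)}-\vev^{(1)}$ actually coincides with a circuit direction of $B$: for a generic hexagon the chord is not perpendicular to any row of $B$, so no length-one circuit shortcut exists. One therefore has to pick the hexagon so that two of its opposite edges are parallel to the chord, as is the case in a regular hexagon or the specific example from \cite{bfh-14}. Should the polytope from \cite{bfh-14} not admit a length-one shortcut, the same scheme still goes through with a length-two walk $\vev^{(1)}\to\vey\to\vev^{(2)}$ through the interior built from two distinct maximally extended circuits; two steps with two distinct circuits are again automatically non-repetitive and non-backwards, and $2<3$ still delivers all three strict inequalities.
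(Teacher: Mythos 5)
Your approach is essentially the same as the paper's: exhibit a hexagon in $\R^2$ and a pair of opposite vertices where a maximal feasible circuit walk through the interior beats the length-three edge walk, noting that a short walk with distinct circuits is automatically non-backwards and non-repetitive. The paper's hexagon yields $\CD_{fm}=2$ via a two-step interior shortcut, while your regular-hexagon variant yields $\CD_{fm}=1$ because the chord between antipodal vertices is itself parallel to two edges and hence a circuit direction; both choices witness the claimed strict inequalities.
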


\begin{proof}
In the polytope below, an optimal edge walk from $\vev^{(1)}$ to $\vev^{(2)}$ along the edges has length three, while there is a feasible maximal non-repetitive non-backwards circuit walk of length two.
	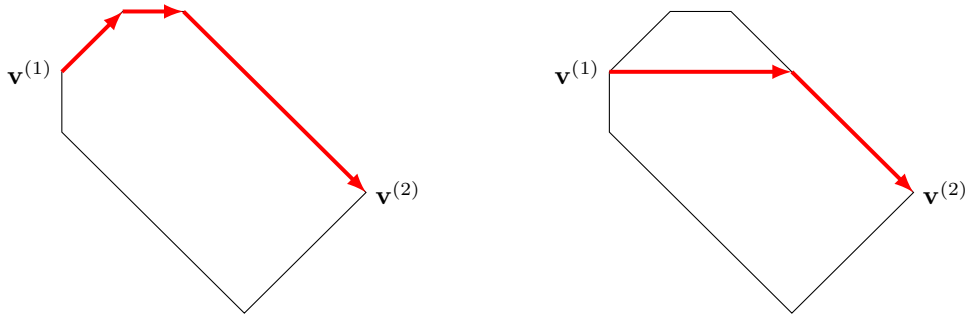
\begin{figure}[H]
		\centering
			\begin{tikzpicture}[scale=0.8]
					\coordinate (v1) at (0,1);
					\coordinate (v2) at (1,2);
					\coordinate (v3) at (2,2);
					\coordinate (v4) at (5,-1);
					\coordinate (v5) at (3,-3);
					\coordinate (v6) at (0,0);
					\draw[black] (v1)--(v2)--(v3)--(v4)--(v5)--(v6)--(v1);					
					\node[left] at (v1) {$\vev^{(1)}$};
					\node[right] at (v4) {$\vev^{(2)}$};
					\draw[line width= 1.5, red, ->] (v1)--(v2);
					\draw[line width= 1.5, red, ->] (v2)--(v3);
					\draw[line width= 1.5, red, ->] (v3)--(v4);
			\end{tikzpicture}
			\qquad \qquad
			\begin{tikzpicture}[scale=0.8]
					\coordinate (v1) at (0,1);
					\coordinate (v2) at (1,2);
					\coordinate (v3) at (2,2);
					\coordinate (v4) at (5,-1);
					\coordinate (v5) at (3,-3);
					\coordinate (v6) at (0,0);
					\draw[black] (v1)--(v2)--(v3)--(v4)--(v5)--(v6)--(v1);					
					\node[left] at (v1) {$\vev^{(1)}$};
					\node[right] at (v4) {$\vev^{(2)}$};
					\draw[line width= 1.5, red, ->] (v1)--(3,1);
					\draw[line width= 1.5, red, ->] (3,1)--(v4);
			\end{tikzpicture}
	\caption{An optimal edge walk and an optimal feasible maximal walk.}
	\end{figure}
\end{proof}

Next we turn to dropping maximality of a feasible circuit walk.

\begin{lemma}[$\CD_{fm(b)(r)}>\CD_{f(b)(r)}$]\label{ex: arbitrary length steps is stronger}
For $n=2$, there is a polytope with a pair of vertices for which every optimal feasible circuit walk is not maximal, and there is such a walk that is non-repetitive and non-backwards. Hence the distances $\CD_{fm}$ and $\CD_{f}$, the distances $\CD_{fmb}$ and $\CD_{fb}$, and the distances $\CD_{fmr}$ and $\CD_{fr}$ differ in this case.
\end{lemma}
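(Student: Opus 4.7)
The plan is to exhibit a convex polygon $P\subset\R^2$ and two vertices $\vev^{(1)},\vev^{(2)}$ for which $\CD_f(\vev^{(1)},\vev^{(2)})<\CD_{fm}(\vev^{(1)},\vev^{(2)})$, with the shorter feasible walk also being non-repetitive and non-backwards. By Theorem~\ref{thm:numberofsteps} we already have $\CD_f\le 2$ in dimension two. Moreover, if $\vev^{(2)}-\vev^{(1)}$ were a circuit direction then, since $\vev^{(2)}$ is an extreme point, the ray from $\vev^{(1)}$ in direction $\vev^{(2)}-\vev^{(1)}$ cannot continue past $\vev^{(2)}$ inside $P$ (otherwise $\vev^{(2)}$ would be a non-trivial convex combination of $\vev^{(1)}$ and a further point of $P$), so the maximal step lands exactly at $\vev^{(2)}$ and $\CD_f=\CD_{fm}=1$. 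Hence we arrange $\vev^{(2)}-\vev^{(1)}$ \emph{not} to be a circuit direction and target $\CD_f=2$ with $\CD_{fm}\ge 3$.

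The two-step feasible walk can be forced to be non-maximal by placing $\vev^{(1)}$ at the end of a ``long'' edge in circuit direction $\veg_1$, choosing $\vep=\vev^{(1)}+\alpha_1\veg_1$ in the strict interior of that edge such that $\vev^{(2)}-\vep$ lies in a second circuit direction $\veg_2\ne\pm\veg_1$; the maximal extension along $\veg_1$ then overshoots $\vep$. The walk $\vev^{(1)}\to\vep\to\vev^{(2)}$ uses two distinct circuits with no $\pm$-pair, so it is automatically non-repetitive and non-backwards. The delicate step is to rule out every feasible maximal walk of length two: for each feasible circuit direction from $\vev^{(1)}$, compute the (unique) maximal endpoint $\vep'$ on $\partial P$ and verify that no circuit direction from $\vep'$ yields a maximal step landing on $\vev^{(2)}$. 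The main trap is that $\vep'$ may lie in the interior of an edge of $P$; a maximal step along that edge then reaches the adjacent vertex, so if an edge incident to $\vev^{(2)}$ contains some maximal endpoint of $\vev^{(1)}$, a two-step maximal walk succeeds. I would block this by inserting ``buffer'' vertices adjacent to $\vev^{(2)}$ that redirect the incident edges away from the circuit directions reachable from $\vev^{(1)}$.

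Concretely, take the heptagon $P$ with vertices $\vev^{(1)}=(0,0)$, $(5,0)$, $(7,1)$, $(7,4)$, $\vev^{(2)}=(3,5)$, $(2,5)$, $(0,4)$; its circuit directions are $\pm(1,0),\pm(2,1),\pm(0,1),\pm(4,-1)$, and $\vev^{(2)}-\vev^{(1)}=(3,5)$ is not among them. The walk $\vev^{(1)}\to(3,0)\to\vev^{(2)}$ via $(1,0)$ then $(0,1)$ is feasible of length two, non-repetitive and non-backwards, with non-maximal first step (the maximal endpoint of $\vev^{(1)}$ in direction $(1,0)$ is $(5,0)$). The only feasible maximal steps from $\vev^{(1)}$ are in directions $(1,0),(0,1),(2,1)$, with maximal endpoints $(5,0)$, $(0,4)$, $(7,7/2)$ respectively, and a direct check over all circuit directions from each of these three points shows that none reaches $\vev^{(2)}$ in a further maximal step. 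The vertex $(2,5)$ is essential: without it, the edge from $\vev^{(2)}$ to $(0,4)$ would have circuit direction $(3,1)$, enabling the maximal walk $\vev^{(1)}\to(0,4)\to\vev^{(2)}$. The length-three maximal walk $\vev^{(1)}\to(5,0)\to(5,9/2)\to\vev^{(2)}$ using $(1,0),(0,1),(-4,1)$ certifies $\CD_{fm}=3$; since its circuits are all distinct and non-paired, a single example realizes the strict inequalities $\CD_{fm}>\CD_f$, $\CD_{fmb}>\CD_{fb}$, and $\CD_{fmr}>\CD_{fr}$ simultaneously.
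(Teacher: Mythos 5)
Your proposal is correct and follows essentially the same strategy as the paper: construct an explicit convex polygon, verify that the target is not reachable in two maximal steps by enumerating the maximal first steps and all circuit directions from their endpoints, and exhibit a short non-maximal feasible walk that is non-repetitive and non-backwards. The paper uses the same hexagon that recurs in its figures (with $\vev^{(1)}=(0,1)$, $\vev^{(2)}=(5,-1)$, walking from $\vev^{(2)}$ to $\vev^{(1)}$), whereas you build a fresh heptagon, but the underlying argument is the same.
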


\begin{proof} 
In the polytope below, an optimal feasible maximal walk from $\vev^{(2)}$ to $\vev^{(1)}$ has length at least three: No matter which circuit direction we apply at $\vev^{(2)}$ with maximum length, we cannot get to $\vev^{(1)}$ in just one additional step. 
	\begin{figure}[H]
		\centering
			\begin{tikzpicture}[scale=0.8]
					\coordinate (v1) at (0,1);
					\coordinate (v2) at (1,2);
					\coordinate (v3) at (2,2);
					\coordinate (v4) at (5,-1);
					\coordinate (v5) at (3,-3);
					\coordinate (v6) at (0,0);
					\draw[black] (v1)--(v2)--(v3)--(v4)--(v5)--(v6)--(v1);					
					\node[left] at (v1) {$\vev^{(1)}$};
					\node[right] at (v4) {$\vev^{(2)}$};
					\draw[line width= 1.5, red, ->] (v4)--(v3);
					\draw[line width= 1.5, red, ->] (v4)--(1,-1);
					\draw[line width= 1.5, red, ->] (v4)--(v5);
			\end{tikzpicture}
		\caption{Possible first feasible maximal circuit steps at $\vev^{(1)}$.}
	\end{figure}
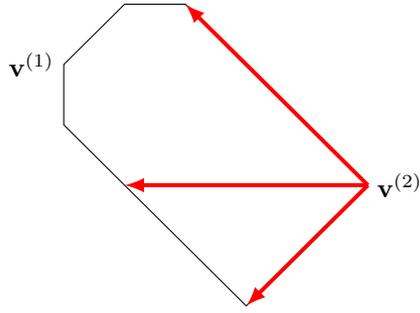
On the other hand, there is a feasible non-repetitive non-backwards circuit walk of length two.	
	\begin{figure}[H]
		\centering
			\begin{tikzpicture}[scale=0.8]
					\coordinate (v1) at (0,1);
					\coordinate (v2) at (1,2);
					\coordinate (v3) at (2,2);
					\coordinate (v4) at (5,-1);
					\coordinate (v5) at (3,-3);
					\coordinate (v6) at (0,0);
					\draw[black] (v1)--(v2)--(v3)--(v4)--(v5)--(v6)--(v1);					
					\node[left] at (v1) {$\vev^{(1)}$};
					\node[right] at (v4) {$\vev^{(2)}$};
					\draw[line width= 1.5, red, ->] (v4)--(3,1);
					\draw[line width= 1.5, red, ->] (3,1)--(v1);
			\end{tikzpicture}
	\caption{A feasible circuit walk of length two.}
	\end{figure}
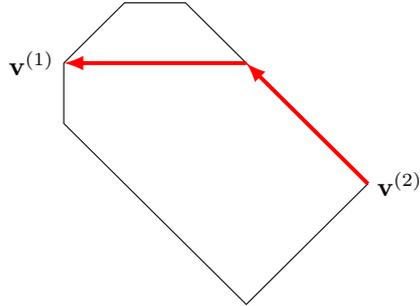
\end{proof}

We now show that a soft circuit walk may be shorter than an optimal feasible circuit walk.

\begin{lemma}[$\CD_{f}>\CD_{}$] \label{Ex: infeasibility is stronger} 
For $n=3$, there is a polytope with a pair of vertices for which no optimal circuit walk with respect to $\CD$ is feasible, and there is such an optimal walk that is sign-compatible. Hence the distance $\CD_{f}$ and $\CD_{}$ differ in this case.
\end{lemma}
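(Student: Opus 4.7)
The plan is to construct an explicit 3-dimensional polytope $P=\{\,\vez\in \R^3 : B\vez \le \ved\,\}$ together with two vertices $\vev^{(1)},\vev^{(2)}$ of $P$ for which a short unrestricted circuit walk between them is forced to exit $P$, while every feasible circuit walk is strictly longer. The guiding idea is to take a two-dimensional polygon of the type used in Lemma \ref{ex: arbitrary length steps is stronger}, in which $\vev^{(1)}$ and $\vev^{(2)}$ are separated in a way that already prevents a direct two-edge walk, and to thicken it in a third direction with \emph{slanted} facets. The extra rows in $B$ coming from the slanted facets create new circuits with a nonzero third component; combining two such ``opposite-slanted'' circuits cancels their third components while summing in the $(x,y)$-plane to the forbidden direction $\vev^{(2)}-\vev^{(1)}$, but the intermediate point acquires a third coordinate outside the allowed range.

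First I would write down $P$ explicitly by giving the matrix $B$ and vector $\ved$, identify $\vev^{(1)}$ and $\vev^{(2)}$, and display a decomposition $\vev^{(2)}-\vev^{(1)}=\alpha_1\veg^1+\alpha_2\veg^2$ with $\alpha_1,\alpha_2>0$ and circuits $\veg^1,\veg^2$ of $B$ for which $\vev^{(1)}+\alpha_1\veg^1\notin P$. This yields a soft circuit walk of length two, so $\CD\le 2$, and the lower bound $\CD\ge 2$ follows from the direct check that $\vev^{(2)}-\vev^{(1)}$ is not a scalar multiple of any circuit of $B$. I would simultaneously arrange the decomposition so that $B\veg^1, B\veg^2$ and $B(\vev^{(2)}-\vev^{(1)})$ lie in a common closed orthant of $\R^{m_B}$, producing the sign-compatibility claimed in the statement.

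Next, and this will be the main obstacle, I would prove $\CD_f\ge 3$ by ruling out every feasible two-step circuit walk from $\vev^{(1)}$ to $\vev^{(2)}$. For this I would enumerate the (finitely many) circuits of $B$ and, for every pair $(\veg,\veh)$ of circuits and every positive pair $(\alpha,\beta)$ with $\alpha\veg+\beta\veh=\vev^{(2)}-\vev^{(1)}$, verify that the intermediate point $\vev^{(1)}+\alpha\veg$ violates some inequality in $B\vez\le\ved$. The constraint that $\alpha\veg+\beta\veh$ lie in the two-plane spanned by $\veg^1$ and $\veg^2$ cuts the case analysis down drastically, and $P$ is designed so that every such candidate intermediate point is separated from $P$ by one of the slanted facets.

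The hard part will be tuning the construction so that no unintended two-term decomposition of $\vev^{(2)}-\vev^{(1)}$ sneaks through the interior of $P$: with too few slanted facets the polytope degenerates to a prism and the 2D circuits alone yield a feasible two-step walk, while with too many facets a new diagonal circuit may appear that happens to give a feasible two-step walk. Once the polytope is set up correctly, the remaining verifications reduce to checking finitely many linear inequalities, and the conclusion $\CD_f > \CD$ follows.
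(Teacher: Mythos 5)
Your proposal contains an internal inconsistency that must be resolved before any construction can succeed. You plan to build a two-step soft walk $\vev^{(2)}-\vev^{(1)}=\alpha_1\veg^1+\alpha_2\veg^2$ and to arrange that $B\veg^1$, $B\veg^2$ and $B(\vev^{(2)}-\vev^{(1)})$ lie in a common closed orthant. But any such sign-compatible walk is automatically feasible: for each row $j$, the values $(B\vey^{(0)})_j,(B\vey^{(1)})_j,(B\vey^{(2)})_j$ change monotonically from $(B\vev^{(1)})_j$ to $(B\vev^{(2)})_j$, both of which are $\le\ved_j$, so the intermediate point satisfies $B\vey^{(1)}\le\ved$. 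This is precisely Lemma \ref{lem:smeansbr}. Hence if your soft walk is sign-compatible then $\CD_f\le 2$, contradicting your target $\CD_f\ge 3$. (The clause ``there is such an optimal walk that is sign-compatible'' in the lemma statement appears to be a misstatement --- the paper's own proof never addresses it, and the exhibited length-two walk there visibly leaves the polytope, so it is not sign-compatible.) You should simply drop the sign-compatibility requirement on the soft walk.

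Setting that aside, your plan is a genuinely different construction from the paper's. The paper truncates six of the eight vertices of a cube by shallow cuts whose normals point toward the removed vertices, keeping an antipodal pair $\vev^{(1)},\vev^{(2)}$; the symmetry lets one read off the length-three lower bound by listing the few possible first steps out of $\vev^{(1)}$ and last steps into $\vev^{(2)}$ and noting that no single point is both a valid second and a valid penultimate point, while a length-two unrestricted walk through the excised corner still exists. Your ``thickened polygon with slanted facets'' idea could in principle do the same, but it is incomplete as written: you do not exhibit $B$, $\ved$, $\vev^{(1)}$, $\vev^{(2)}$, the circuit set, or the claimed decomposition, and you yourself identify the serious tuning issue --- the prismatic circuits (third component zero) must not connect $\vev^{(1)}$ and $\vev^{(2)}$ in two feasible steps (for instance by placing the two target vertices at different heights), and the new slanted circuits must not accidentally do so either. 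Until the explicit polytope and the finite enumeration of circuit pairs are carried out, the key inequality $\CD_f\ge 3$ is unproved.
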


\begin{proof}
The polytope below is obtained from a cube by cutting off six of its vertices using three pairs of hyperplanes, and keeping an opposite pair $\vev^{(1)},\vev^{(2)}$ of vertices as depicted. Assume the center of gravity of the cube is $\veo$. Then the normals of these hyperplanes are equal to the coordinates of the vertices cut off. The `depth' of the cuts is arbitrarily small.


Any feasible circuit walk from $\vev^{(1)}$ to $\vev^{(2)}$ has length at least three: To see this we illustrate the directions of all possible first steps at $\vev^{(1)}$ (red) and all possible last steps to $\vev^{(2)}$ (green) of a feasible circuit walk. Note that these steps are not necessarily maximal.
	\begin{figure}[H]
		\centering
			\begin{tikzpicture}[scale=0.3]
					\coordinate (v1) at (0,0);
					\coordinate (v21) at (8,0);
					\coordinate (v22) at (10,2);
					\coordinate (v23) at (11,1);
					\coordinate (v3) at (15,5);
					\coordinate (v31) at (14,4);
					\coordinate (v32) at (15,7);
					\coordinate (v33) at (13,5);
					\coordinate (v4) at (5,5);
					\coordinate (v41) at (5,7);
					\coordinate (v42) at (7,5);
					\coordinate (v43) at (4,4);
					\coordinate (v51) at (0,8);
					\coordinate (v52) at (2,10);
					\coordinate (v53) at (1,11);
					\coordinate (v61) at (8,10);
					\coordinate (v62) at (10,8);
					\coordinate (v63) at (11,11);
					\coordinate (v7) at (15,15);
					\coordinate (v81) at (7,15);
					\coordinate (v82) at (5,13);
					\coordinate (v83) at (4,14);
					\node[below left] at (v1) {$\vev^{(1)}$};
					\node[above right] at (v7) {$\vev^{(2)}$};				
	
					\draw (v62)--(v22);
					\draw (v31)--(v23);
					\draw (v63)--(v7);
					\draw (v32)--(v7);
					\draw[dashed] (v33)--(v42);
					\draw (v52)--(v61);
					\draw (v53)--(v83);
					\draw (v81)--(v7);
					\draw[dashed] (v82)--(v41);
					\draw (v21)--(v22)--(v23)--(v21);
					\draw[dashed] (v32)--(v33)--(v31);
					\draw (v31)--(v32);
					\draw[dashed] (v41)--(v42)--(v43)--(v41);
					\draw (v51)--(v52)--(v53)--(v51);
					\draw (v61)--(v62)--(v63)--(v61);
					\draw[dashed] (v81)--(v82)--(v83);
					\draw (v83)--(v81);
					\draw[line width= 1.5, red, ->] (v1)--(v51);
					\draw[line width= 1.5, red, ->] (v1)--(v21);
					\draw[line width= 1.5, red, ->, dashed] (v1)--(v43);		
					
					\draw[line width= 1.5, green, <-] (v7)--(v81);
					\draw[line width= 1.5, green, <-] (v7)--(v63);
					\draw[line width= 1.5, green, <-] (v7)--(v32);
			\end{tikzpicture}
			\qquad \qquad
				\begin{tikzpicture}[scale=0.3]
					\coordinate (v1) at (0,0);
					\coordinate (v21) at (8,0);
					\coordinate (v22) at (10,2);
					\coordinate (v23) at (11,1);
					\coordinate (v3) at (15,5);
					\coordinate (v31) at (14,4);
					\coordinate (v32) at (15,7);
					\coordinate (v33) at (13,5);
					\coordinate (v4) at (5,5);
					\coordinate (v41) at (5,7);
					\coordinate (v42) at (7,5);
					\coordinate (v43) at (4,4);
					\coordinate (v51) at (0,8);
					\coordinate (v52) at (2,10);
					\coordinate (v53) at (1,11);
					\coordinate (v61) at (8,10);
					\coordinate (v62) at (10,8);
					\coordinate (v63) at (11,11);
					\coordinate (v7) at (15,15);
					\coordinate (v81) at (7,15);
					\coordinate (v82) at (5,13);
					\coordinate (v83) at (4,14);
					\node[below left] at (v1) {$\vev^{(1)}$};
					\node[above right] at (v7) {$\vev^{(2)}$};				
					
					\draw[line width= 1.5, red, ->, dashed] (v1)--(13.5,4.5);
					\draw[line width= 1.5, red, ->, dashed] (v1)--(4.5,13.5);		
					\draw[line width= 1.5, green, <-, dashed] (v7)--(5.85,6.15);				

					\draw (v1)--(v51);
					\draw (v1)--(v21);
					\draw (v1)--(v43);						
					\draw (v62)--(v22);
					\draw (v31)--(v23);
					\draw (v63)--(v7);
					\draw (v32)--(v7);
					\draw[dashed] (v33)--(v42);
					\draw (v52)--(v61);
					\draw (v53)--(v83);
					\draw (v81)--(v7);
					\draw[dashed] (v82)--(v41);
					\draw (v21)--(v22)--(v23)--(v21);
					\draw[dashed] (v32)--(v33)--(v31);
					\draw (v31)--(v32);
					\draw[dashed] (v41)--(v42)--(v43)--(v41);
					\draw (v51)--(v52)--(v53)--(v51);
					\draw (v61)--(v62)--(v63)--(v61);
					\draw[dashed] (v81)--(v82)--(v83);
					\draw (v83)--(v81);
					\draw[line width= 1.5, red, ->] (v1)--(9.15,8.85);
					\draw[line width= 1.5, green, <-] (v7)--(10.5,1.5);
					\draw[line width= 1.5, green, <-] (v7)--(1.5,10.5);
			\end{tikzpicture}
	\caption{Possible first and last steps of a circuit walk from $\vev^{(1)}$ to $\vev^{(2)}$.}
	\end{figure}
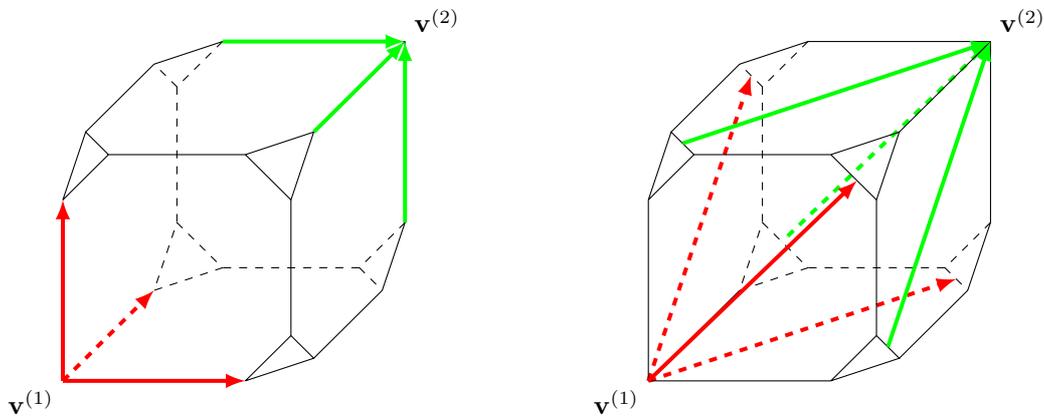
Clearly there is no point that (a)  can be reached in a single step from $\vev^{(1)}$ and (b) from which one can reach $\vev^{(2)}$ in a single step. Hence any feasible circuit walk from $\vev^{(1)}$ to $\vev^{(2)}$ has length at least three.

On the other hand, there is a soft circuit walk of length two from $\vev^{(1)}$ to $\vev^{(2)}$.
	\begin{figure}[H]
		\centering
			\begin{tikzpicture}[scale=0.3]
					\coordinate (v1) at (0,0);
					\coordinate (v21) at (8,0);
					\coordinate (v22) at (10,2);
					\coordinate (v23) at (11,1);
					\coordinate (v3) at (15,5);
					\coordinate (v31) at (14,4);
					\coordinate (v32) at (15,7);
					\coordinate (v33) at (13,5);
					\coordinate (v4) at (5,5);
					\coordinate (v41) at (5,7);
					\coordinate (v42) at (7,5);
					\coordinate (v43) at (4,4);
					\coordinate (v51) at (0,8);
					\coordinate (v52) at (2,10);
					\coordinate (v53) at (1,11);
					\coordinate (v61) at (8,10);
					\coordinate (v62) at (10,8);
					\coordinate (v63) at (11,11);
					\coordinate (v7) at (15,15);
					\coordinate (v81) at (7,15);
					\coordinate (v82) at (5,13);
					\coordinate (v83) at (4,14);
					\node[below left] at (v1) {$\vev^{(1)}$};
					\node[above right] at (v7) {$\vev^{(2)}$};				
					\draw (v1)--(v21);
					\draw (v62)--(v22);
					\draw (v31)--(v23);
					\draw (v63)--(v7);
					\draw (v32)--(v7);
					\draw[dashed] (v33)--(v42);
					\draw[dashed] (v43)--(v1);				
					\draw (v1)--(v51);
					\draw (v52)--(v61);
					\draw (v53)--(v83);
					\draw (v81)--(v7);
					\draw[dashed] (v82)--(v41);
					\draw (v21)--(v22)--(v23)--(v21);
					\draw[dashed] (v32)--(v33)--(v31);
					\draw (v31)--(v32);
					\draw[dashed] (v41)--(v42)--(v43)--(v41);
					\draw (v51)--(v52)--(v53)--(v51);
					\draw (v61)--(v62)--(v63)--(v61);
					\draw[dashed] (v81)--(v82)--(v83);
					\draw (v83)--(v81);
					\draw[dotted] (v21)--(10,0);
					\draw[dotted] (v22)--(10,0);
					\draw[dotted] (v23)--(10,0);
					\draw[line width= 1.5, red, ->] (v1)--(10,0);
					\draw[line width= 1.5, red, ->] (10,0)--(v7);
			\end{tikzpicture}
		\caption{A soft circuit walk of length two.}
	\end{figure}
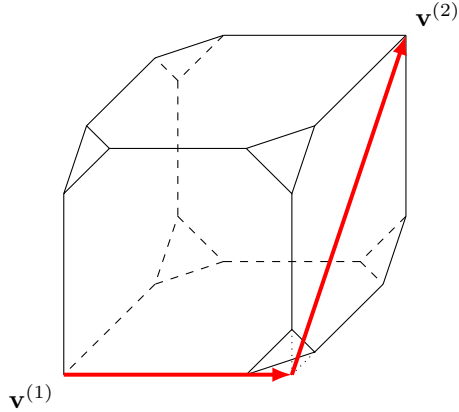
\end{proof}

The following two lemmas explain why allowing the use of edge directions both $\veg^i$ and $-\veg^i$ or the repeated use of an edge direction $\veg^i$ can yield a shorter edge walk.

\begin{lemma}[$\CD_{emfb}>\CD_{efm}$] \label{Ex: CD0 backwards steps}
For $n=2$, there is a polytope with a pair of vertices for which the unique optimal edge walk is backwards. Hence the distances $\CD_{efmb}$ and $\CD_{efm}$ differ in this case.
\end{lemma}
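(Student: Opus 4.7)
The plan is to exhibit a planar convex polygon with a pair of vertices whose unique optimal edge walk uses two parallel edges in opposite orientations. The structural observation driving the construction is that the graph of a $2$-dimensional convex polytope is a cycle, so any two vertices are joined by exactly two edge walks, one along each arc of the boundary, and on each arc the edge directions rotate monotonically. Consequently an arc has two antiparallel edges only when its angular span is at least $180^\circ$, and since the total rotation around the polygon is exactly $360^\circ$, at most one of the two arcs has this property.

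To realize the strict inequality it therefore suffices to build a convex polygon in which the shorter arc (in edge count) is precisely the one whose angular span reaches $180^\circ$. A concrete heptagon is given, in counter-clockwise order, by
\[
\vev^{(1)}=(0,0),\ (10,0),\ (10,1),\ \vev^{(2)}=(0,1),\ \bigl(-\tfrac{1}{2},\tfrac{3}{4}\bigr),\ \bigl(-\tfrac{7}{10},\tfrac{1}{2}\bigr),\ \bigl(-\tfrac{1}{2},\tfrac{1}{4}\bigr).
\]
Convexity is a routine cross-product check at each vertex. The arc $\vev^{(1)}\to(10,0)\to(10,1)\to\vev^{(2)}$ uses three edges in directions $(1,0)$, $(0,1)$, $(-1,0)$; the antiparallel pair $(1,0),(-1,0)$ shows that this walk is backwards. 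The other arc uses four edges sitting on the small convex bump to the left of the segment joining $\vev^{(1)}$ and $\vev^{(2)}$; a direct computation of their direction angles shows they lie in a window strictly narrower than $180^\circ$, so no two of them are antiparallel. Hence the first arc is the unique shortest edge walk (length three) and it is backwards, while every non-backwards edge walk has to take the second arc (length four), giving $\CD_{efmb}=4>3=\CD_{efm}$.

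The main obstacle is accommodating both constraints inside one convex polygon: a single arc must have an angular span of at least $180^\circ$ to host a pair of antiparallel edge directions, and at the same time that very arc must use fewer edges than the other one. The cluster of three small vertices on the left side of the polygon is included precisely to artificially inflate the edge count of the non-backwards arc without breaking convexity.
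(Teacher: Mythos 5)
Your proposal is correct and takes essentially the same approach as the paper: both exhibit a convex heptagon in which the three-edge arc between $\vev^{(1)}$ and $\vev^{(2)}$ sweeps a full $180^\circ$ (hence contains an antiparallel pair such as $(1,0),(-1,0)$) while the four-edge arc sweeps less than $180^\circ$ (hence is non-backwards), giving $\CD_{efmb}=4>3=\CD_{efm}$. Your preliminary discussion of the angular-span criterion is a pleasant extra piece of exposition explaining why the short arc must be the one that hosts the $180^\circ$ sweep, but the construction itself mirrors the paper's.
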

\begin{proof}
In the polytope below, the unique non-backwards edge walk from $\vev^{(1)}$ to $\vev^{(2)}$ has length four, while there is an edge walk of length three that uses edges in opposite directions.
	\begin{figure}[H]
		\centering
			\begin{tikzpicture}[scale=1]
					\coordinate (v1) at (0,0);
					\coordinate (v2) at (0,-3);
					\coordinate (v3) at (-1,-3);
					\coordinate (v4) at (-2,-2.5);
					\coordinate (v5) at (-2.5,-1.5);
					\coordinate (v6) at (-2,-0.5);
					\coordinate (v7) at (-1,0);
					\draw[black] (v1)--(v2)--(v3)--(v4)--(v5)--(v6)--(v7)--(v1);					
					\node[above] at (v7) {$\vev^{(1)}$};
					\node[below] at (v3) {$\vev^{(2)}$};
					\draw[line width= 1.5, red, ->] (v7)--(v6);
					\draw[line width= 1.5, red, ->] (v6)--(v5);
					\draw[line width= 1.5, red, ->] (v5)--(v4);
					\draw[line width= 1.5, red, ->] (v4)--(v3);
			\end{tikzpicture}
			\hspace{3cm}
			\begin{tikzpicture}[scale=1]
					\coordinate (v1) at (0,0);
					\coordinate (v2) at (0,-3);
					\coordinate (v3) at (-1,-3);
					\coordinate (v4) at (-2,-2.5);
					\coordinate (v5) at (-2.5,-1.5);
					\coordinate (v6) at (-2,-0.5);
					\coordinate (v7) at (-1,0);
					\draw[black] (v1)--(v2)--(v3)--(v4)--(v5)--(v6)--(v7)--(v1);					
					\node[above] at (v7) {$\vev^{(1)}$};
					\node[below] at (v3) {$\vev^{(2)}$};
					\draw[line width= 1.5, red, ->] (v7)--(v1);
					\draw[line width= 1.5, red, ->] (v1)--(v2);
					\draw[line width= 1.5, red, ->] (v2)--(v3);
			\end{tikzpicture}
	\caption{An optimal non-backwards edge walk and a backwards edge walk.}\label{Fig: CD0 backwards steps}
	\end{figure}
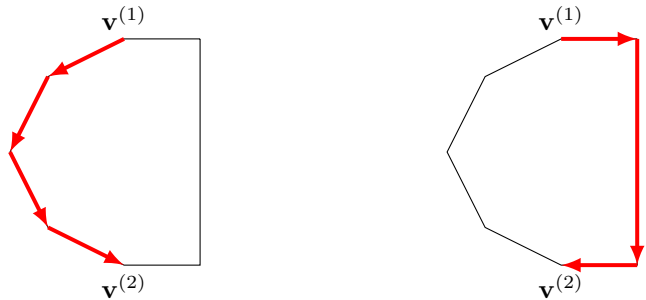
\end{proof}

\begin{lemma}[$\CD_{efmr}>\CD_{efm}$] \label{Ex: CD0 repeats steps}
For $n=3$, there is a polytope with a pair of vertices for which the unique optimal edge walk is repetitive. Hence the distances $\CD_{efmr}$ and $\CD_{efm}$ differ in this case.
\end{lemma}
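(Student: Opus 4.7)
The plan is to exhibit an explicit 3-dimensional polytope $P$ together with two of its vertices $\vev^{(1)}$ and $\vev^{(2)}$ such that the unique shortest edge walk between them repeats some circuit direction, and every edge walk that avoids such repetition is strictly longer. The jump from $n=2$ to $n=3$ is necessary because a convex polygon has at most two edges of a given circuit direction and those two must be antiparallel, which belongs to the backwards phenomenon treated in Lemma~\ref{Ex: CD0 backwards steps} rather than to the repetition phenomenon we need here.

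First I would pick four points $\vev^{(1)}, \vew_1, \vew_2, \vev^{(2)}$ in general position with $\vew_1-\vev^{(1)}$ and $\vev^{(2)}-\vew_2$ chosen to be positive scalar multiples of the same primitive integer vector $\veg$, so that the three-step walk $\vev^{(1)}\to\vew_1\to\vew_2\to\vev^{(2)}$ automatically uses $\veg$ twice. Then I would augment the convex hull with a small number of auxiliary vertices to enforce: (a) $\vev^{(1)},\vew_1,\vew_2,\vev^{(2)}$ are genuine vertices of $P$; (b) the segments $\vev^{(1)}\vev^{(2)}$, $\vev^{(1)}\vew_2$, and $\vew_1\vev^{(2)}$ are not edges of $P$, so there is no length-$1$ shortcut and no length-$2$ shortcut through $\vew_1$ or $\vew_2$; and (c) no auxiliary vertex is a common neighbor of $\vev^{(1)}$ and $\vev^{(2)}$, ruling out every remaining length-$2$ walk. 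A clean way to achieve (b)--(c) is to place the auxiliary vertices on facets far from $\vev^{(1)}$ and $\vev^{(2)}$, forming a \emph{cap} that pushes the forbidden diagonals into the relative interior of $2$-faces without producing any new common neighbor.

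Verification would proceed in three stages. First, I would display a linear functional uniquely maximized at each claimed vertex to confirm the $V$-description of $P$. Second, I would determine the 1-skeleton by computing the facet lattice and reading off which pairs of vertices share at least two facets. Third, I would enumerate all edge walks from $\vev^{(1)}$ to $\vev^{(2)}$ of length at most $3$: properties (b)--(c) force the only such walk to be $\vev^{(1)}\to\vew_1\to\vew_2\to\vev^{(2)}$, so $\CD_{efm}=3$ and this walk repeats $\veg$. An explicit non-repetitive edge walk of length $4$ (routed through one auxiliary vertex) then certifies $\CD_{efmr}\geq 4>3=\CD_{efm}$.

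The main technical obstacle is ensuring that the auxiliary vertices introduced to block the forbidden shortcuts do not inadvertently create new short edges or new common neighbors of $\vev^{(1)}$ and $\vev^{(2)}$. I would control this by choosing all auxiliaries to lie on a single supporting hyperplane separated from $\vev^{(1)}$ and $\vev^{(2)}$, so that any new edges they introduce remain on the far side of $P$, leaving the chain $\vew_1\to\vew_2$ as the unique bridge between the two ends of the forced walk.
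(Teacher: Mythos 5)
There is a fundamental obstruction that makes your plan unworkable before you even reach the technical step of choosing auxiliary vertices: a three-step repetitive \emph{edge} walk cannot exist in any polytope, so the target $\CD_{efm}=3$ you are aiming for is unattainable.

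To see why, suppose an edge walk $\vev^{(1)}\to\vew_1\to\vew_2\to\vev^{(2)}$ uses the same circuit $\veg$ (with the same sign, as required for a \emph{repetition} rather than a \emph{backwards} step) at two of its three steps. Two consecutive steps cannot share a direction, since that would make the middle point a relative-interior point of a longer segment rather than a vertex; so it must be steps $1$ and $3$, i.e. $\vew_1-\vev^{(1)}=\alpha\veg$ and $\vev^{(2)}-\vew_2=\beta\veg$ with $\alpha,\beta>0$. The two edges $[\vev^{(1)},\vew_1]$ and $[\vew_2,\vev^{(2)}]$ are then parallel segments, hence coplanar, and the plane $\pi$ they span also contains $\vew_1$ and $\vew_2$; thus all four points lie in $\pi$, and so do the three edges of $P$ joining them. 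Consider the convex polygon $Q=P\cap\pi$. Each of those three edges of $P$ lies in $\pi$ and is therefore an edge of $Q$, so the walk is a length-three path in the cycle graph of $Q$, visiting four distinct consecutive boundary vertices. But along a monotone traversal of the boundary of a convex polygon the edge direction rotates strictly, so the first and third edges of the path cannot point the same way. Contradiction. (This is precisely why the two parallel sides of a trapezoidal cross-section appear as a \emph{backwards} pair, not a repeated pair, when traversed in order.)

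Consequently, any example witnessing $\CD_{efmr}>\CD_{efm}$ must have $\CD_{efm}\geq 4$. The paper's construction respects this: it truncates a $3$-cube to obtain a polytope whose unique optimal edge walk between the chosen vertices has length \emph{four} and repeats a circuit, with every non-repetitive alternative having length at least five. Beyond this conceptual gap, your write-up is also only a plan (``I would pick\ldots'', ``I would augment\ldots'') rather than a concrete polytope with verified vertex set, edge set, and walk lengths, which an existence statement of this kind requires. If you rework the attempt around a length-four walk and then actually exhibit and verify a polytope, the general strategy (blocking short detours and common neighbors by auxiliary vertices) is sound and in the spirit of the paper's truncated-cube example.
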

\begin{proof}
We construct a polytope with the claimed property by cutting off vertices of a three-dimensional cube as illustrated in the following figures:
	\begin{figure}[H]
		\centering
			\begin{tikzpicture}[scale=0.2]
					\coordinate (v1) at (0,0);
					\coordinate (v2) at (10,0);
					\coordinate (v3) at (15,5);
					\coordinate (v4) at (5,5);
					\coordinate (v5) at (0,10);
					\coordinate (v6) at (10,10);
					\coordinate (v7) at (15,15);
					\coordinate (v8) at (5,15);
					
					\draw (v1)--(v2);
					\draw (v2)--(v3);
					\draw[dashed] (v3)--(v4);
					\draw[dashed] (v4)--(v1);
					\draw (v5)--(v6);
					\draw (v6)--(v7);
					\draw (v7)--(v8);
					\draw (v8)--(v5);
					\draw (v1)--(v5);
					\draw (v2)--(v6);
					\draw (v3)--(v7);
					\draw[dashed] (v4)--(v8);
					\draw [fill, black] (v5) circle (0.3);
					\draw [loosely dotted] (0,10)--(5,15);
			\end{tikzpicture}
			\qquad
			\begin{tikzpicture}[scale=0.2]
					\coordinate (v1) at (0,0);
					\coordinate (v2) at (10,0);
					\coordinate (v3) at (15,5);
					\coordinate (v4) at (5,5);
					\coordinate (v5) at (0,8);
					\coordinate (v6) at (10,10);
					\coordinate (v7) at (15,15);
					\coordinate (v8) at (5,15);
					
					\draw (v1)--(v2);
					\draw (v2)--(v3);
					\draw[dashed] (v3)--(v4);
					\draw[dashed] (v4)--(v1);
					\draw (v5)--(v6);
					\draw (v6)--(v7);
					\draw (v7)--(v8);
					\draw (v8)--(v5);
					\draw (v1)--(v5);
					\draw (v2)--(v6);
					\draw (v3)--(v7);
					\draw[dashed] (v4)--(v8);
					\draw (v6)--(v8);
					
					\draw [fill, black] (v4) circle (0.3);
					\draw [fill, black] (v3) circle (0.3);
					\draw [fill, black] (v7) circle (0.3);
					\draw [fill, black] (v5) circle (0.3);
			\end{tikzpicture}
			\qquad
			\begin{tikzpicture}[scale=0.2]
					\coordinate (v1) at (0,0);
					\coordinate (v2) at (10,0);
					\coordinate (v3) at (15,5);
					\coordinate (v32) at (14,4);
					\coordinate (v34) at (12,5);
					\coordinate (v37) at (15,8);
					\coordinate (v4) at (5,5);
					\coordinate (v43) at (7.5,5);
					\coordinate (v41) at (3,3);
					\coordinate (v48) at (5,7.5);
					\coordinate (v5) at (0,8);
					\coordinate (v51) at (0,6);
					\coordinate (v56) at (4.25,8.85);
					\coordinate (v58) at (2.14,11);
					\coordinate (v6) at (10,10);
					\coordinate (v7) at (15,15);
					\coordinate (v73) at (15,12);
					\coordinate (v76) at (12.5,12.5);
					\coordinate (v78) at (12,15);
					\coordinate (v8) at (5,15);
					
					\draw (v1)--(v2);
					\draw (v2)--(v32);
					\draw[dashed] (v34)--(v43);
					\draw[dashed] (v41)--(v1);
					\draw (v56)--(v6);
					\draw (v6)--(v76);
					\draw (v78)--(v8);
					\draw (v8)--(v58);
					\draw (v1)--(v51);
					\draw (v2)--(v6);
					\draw (v37)--(v73);
					\draw[dashed] (v48)--(v8);
					\draw (v6)--(v8);
					\draw[dashed] (v32)--(v34);
					\draw[dashed] (v34)--(v37);
					\draw (v37)--(v32);
					\draw[dashed] (v41)--(v43);
					\draw[dashed] (v43)--(v48);
					\draw[dashed] (v48)--(v41);
					\draw (v76)--(v78);
					\draw (v78)--(v73);
					\draw (v73)--(v76);
					\draw (v51)--(v58);
					\draw (v58)--(v56);
					\draw (v56)--(v51);
					\draw [fill, black] (v51) circle (0.3);
					\draw [fill, black] (v41) circle (0.3);
					\draw [fill, black] (v76) circle (0.3);
				\end{tikzpicture}
	\caption{Constructing the polytope by cutting of vertices (marked with dots).}
	\end{figure}
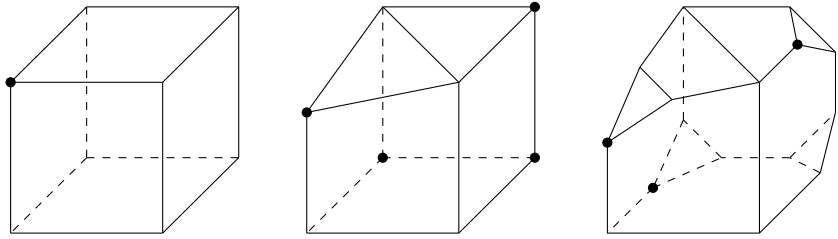
We obtain the polytope below, in which there is a repetitive edge walk from $\vev^{(1)}$ to $\vev^{(2)}$ of length four. It is easy to check that any other edge walk from $\vev^{(1)}$ to $\vev^{(2)}$ has length at least five.
	\begin{figure}[H]
		\centering
		\begin{tikzpicture}[scale=0.3]
					\coordinate (v1) at (0,0);
					\coordinate (v2) at (10,0);
					\coordinate (v3) at (15,5);
					\coordinate (v32) at (14,4);
					\coordinate (v34) at (12,5);
					\coordinate (v37) at (15,8);
					\coordinate (v4) at (5,5);
					\coordinate (v43) at (7.5,5);
					\coordinate (v41) at (3,3);
					\coordinate (v4134) at (5,4);
					\coordinate (v411) at (2.3,2.3);
					\coordinate (v4148) at (4,5.25);
					\coordinate (v48) at (5,7.5);
					\coordinate (v5) at (0,8);
					\coordinate (v51) at (0,6);
					\coordinate (v511) at (0,4.2);
					\coordinate (v5156) at (2.55,7.31);
					\coordinate (v5158) at (1.07,8.5);
					\coordinate (v56) at (4.25,8.85);
					\coordinate (v58) at (2.14,11);
					\coordinate (v6) at (10,10);
					\coordinate (v7) at (15,15);
					\coordinate (v73) at (15,12);
					\coordinate (v76) at (12.5,12.5);
					\coordinate (v766) at (11.3,11.3);
					\coordinate (v7673) at (13.5,12.3);
					\coordinate (v7678) at (12.3,13.5);
					\coordinate (v78) at (12,15);
					\coordinate (v8) at (5,15);
					
					\draw[dashed] (v4134)--(v411);
					\draw[dashed] (v411)--(v4148);
					\draw[dashed] (v4148)--(v4134);
					\draw (v766)--(v7673);
					\draw (v7673)--(v7678);
					\draw (v7678)--(v766);
					\draw (v511)--(v5156);
					\draw (v5156)--(v5158);
					\draw (v5158)--(v511);
					
					\draw (v1)--(v2);
					\draw (v2)--(v32);
					\draw[dashed] (v34)--(v43);
					\draw[dashed] (v411)--(v1);
					\draw (v56)--(v6);
					\draw (v6)--(v766);
					\draw (v78)--(v8);
					\draw (v8)--(v58);
					\draw (v1)--(v511);
					\draw (v2)--(v6);
					\draw (v37)--(v73);
					\draw[dashed] (v48)--(v8);
					\draw (v6)--(v8);
					\draw[dashed] (v32)--(v34);
					\draw[dashed] (v34)--(v37);
					\draw (v37)--(v32);
					\draw[dashed] (v4134)--(v43);
					\draw[dashed] (v43)--(v48);
					\draw[dashed] (v48)--(v4148);
					\draw (v7678)--(v78);
					\draw (v78)--(v73);
					\draw (v73)--(v7673);
					\draw (v5158)--(v58);
					\draw (v58)--(v56);
					\draw (v56)--(v5156);
					\node[below ] at (v1) {$\vev^{(1)}$};
					\node[above right] at (v78) {$\vev^{(2)}$};
					\draw[line width= 1.5, red, ->] (v1)--(v2);
					\draw[line width= 1.5, red, ->] (v2)--(v6);
					\draw[line width= 1.5, red, ->] (v6)--(v8);
					\draw[line width= 1.5, red, ->] (v8)--(v78);										
			\end{tikzpicture}
	\caption{Unique optimal edge walk from $\vev^{(1)}$ to $\vev^{(2)}$.}
	\end{figure}
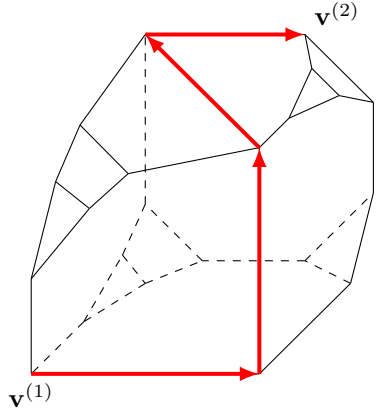
\end{proof}

Backwards or repetitive circuit walks also can be shorter than their respective counterparts. First we exhibit a polytope to see this for backwards walks.

\begin{lemma}[$\CD_{fmb}>\CD_{fm}$]\label{ex: backwards is stronger}
For $n=2$, there is a polytope with a pair of vertices for which every optimal feasible maximal circuit walk is backwards. Hence the distances $\CD_{fmb}$ and $\CD_{fm}$ differ in this case.
\end{lemma}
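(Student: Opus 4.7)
My approach is to exhibit a convex polygon $P\subset\mathbb{R}^2$ together with two vertices $\vev^{(1)},\vev^{(2)}$ for which every optimal feasible maximal circuit walk from $\vev^{(1)}$ to $\vev^{(2)}$ uses both a circuit $\veg$ and its negative $-\veg$, while any non-backwards feasible maximal walk is strictly longer. Recall that in dimension two the circuit set coincides with the set of edge directions of $P$ up to sign, so the entire analysis is governed by the polygon's edge geometry.

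I would design $P$ so that $\vev^{(2)}-\vev^{(1)}$ is not itself a circuit direction (ruling out a one-step walk), and so that from $\vev^{(1)}$ the maximal extensions along each feasible circuit direction end at boundary points from which no further single max step reaches $\vev^{(2)}$. The intended optimal walk then has the form
\[
\vev^{(1)} \xrightarrow{\veg} p \xrightarrow{\veh} q \xrightarrow{-\veg} \vev^{(2)},
\]
where the first step overshoots the ``natural'' turnaround point of a hypothetical shorter walk, a lateral step along $\veh$ moves us onto a new boundary segment, and finally $-\veg$ lands precisely at $\vev^{(2)}$. I would verify maximality of each of the three steps by computing the first boundary hit along the corresponding ray from the current point; feasibility then follows automatically.

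To prove optimality, I would perform a finite case analysis. In two dimensions the circuit set is finite, and from any fixed point the map sending each feasible circuit direction to its max-step endpoint has finite image. I can therefore enumerate all feasible maximal walks of length one and two from $\vev^{(1)}$ and check that none reaches $\vev^{(2)}$, so the backwards walk above is optimal among all walks. Next I enumerate the length-three non-backwards walks (triples of feasible circuit directions containing no opposite pair) and verify that none reaches $\vev^{(2)}$. The branching factor at each step equals the number of feasible circuit directions at the current point, which is small for a polygon with few edges, so the enumeration is tractable.

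The main technical obstacle is the polygon construction itself: we need simultaneously that (a) the $\veg$-ray from $\vev^{(1)}$ leaves $P$ strictly past the point at which a shorter non-backwards walk would turn around, (b) from the overshoot point $p$ the $\veh$-ray terminates at $q$ in such a way that $-\veg$ from $q$ hits $\vev^{(2)}$ exactly, and (c) every alternative triple of non-opposite feasible circuits fails either maximality or feasibility when chained into a walk from $\vev^{(1)}$ to $\vev^{(2)}$. Balancing (a)--(c) typically calls for a hexagon or heptagon whose vertex coordinates are hand-tuned, in the spirit of the examples used in Lemmas \ref{ex: edge-directions is stronger} and \ref{ex: arbitrary length steps is stronger}.
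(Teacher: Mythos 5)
Your outlined strategy matches the paper's at the structural level: fix a concrete polygon, exhibit a length-three backwards feasible maximal walk, and enumerate first and second maximal steps to show no other walk of length at most three reaches the target. But this lemma is a pure existence statement, and its entire content lies in actually producing a polygon with the required properties and carrying out that enumeration. Your proposal never does either. Phrases like ``I would design $P$ so that \dots'' and ``balancing (a)--(c) typically calls for a hexagon or heptagon whose vertex coordinates are hand-tuned'' describe what a proof would need, but supply neither coordinates nor verification, so nothing is established.

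The gap is not merely cosmetic, because the construction is genuinely delicate. The paper's witness is an $11$-vertex polygon in which seven vertices are clustered near the target $\vev^{(5)}=(27\tfrac{27}{100},0)$, with non-integer coordinates chosen so that every maximal step from $\vev^{(1)}=(19,9)$ that lands on the far edges hits an integer $x$-coordinate, after which no circuit direction can reach $\vev^{(5)}$; only the backwards chain $-\binom{1}{0},\,-\binom{1}{1},\,\binom{1}{0}$ through $(9,9)$ and $(0,0)$ succeeds in three steps. This fine structure (a dense vertex cluster plus an integrality obstruction) is exactly what forces every non-backwards competitor to take four or more steps, and nothing in your sketch guarantees that a hexagon or heptagon can reproduce it. In short: identify the polygon, list its circuits, exhibit the three-step backwards walk, and then actually perform the finite case check ruling out every feasible maximal walk of length at most three that avoids an opposite pair. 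Until that is done, the lemma is not proved.
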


\begin{proof}
We consider the polytope on $11$ vertices depicted in Figure \ref{fig: backwards walk}; the lower subfigure is a zoomed-in view on the right part of the polygon. The edge directions are given by 
$$\binom 11, \binom 10, \binom{-1}{1}, \binom{3}{-10},\binom{2}{-10},\binom{1}{-10},\binom{1}{10},\binom{2}{10},\binom{3}{10}.$$There is a feasible maximal circuit walk of length three from $\vev^{(1)}$ to $\vev^{(5)}$ that is backwards.

	\begin{figure}[H]
		\centering
			\begin{tikzpicture}[scale=0.3]
					\coordinate (v1) at (19,9);
					\coordinate (v2) at (27,1);
					\coordinate (v3) at (27.24,0.2);
					\coordinate (v4) at (27.26,0.1);
					\coordinate (v5) at (27.27,0);
					\coordinate (v6) at (27.26,-0.1);
					\coordinate (v62) at (27.24,-0.2);
					\coordinate (v7) at (27,-1);
					\coordinate (v8) at (1,-1);
					\coordinate (v9) at (0,0);
					\coordinate (v10) at (9,9);
					
					\draw [fill, black] (v10) circle (0.05cm);
					\draw [fill, black] (v1) circle (0.05cm);
					\draw [fill, black] (v2) circle (0.05cm);
					\draw [fill, black] (v3) circle (0.05cm);
					\draw [fill, black] (v4) circle (0.05cm);
					\draw [fill, black] (v5) circle (0.05cm);
					\draw [fill, black] (v6) circle (0.05cm);
					\draw [fill, black] (v62) circle (0.05cm);
					\draw [fill, black] (v7) circle (0.05cm);
					\draw [fill, black] (v8) circle (0.05cm);
					\draw [fill, black] (v9) circle (0.05cm);
					\node[above right] at (v1) {$\vev^{(1)}=(19,9)$};
					\node[above right] at (v2) {$\vev^{(2)}=(27,1)$};
					\node[right] at (v5) {$\vev^{(5)}=(27\frac{27}{100}, 0)$};
					\node[below] at (v7) {$\vev^{(8)}=(27,-1)$};
					\node[left] at (v9) {$(0,0)=\vev^{(10)}$};
					\node[below ] at (v8) {$(1,-1)=\vev^{(9)}$};
					\node[above left ] at (v10) {$(9,9)=\vev^{(11)}$};
					\draw[dotted] (27,0.1) circle (1.5cm);					  
					\draw[black] (v1)--(v2)--(v3)--(v4)--(v5)--(v6)--(v62)--(v7)--(v8)--(v9)--(v10)--(v1);
					\draw[line width = 1.5, red, ->] (v1)--(v10);
					\draw[line width = 1.5, red, ->] (v10)--(v9);
					\draw[line width = 1.5, red, ->] (v9)--(v5);
			\end{tikzpicture} 
			\hspace{1.5cm}
			\begin{tikzpicture}[scale=2]
					\coordinate (v1) at (26.5,1.5);
					\coordinate (v2) at (27,1);
					\coordinate (v3) at (27.24,0.2);
					\coordinate (v3b) at (27.5,0.4);
					\coordinate (v4) at (27.26,0.1);
					\coordinate (v4b) at (27,0.2);
					\coordinate (v5) at (27.27,0);
					\coordinate (v6) at (27.26,-0.1);
					\coordinate (v6b) at (27,-0.2);
					\coordinate (v62) at (27.24,-0.2);
					\coordinate (v62b) at (27.5,-0.4);
					\coordinate (v7) at (27,-1);
					\coordinate (v8) at (26,-1);
					\draw[loosely dotted]  (27,0.1) circle (1.5cm);

					\node[above right] at (v2) {$\vev^{(2)}=(27,1)$};
					\node[right] at (v3b) {$\vev^{(3)}=(27\frac{24}{100},\frac{2}{10})$};
					\draw[line width = 0.2 ] (v3b)--(v3);
					\node[left] at (v4b) {$(27\frac{26}{100},\frac{1}{10})=\vev^{(4)}$};
					\draw[line width = 0.2 ] (v4b)--(v4);
					\node[right] at (v5) {$\vev^{(5)}=(27\frac{27}{100}, 0)$};
					\node[left] at (v6b) {$(27\frac{26}{100},-\frac{1}{10})=\vev^{(6)}$};
					\draw[line width = 0.2 ] (v6b)--(v6);
					\node[right] at (v62b) {$\vev^{(7)}=(27\frac{24}{100},-\frac{2}{10})$};
					\draw[line width = 0.2 ] (v62b)--(v62);
					\node[right] at (v7) {$\vev^{(8)}=(27,1)$};

					\draw [fill, black] (v2) circle (0.02cm);
					\draw [fill, black] (v3) circle (0.02cm);
					\draw [fill, black] (v4) circle (0.02cm);
					\draw [fill, black] (v5) circle (0.04cm);
					\draw [fill, black] (v6) circle (0.02cm);
					\draw [fill, black] (v62) circle (0.02cm);
					\draw [fill, black] (v7) circle (0.02cm);
					\draw (v1)--(v2)--(v3)--(v4)--(v5)--(v6)--(v62)--(v7)--(v8);		
					\draw[line width = 1.5, red, ->] (25.5,0)--(v5);
			\end{tikzpicture}
	\caption{A polytope with a feasible maximal backwards circuit walk of length three.}
	\label{fig: backwards walk}
	\end{figure}
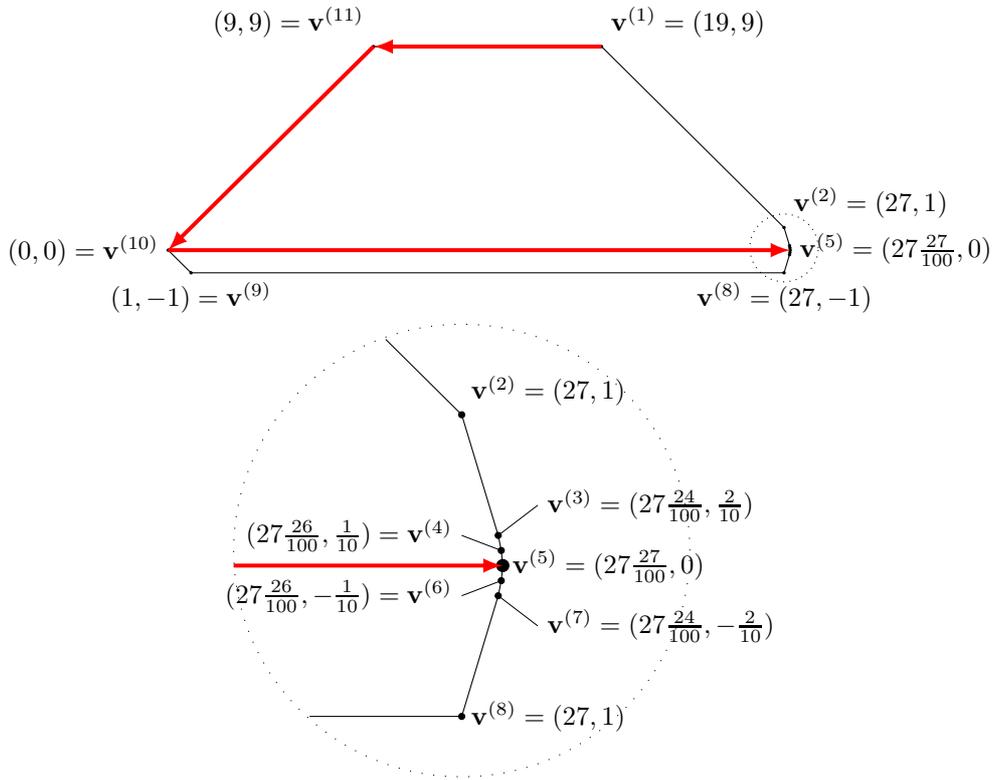	

Every other feasible maximal circuit walk from $\vev^{(1)}$ to $\vev^{(5)}$ has length at least four. 
To see this, we illustrate all possible combination of first (dashed) and second feasible maximal circuit steps in Figure \ref{Fig: first two steps CD_efmb}. From none of these second step points we can reach $\vev^{(5)}$ in only one additional step, except from the point $\vev^{(10)}$ in the top left picture. But this is the backwards circuit walk as depicted in Figure \ref{fig: backwards walk}. Observe that all second steps that end in the edge $(\vev^{(11)}, \vev^{(1)})$ have coordinates $(x,9)^T$ for an \emph{integral} $x$, in particular we cannot go to $\vev^{(5)}=(27\frac{27}{100}, 0)^T$ by applying the circuit $(-1,1)^T$ at these points. The final sketch is a zoomed-in view on the bottom right picture. It illustrates all possible second steps after applying $(-1,1)^T$ at $\vev^{(1)}$. 
\noindent	\begin{figure}[H]
		\centering
			\begin{tikzpicture}[scale=0.15]
					\coordinate (v1) at (19,9);
					\coordinate (v2) at (27,1);
					\coordinate (v3) at (27.24,0.2);
					\coordinate (v4) at (27.26,0.1);
					\coordinate (v5) at (27.27,0);
					\coordinate (v6) at (27.26,-0.1);
					\coordinate (v62) at (27.24,-0.2);
					\coordinate (v7) at (27,-1);
					\coordinate (v8) at (1,-1);
					\coordinate (v9) at (0,0);
					\coordinate (v10) at (9,9);
					
					\draw [fill, black] (v10) circle (0.05cm);
					\draw [fill, black] (v1) circle (0.05cm);
					\draw [fill, black] (v2) circle (0.05cm);
					\draw [fill, black] (v3) circle (0.05cm);
					\draw [fill, black] (v4) circle (0.05cm);
					\draw [fill, black] (v5) circle (0.2cm);
					\draw [fill, black] (v6) circle (0.05cm);
					\draw [fill, black] (v62) circle (0.05cm);
					\draw [fill, black] (v7) circle (0.05cm);
					\draw [fill, black] (v8) circle (0.05cm);
					\draw [fill, black] (v9) circle (0.05cm);
					\draw[black] (v1)--(v2)--(v3)--(v4)--(v5)--(v6)--(v62)--(v7)--(v8)--(v9)--(v10)--(v1);
					\draw[dashed, line width = 1.5, red, ->] (v1)--(v10);
					\draw[ red, ->] (v10)--(v9);
					\draw[ red, ->] (v10)--(8,-1);
					\draw[red, ->] (v10)--(7, -1);
					\draw[red, ->] (v10)--(6, -1);
					\draw[red, ->] (v10)--(10,-1);				
					\draw[red, ->] (v10)--(11,-1);				
					\draw[red, ->] (v10)--(12,-1);				
					\draw[red, ->] (v10)--(19,-1);									
			\end{tikzpicture} 
			\;
			\begin{tikzpicture}[scale=0.15]
					\coordinate (v1) at (19,9);
					\coordinate (v2) at (27,1);
					\coordinate (v3) at (27.24,0.2);
					\coordinate (v4) at (27.26,0.1);
					\coordinate (v5) at (27.27,0);
					\coordinate (v6) at (27.26,-0.1);
					\coordinate (v62) at (27.24,-0.2);
					\coordinate (v7) at (27,-1);
					\coordinate (v8) at (1,-1);
					\coordinate (v9) at (0,0);
					\coordinate (v10) at (9,9);
					
					\draw [fill, black] (v10) circle (0.05cm);
					\draw [fill, black] (v1) circle (0.05cm);
					\draw [fill, black] (v2) circle (0.05cm);
					\draw [fill, black] (v3) circle (0.05cm);
					\draw [fill, black] (v4) circle (0.05cm);
					\draw [fill, black] (v5) circle (0.2cm);
					\draw [fill, black] (v6) circle (0.05cm);
					\draw [fill, black] (v62) circle (0.05cm);
					\draw [fill, black] (v7) circle (0.05cm);
					\draw [fill, black] (v8) circle (0.05cm);
					\draw [fill, black] (v9) circle (0.05cm);
					\draw[black] (v1)--(v2)--(v3)--(v4)--(v5)--(v6)--(v62)--(v7)--(v8)--(v9)--(v10)--(v1);
					\draw[dashed, line width = 1.5, red, ->] (v1)--(9,-1);
					\draw[red, ->] (9,-1)--(v7);
					\draw[red, ->] (9,-1)--(v8);
					\draw[red, ->] (9,-1)--(10,9);
					\draw[red, ->] (9,-1)--(11,9);
					\draw[red, ->] (9,-1)--(12,9);
					\draw[red, ->] (9,-1)--(8.09,8.09); 
					\draw[red, ->] (9,-1)--(7.33,7.33); 
					\draw[red, ->] (9,-1)--(6.69, 6.69); 
					\draw[red, ->] (9,-1)--(4,4);
			\end{tikzpicture} 
			\;
			\begin{tikzpicture}[scale=0.15]
					\coordinate (v1) at (19,9);
					\coordinate (v2) at (27,1);
					\coordinate (v3) at (27.24,0.2);
					\coordinate (v4) at (27.26,0.1);
					\coordinate (v5) at (27.27,0);
					\coordinate (v6) at (27.26,-0.1);
					\coordinate (v62) at (27.24,-0.2);
					\coordinate (v7) at (27,-1);
					\coordinate (v8) at (1,-1);
					\coordinate (v9) at (0,0);
					\coordinate (v10) at (9,9);
					
					\draw [fill, black] (v10) circle (0.05cm);
					\draw [fill, black] (v1) circle (0.05cm);
					\draw [fill, black] (v2) circle (0.05cm);
					\draw [fill, black] (v3) circle (0.05cm);
					\draw [fill, black] (v4) circle (0.05cm);
					\draw [fill, black] (v5) circle (0.2cm);
					\draw [fill, black] (v6) circle (0.05cm);
					\draw [fill, black] (v62) circle (0.05cm);
					\draw [fill, black] (v7) circle (0.05cm);
					\draw [fill, black] (v8) circle (0.05cm);
					\draw [fill, black] (v9) circle (0.05cm);
					\draw[black] (v1)--(v2)--(v3)--(v4)--(v5)--(v6)--(v62)--(v7)--(v8)--(v9)--(v10)--(v1);
					\draw[dashed, line width = 1.5, red, ->] (v1)--(16,-1);
					\draw[red, ->] (16,-1)--(17,9);				
					\draw[red, ->] (16,-1)--(18,9);				
					\draw[red, ->] (16,-1)--(15,9);				
					\draw[red, ->] (16,-1)--(14,9);				
					\draw[red, ->] (16,-1)--(13,9);				
					\draw[red, ->] (16,-1)--(7.5,7.5);				
					\draw[red, ->] (16,-1)--(v8);				
					\draw[red, ->] (16,-1)--(v7);				
					\draw[red, ->] (16,-1)--(22.5,5.5);				
			\end{tikzpicture} 
	\end{figure}
	\begin{figure}[H]
			\centering
		\begin{tikzpicture}[scale=0.15]
					\coordinate (v1) at (19,9);
					\coordinate (v2) at (27,1);
					\coordinate (v3) at (27.24,0.2);
					\coordinate (v4) at (27.26,0.1);
					\coordinate (v5) at (27.27,0);
					\coordinate (v6) at (27.26,-0.1);
					\coordinate (v62) at (27.24,-0.2);
					\coordinate (v7) at (27,-1);
					\coordinate (v8) at (1,-1);
					\coordinate (v9) at (0,0);
					\coordinate (v10) at (9,9);
					
					\draw [fill, black] (v10) circle (0.05cm);
					\draw [fill, black] (v1) circle (0.05cm);
					\draw [fill, black] (v2) circle (0.05cm);
					\draw [fill, black] (v3) circle (0.05cm);
					\draw [fill, black] (v4) circle (0.05cm);
					\draw [fill, black] (v5) circle (0.2cm);
					\draw [fill, black] (v6) circle (0.05cm);
					\draw [fill, black] (v62) circle (0.05cm);
					\draw [fill, black] (v7) circle (0.05cm);
					\draw [fill, black] (v8) circle (0.05cm);
					\draw [fill, black] (v9) circle (0.05cm);
					\draw[black] (v1)--(v2)--(v3)--(v4)--(v5)--(v6)--(v62)--(v7)--(v8)--(v9)--(v10)--(v1);
					\draw[dashed, line width = 1.5, red, ->] (v1)--(17,-1);
					\draw[red, ->] (17,-1)--(16,9);				
					\draw[red, ->] (17,-1)--(15,9);				
					\draw[red, ->] (17,-1)--(14,9);				
					\draw[red, ->] (17,-1)--(18,9);				
					\draw[red, ->] (17,-1)--(8,8);				
					\draw[red, ->] (17,-1)--(23,5);				
					\draw[red, ->] (17,-1)--(19.76923077,8.230769231);				
					\draw[red, ->] (17,-1)--(v8);				
					\draw[red, ->] (17,-1)--(v7);				
			\end{tikzpicture} 
			\;
			\begin{tikzpicture}[scale=0.15]
					\coordinate (v1) at (19,9);
					\coordinate (v2) at (27,1);
					\coordinate (v3) at (27.24,0.2);
					\coordinate (v4) at (27.26,0.1);
					\coordinate (v5) at (27.27,0);
					\coordinate (v6) at (27.26,-0.1);
					\coordinate (v62) at (27.24,-0.2);
					\coordinate (v7) at (27,-1);
					\coordinate (v8) at (1,-1);
					\coordinate (v9) at (0,0);
					\coordinate (v10) at (9,9);
					
					\draw [fill, black] (v10) circle (0.05cm);
					\draw [fill, black] (v1) circle (0.05cm);
					\draw [fill, black] (v2) circle (0.05cm);
					\draw [fill, black] (v3) circle (0.05cm);
					\draw [fill, black] (v4) circle (0.05cm);
					\draw [fill, black] (v5) circle (0.2cm);
					\draw [fill, black] (v6) circle (0.05cm);
					\draw [fill, black] (v62) circle (0.05cm);
					\draw [fill, black] (v7) circle (0.05cm);
					\draw [fill, black] (v8) circle (0.05cm);
					\draw [fill, black] (v9) circle (0.05cm);
					\draw[black] (v1)--(v2)--(v3)--(v4)--(v5)--(v6)--(v62)--(v7)--(v8)--(v9)--(v10)--(v1);
					\draw[dashed, line width = 1.5, red, ->] (v1)--(18,-1);
					\draw[red, ->] (18,-1)--(17,9);				
					\draw[red, ->] (18,-1)--(16,9);				
					\draw[red, ->] (18,-1)--(15,9);				
					\draw[red, ->] (18,-1)--(19.8333,8.1667);				
					\draw[red, ->] (18,-1)--(20.53846154,7.461538462);				

					\draw[red, ->] (18,-1)--(v7);				
					\draw[red, ->] (18,-1)--(v8);				
					\draw[red, ->] (18,-1)--(8.5,8.5);				
					\draw[red, ->] (18,-1)--(23.5,4.5);				
			\end{tikzpicture} 
			\;
			\begin{tikzpicture}[scale=0.15]
					\coordinate (v1) at (19,9);
					\coordinate (v2) at (27,1);
					\coordinate (v3) at (27.24,0.2);
					\coordinate (v4) at (27.26,0.1);
					\coordinate (v5) at (27.27,0);
					\coordinate (v6) at (27.26,-0.1);
					\coordinate (v62) at (27.24,-0.2);
					\coordinate (v7) at (27,-1);
					\coordinate (v8) at (1,-1);
					\coordinate (v9) at (0,0);
					\coordinate (v10) at (9,9);
					
					\draw [fill, black] (v10) circle (0.05cm);
					\draw [fill, black] (v1) circle (0.05cm);
					\draw [fill, black] (v2) circle (0.05cm);
					\draw [fill, black] (v3) circle (0.05cm);
					\draw [fill, black] (v4) circle (0.05cm);
					\draw [fill, black] (v5) circle (0.2cm);
					\draw [fill, black] (v6) circle (0.05cm);
					\draw [fill, black] (v62) circle (0.05cm);
					\draw [fill, black] (v7) circle (0.05cm);
					\draw [fill, black] (v8) circle (0.05cm);
					\draw [fill, black] (v9) circle (0.05cm);
					\draw[black] (v1)--(v2)--(v3)--(v4)--(v5)--(v6)--(v62)--(v7)--(v8)--(v9)--(v10)--(v1);
					\draw[dashed, line width = 1.5, red, ->] (v1)--(20,-1);
					\draw[red, ->] (20,-1)--(18,9);				
					\draw[red, ->] (20,-1)--(17,9);				
					\draw[red, ->] (20,-1)--(20.81818,7.18181);				
					\draw[red, ->] (20,-1)--(21.6,6.5);				
					\draw[red, ->] (20,-1)--(22.07692308,5.923076923);				
					\draw[red, ->] (20,-1)--(24.5,3.5);				
					\draw[red, ->] (20,-1)--(10,9);				
					\draw[red, ->] (20,-1)--(v8);				
					\draw[red, ->] (20,-1)--(v7);				
			\end{tikzpicture} 
	\end{figure}
	\begin{figure}[H]
		\centering
			\begin{tikzpicture}[scale=0.15]
					\coordinate (v1) at (19,9);
					\coordinate (v2) at (27,1);
					\coordinate (v3) at (27.24,0.2);
					\coordinate (v4) at (27.26,0.1);
					\coordinate (v5) at (27.27,0);
					\coordinate (v6) at (27.26,-0.1);
					\coordinate (v62) at (27.24,-0.2);
					\coordinate (v7) at (27,-1);
					\coordinate (v8) at (1,-1);
					\coordinate (v9) at (0,0);
					\coordinate (v10) at (9,9);
					
					\draw [fill, black] (v10) circle (0.05cm);
					\draw [fill, black] (v1) circle (0.05cm);
					\draw [fill, black] (v2) circle (0.05cm);
					\draw [fill, black] (v3) circle (0.05cm);
					\draw [fill, black] (v4) circle (0.05cm);
					\draw [fill, black] (v5) circle (0.2cm);
					\draw [fill, black] (v6) circle (0.05cm);
					\draw [fill, black] (v62) circle (0.05cm);
					\draw [fill, black] (v7) circle (0.05cm);
					\draw [fill, black] (v8) circle (0.05cm);
					\draw [fill, black] (v9) circle (0.05cm);
					\draw[black] (v1)--(v2)--(v3)--(v4)--(v5)--(v6)--(v62)--(v7)--(v8)--(v9)--(v10)--(v1);
					\draw[dashed, line width = 1.5, red, ->] (v1)--(21,-1);
					\draw[red, ->] (21,-1)--(18,9);				
					\draw[red, ->] (21,-1)--(20.111,7.889);				
					\draw[red, ->] (21,-1)--(22.84615385, 5.153846154);				
					\draw[red, ->] (21,-1)--(21.7272, 6.2727);				
					\draw[red, ->] (21,-1)--(22.333,5.667);				
					\draw[red, ->] (21,-1)--(v7);				
					\draw[red, ->] (21,-1)--(v8);				
					\draw[red, ->] (21,-1)--(25,3);				
					\draw[red, ->] (21,-1)--(11,9);				
			\end{tikzpicture} 
			\;
			\begin{tikzpicture}[scale=0.15]
					\coordinate (v1) at (19,9);
					\coordinate (v2) at (27,1);
					\coordinate (v3) at (27.24,0.2);
					\coordinate (v4) at (27.26,0.1);
					\coordinate (v5) at (27.27,0);
					\coordinate (v6) at (27.26,-0.1);
					\coordinate (v62) at (27.24,-0.2);
					\coordinate (v7) at (27,-1);
					\coordinate (v8) at (1,-1);
					\coordinate (v9) at (0,0);
					\coordinate (v10) at (9,9);
					
					\draw [fill, black] (v10) circle (0.05cm);
					\draw [fill, black] (v1) circle (0.05cm);
					\draw [fill, black] (v2) circle (0.05cm);
					\draw [fill, black] (v3) circle (0.05cm);
					\draw [fill, black] (v4) circle (0.05cm);
					\draw [fill, black] (v5) circle (0.2cm);
					\draw [fill, black] (v6) circle (0.05cm);
					\draw [fill, black] (v62) circle (0.05cm);
					\draw [fill, black] (v7) circle (0.05cm);
					\draw [fill, black] (v8) circle (0.05cm);
					\draw [fill, black] (v9) circle (0.05cm);
					\draw[black] (v1)--(v2)--(v3)--(v4)--(v5)--(v6)--(v62)--(v7)--(v8)--(v9)--(v10)--(v1);
					\draw[dashed, line width = 1.5, red, ->] (v1)--(22,-1);
					\draw[red, ->] (22,-1)--(v7);				
					\draw[red, ->] (22,-1)--(v8);				
					\draw[red, ->] (22,-1)--(25.5,2.5);				
					\draw[red, ->] (22,-1)--(12,9);				
					\draw[red, ->] (22,-1)--(21.222,6.778);				
					\draw[red, ->] (22,-1)--(20.25,7.75);				
					\draw[red, ->] (22,-1)--(22.6363,5.3636);				
					\draw[red, ->] (22,-1)--(23.1667,4.8333);				
					\draw[red, ->] (22,-1)--(23.61538462, 4.384615385);				
			\end{tikzpicture} 
			\;
			\begin{tikzpicture}[scale=0.15]
					\coordinate (v1) at (19,9);
					\coordinate (v2) at (27,1);
					\coordinate (v3) at (27.24,0.2);
					\coordinate (v4) at (27.26,0.1);
					\coordinate (v5) at (27.27,0);
					\coordinate (v6) at (27.26,-0.1);
					\coordinate (v62) at (27.24,-0.2);
					\coordinate (v7) at (27,-1);
					\coordinate (v8) at (1,-1);
					\coordinate (v9) at (0,0);
					\coordinate (v10) at (9,9);
					
					\draw [fill, black] (v10) circle (0.05cm);
					\draw [fill, black] (v1) circle (0.05cm);
					\draw [fill, black] (v2) circle (0.05cm);
					\draw [fill, black] (v3) circle (0.05cm);
					\draw [fill, black] (v4) circle (0.05cm);
					\draw [fill, black] (v5) circle (0.2cm);
					\draw [fill, black] (v6) circle (0.05cm);
					\draw [fill, black] (v62) circle (0.05cm);
					\draw [fill, black] (v7) circle (0.05cm);
					\draw [fill, black] (v8) circle (0.05cm);
					\draw [fill, black] (v9) circle (0.05cm);
					\draw[black] (v1)--(v2)--(v3)--(v4)--(v5)--(v6)--(v62)--(v7)--(v8)--(v9)--(v10)--(v1);
					\draw[dashed, line width = 1.5, red, ->] (v1)--(v2); 
					\draw[red, ->] (v2)--(1,1);				
					\draw[red, ->] (v2)--(25,-1);				
					\draw[dotted] (27,0.1) circle (1.5cm);					  
			\end{tikzpicture} 
	\end{figure}
	\begin{figure}[H]
		\centering
			\begin{tikzpicture}[scale=1.5]
					\coordinate (v1) at (26.5,1.5);
					\coordinate (v2) at (27,1);
					\coordinate (v3) at (27.24,0.2);
					\coordinate (v3b) at (27.5,0.4);
					\coordinate (v4) at (27.26,0.1);
					\coordinate (v4b) at (27,0.2);
					\coordinate (v5) at (27.27,0);
					\coordinate (v6) at (27.26,-0.1);
					\coordinate (v6b) at (27,-0.2);
					\coordinate (v62) at (27.24,-0.2);
					\coordinate (v62b) at (27.5,-0.4);
					\coordinate (v7) at (27,-1);
					\coordinate (v8) at (26,-1);
					\draw[loosely dotted]  (27,0.1) circle (1.5cm);

					\node[right] at (v5) {$\vev^{(5)}$};

					\draw [fill, black] (v2) circle (0.02cm);
					\draw [fill, black] (v3) circle (0.02cm);
					\draw [fill, black] (v4) circle (0.02cm);
					\draw [fill, black] (v5) circle (0.04cm);
					\draw [fill, black] (v6) circle (0.02cm);
					\draw [fill, black] (v62) circle (0.02cm);
					\draw [fill, black] (v7) circle (0.02cm);
					\draw (v1)--(v2)--(v3)--(v4)--(v5)--(v6)--(v62)--(v7)--(v8);		
					\draw[dashed, line width = 1.5, red, ->] (v1)--(v2);
					\draw[red, ->] (v2)--(v3);				
					\draw[red, ->] (v2)--(26.8,-1);
					\draw[red, ->] (v2)--(26.6,-1);
					\draw[red, ->] (v2)--(26.4,-1);
					\draw[red, ->] (v2)--(27.15,-0.5);
					\draw[red, ->] (v2)--(27.24,-0.2);
					\draw[line width = 0.1] (27.24,0.2)-- (27.8,0.2);
					\draw[line width = 0.1] (27.24,-0.2)-- (27.8,-0.2);
					\draw[line width = 0.1] (27.15,-0.5)-- (27.5,-0.5);
					\node[right] at (27.8,0.2) {\footnotesize$\vev^{(3)}$};
					\node[right] at (27.8,-0.2) {\footnotesize$\vev^{(7)}$};
					\node[right] at (27.5,-0.5) {\footnotesize$(27\frac {15}{100},-\frac 12)$};	
			\end{tikzpicture}
		\caption{Possible combinations of first and second feasible maximal circuit steps from $\vev^{(1)}$.}
		\label{Fig: first two steps CD_efmb}
		\end{figure}
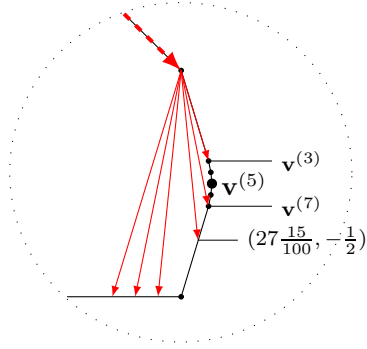
\end{proof}

Similarly, one may obtain a shorter circuit walk by to allowing oneself to use a repeated circuit.

\begin{lemma}[$\CD_{fmr}>\CD_{fm}$]\label{ex: repeated is stronger}
For $n=2$, there is a polytope with a pair of vertices for which every optimal feasible maximal circuit walk is repetitive. Hence the distances $\CD_{fmr}$ and $\CD_{fm}$ differ in this case.
\end{lemma}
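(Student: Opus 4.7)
The plan is to construct a convex polygon $P \subset \R^2$ with two vertices $\vev^{(1)}, \vev^{(2)}$ for which the shortest feasible maximal circuit walk has length three and necessarily repeats a circuit direction, while every non-repetitive feasible maximal circuit walk has length at least four. A first observation is that two consecutive maximal steps along the same circuit direction are impossible by convexity: if $\vey^{(i+1)}=\vey^{(i)}+\alpha\veg$ is a maximal step, then $\vey^{(i+1)}+\epsilon\veg\notin P$ for every $\epsilon>0$, so $\veg$ cannot appear as the direction of the very next step. Consequently any repetitive walk of length three must have the pattern $\veg,\veh,\veg$ for two distinct circuits $\veg,\veh$, and the construction should realize exactly such a walk.

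Following the spirit of Lemma \ref{ex: backwards is stronger}, I would design the polygon with a cluster of closely spaced vertices near $\vev^{(2)}$ so that the exact position of $\vev^{(2)}$ acts as a target that can be hit only by a finely tuned sequence of maximal steps. Concretely, I would fix two circuit directions $\veg,\veh$ and place the vertices of $P$ so that three conditions hold simultaneously: (i) $\vev^{(2)}-\vev^{(1)}$ is not proportional to any circuit direction, ruling out a length-one walk; (ii) for every circuit $\veg_1$ the unique point reached from $\vev^{(1)}$ by a maximal step in direction $\veg_1$ is not connected to $\vev^{(2)}$ by a single additional maximal circuit step, ruling out length-two walks; and (iii) the ordered sequence $\vev^{(1)} \to \vev^{(1)}+\alpha_1\veg \to \vev^{(1)}+\alpha_1\veg+\alpha_2\veh \to \vev^{(2)}$ is feasible and maximal at each step, realizing the desired length-three repetitive walk.

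The principal obstacle is then the exhaustive case analysis, analogous in spirit to Figure \ref{Fig: first two steps CD_efmb}: one must enumerate every non-repetitive pair $(\veg_1,\veg_2)$ of first two circuit directions, compute the unique intermediate point after the corresponding two maximal steps, and verify that no third circuit direction distinct from $\veg_1$ and $\veg_2$ realizes a maximal step landing exactly on $\vev^{(2)}$. The cluster of tightly spaced vertices is the mechanism that makes this work: it furnishes several close-but-distinct maximal landing sites so that the precise rational coordinates of $\vev^{(2)}$ single out the sequence $\veg,\veh,\veg$ as the unique length-three walk reaching $\vev^{(2)}$, while every non-repetitive alternative lands on a nearby but distinct point and therefore requires at least one further step.
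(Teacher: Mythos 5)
Your proposal matches the paper's approach: the paper constructs a nine-vertex polygon with a tight cluster of vertices near the target $\vev^{(5)}=(19\tfrac{9}{40},0)$, realizes the walk $\veg,\veh,\veg$ with $\veg=(1,0)^\T$ and $\veh=(-1,-1)^\T$, and then rules out all non-repetitive length-three walks by exactly the exhaustive enumeration of first-and-second maximal steps that you anticipate. Your preliminary observation that consecutive maximal steps cannot share a direction (hence the pattern must be $\veg,\veh,\veg$) is a correct framing that the paper leaves implicit, but otherwise the construction and verification you outline is the one carried out in the paper.
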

\begin{proof}
We consider the following polytope on nine vertices depicted in Figure \ref{Fig:repetitiveneeded}. Note that there are two edges $e_0$ and $e_7$ with direction $(1,0)^T$, an edge $e_1$ with direction $(1,-1)^T$, an edge $e_6$ with direction $(1,1)^T$ and the edge $e_8$ with direction $(0,1)^T$. Further, in the right part there are four steeper edges: $e_2$ with direction $(1,-4)^T$, $e_3$ with direction $(1,-5)^T$, $e_4$ with direction $(1,5)^T$, $e_5$ with direction $(1,4)^T$.
	\begin{figure}[H]
		\centering
			\begin{tikzpicture}[scale=0.24]
					\coordinate (v0) at (0,10);
					\coordinate (v1) at (10,10);
					\coordinate (v2) at (19,1);
					\coordinate (v3) at (19.125,0.5);
					\coordinate (v4) at (19.225,0);
					\coordinate (v5) at (19.125,-0.5);
					\coordinate (v6) at (19,-1);
					\coordinate (v7) at (10,-10);
					\coordinate (v8) at (0,-10);
					
					\draw [fill, black] (v0) circle (0.05cm);
					\draw [fill, black] (v1) circle (0.05cm);
					\draw [fill, black] (v2) circle (0.05cm);
					\draw [fill, black] (v3) circle (0.05cm);
					\draw [fill, black] (v4) circle (0.05cm);
					\draw [fill, black] (v5) circle (0.05cm);
					\draw [fill, black] (v6) circle (0.05cm);
					\draw [fill, black] (v7) circle (0.05cm);
					\draw [fill, black] (v8) circle (0.05cm);
					\node[above right] at (v1) {$\vev^{(2)}=(10,10)$};
					\node[above right] at (v2) {$\vev^{(3)}$};
					\node[right] at (v4) {$\vev^{(5)}$};
					\node[below right] at (v6) {$\vev^{(7)}$};
					\node[below right] at (v7) {$\vev^{(8)}=(10,-10)$};
					\node[above] at (v0) {$(0,10)=\vev^{(1)}$};
					\node[below ] at (v8) {$(0,-10)=\vev^{(9)}$};
					\draw[dotted] (v4) circle (1.8cm);					  
\draw[black] (v0) node[ right] {$e_0$}--(v1) node[below right] {$e_1$}--(v2)--(v3)--(v4)--(v5)--(v6)--(v7)node[above right] {$e_6$}--(v8)node[  right]{$e_7$}--(v0);	
					\node[left] at (0,-5) {$e_8$};
			\end{tikzpicture} 
			\hspace{1.5cm}
			\begin{tikzpicture}[scale=1.5]
					\coordinate (v1) at (18.4,1.6);
					\coordinate (v2) at (19,1);
					\coordinate (v3) at (19.125,0.5);
					\coordinate (v4) at (19.225,0);
					\coordinate (v5) at (19.125,-0.5);
					\coordinate (v6) at (19,-1);
					\coordinate (v7) at (18.4,-1.6);
					\draw[loosely dotted] (v4) circle (1.8cm);
					\node[above right] at (v2) {$\vev^{(3)}=(19,1)$};
					\node[right] at (v3) {$\vev^{(4)}=(19\frac 18,\frac 12)$};
					\node[right] at (v4) {$\vev^{(5)}=(19\frac {9}{40},0)$};
					\node[right] at (v5) {$\vev^{(6)}=(19\frac 18,-\frac 12)$};
					\node[below right] at (v6) {$\vev^{(7)}=(19,-1)$};
					\draw [fill, black] (v2) circle (0.01cm);
					\draw [fill, black] (v3) circle (0.01cm);
					\draw [fill, black] (v4) circle (0.01cm);
					\draw [fill, black] (v5) circle (0.01cm);
					\draw [fill, black] (v6) circle (0.01cm);
					\draw (v1)--(v2)node[below left] {$e_2$}--(v3)node[below left] {$e_3$}--(v4)--(v5)node[above left] {$e_4$}--(v6)node[above left] {$e_5$}--(v7);		
			\end{tikzpicture}
	\caption{The polytope for the proof of Lemma \ref{ex: repeated is stronger}.}\label{Fig:repetitiveneeded}
	\end{figure}
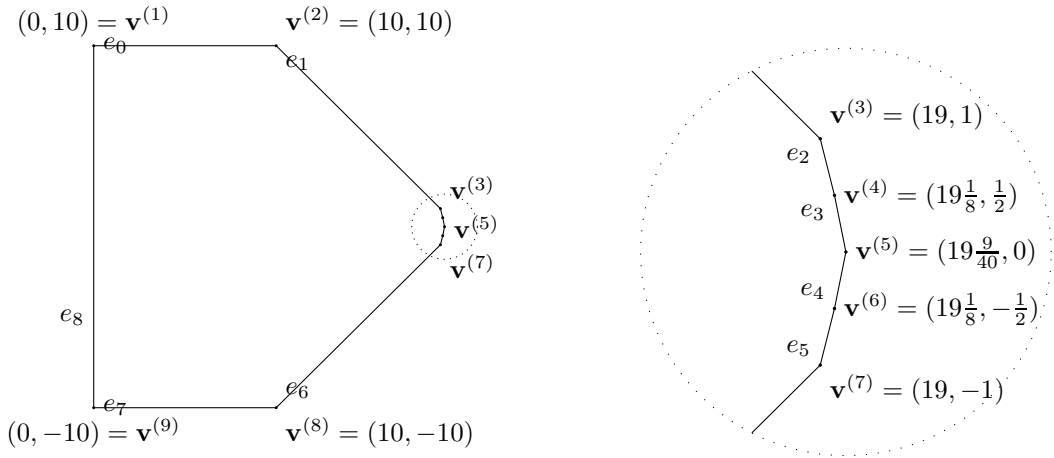	

There is a feasible maximal circuit walk of length three from $\vev^{(1)}$ to $\vev^{(5)}$ that is repetitive.
	\begin{figure}[H]
		\centering
			\begin{tikzpicture}[scale=0.2]
					\coordinate (v0) at (0,10);
					\coordinate (v1) at (10,10);
					\coordinate (v2) at (19,1);
					\coordinate (v3) at (19.125,0.5);
					\coordinate (v4) at (19.225,0);
					\coordinate (v5) at (19.125,-0.5);
					\coordinate (v6) at (19,-1);
					\coordinate (v7) at (10,-10);
					\coordinate (v8) at (0,-10);
					
					\draw [fill, black] (v0) circle (0.05cm);
					\draw [fill, black] (v1) circle (0.05cm);
					\draw [fill, black] (v2) circle (0.05cm);
					\draw [fill, black] (v3) circle (0.05cm);
					\draw [fill, black] (v4) circle (0.05cm);
					\draw [fill, black] (v5) circle (0.05cm);
					\draw [fill, black] (v6) circle (0.05cm);
					\draw [fill, black] (v7) circle (0.05cm);
					\draw [fill, black] (v8) circle (0.05cm);
					\node[above] at (v1) {$\vev^{(2)}=(10,10)$};
					\node[right] at (v4) {$\vev^{(5)}=(19\frac{9}{40},0)$};
					\node[above left] at (v0) {$(0,10)=\vev^{(1)}$};
					\node[left] at (0,0) {$(0,0)$};
					\draw[black] (v0) --(v1)--(v2)--(v3)--(v4)--(v5)--(v6)--(v7)--(v8)--(v0);
					\draw[ line width= 1.5, red, ->] (v0)--(v1);
					\draw[line width= 1.5, red, ->] (v1)--(0,0);
					\draw[line width= 1.5, red, ->] (0,0)--(v4);				
			\end{tikzpicture} 
	\caption{A feasible maximal repetitive circuit walk of length three.}
	\label{fig: repetitive walk}
	\end{figure}
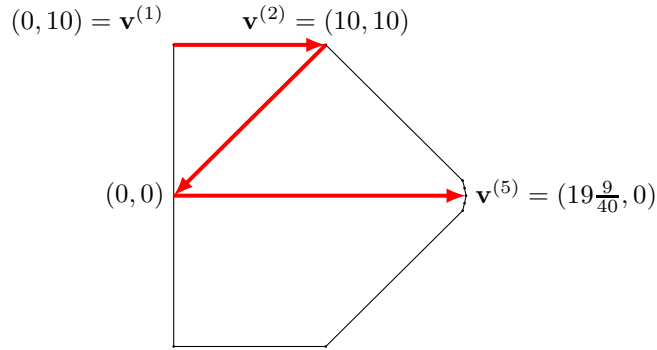	

Every other circuit walk from $\vev^{(1)}$ to $\vev^{(5)}$ has length at least four. 
Therefore we illustrate all possible combination of first (dashed) and second steps in Figure \ref{Fig: first two steps CD_efmr}. From none of these second step points we can reach $\vev^{(5)}$ in only one additional step, except from the point $(0,0)$ in the first picture. But this is the repetitive circuit walk as in Figure \ref{fig: repetitive walk}. For those points for which it might not be immediately obvious that we cannot get to $\vev^{(5)}$ in only one more step, we added the coordinates for a convenient verification that we cannot reach $\vev^{(5)}=(19\frac{9}{40},0)$ with any edge direction.   
	\begin{figure}[H]
		\centering
			\begin{tikzpicture}[scale=0.18]
					\coordinate (v0) at (0,10);
					\coordinate (v1) at (10,10);
					\coordinate (v2) at (19,1);
					\coordinate (v3) at (19.125,0.5);
					\coordinate (v4) at (19.225,0);
					\coordinate (v5) at (19.125,-0.5);
					\coordinate (v6) at (19,-1);
					\coordinate (v7) at (10,-10);
					\coordinate (v8) at (0,-10);					
					\draw [fill, black] (v0) circle (0.05cm);
					\draw [fill, black] (v1) circle (0.05cm);
					\draw [fill, black] (v2) circle (0.05cm);
					\draw [fill, black] (v3) circle (0.05cm);
					\draw [fill, black] (v4) circle (0.2cm);
					\draw [fill, black] (v5) circle (0.05cm);
					\draw [fill, black] (v6) circle (0.05cm);
					\draw [fill, black] (v7) circle (0.05cm);
					\draw [fill, black] (v8) circle (0.05cm);
					\draw [fill, black] (0,0) circle (0.2cm);
					\draw[black] (v0) --(v1)--(v2)--(v3)--(v4)--(v5)--(v6)--(v7)--(v8)--(v0);
					\draw[dashed, line width= 1.5, red, ->] (v0)--(v1);
					\draw[line width= 1.5, red, ->] (v1)--(0,0);
					\draw[line width= 1.5, red, ->] (v1)--(6,-10);
					\draw[line width= 1.5, red, ->] (v1)--(5,-10);
					\draw[line width= 1.5, red, ->] (v1)--(v7);
					\draw[line width= 1.5, red, ->] (v1)--(v2);
					\draw[line width= 1.5, red, ->] (v1)--(14,-6);
					\draw[line width= 1.5, red, ->] (v1)--(13.33,-6.67);					
			\end{tikzpicture} 
			\qquad
			\begin{tikzpicture}[scale=0.18]
					\coordinate (v0) at (0,10);
					\coordinate (v1) at (10,10);
					\coordinate (v2) at (19,1);
					\coordinate (v3) at (19.125,0.5);
					\coordinate (v4) at (19.225,0);
					\coordinate (v5) at (19.125,-0.5);
					\coordinate (v6) at (19,-1);
					\coordinate (v7) at (10,-10);
					\coordinate (v8) at (0,-10);					
					\draw [fill, black] (v0) circle (0.05cm);
					\draw [fill, black] (v1) circle (0.05cm);
					\draw [fill, black] (v2) circle (0.05cm);
					\draw [fill, black] (v3) circle (0.05cm);
					\draw [fill, black] (v4) circle (0.2cm);
					\draw [fill, black] (v5) circle (0.05cm);
					\draw [fill, black] (v6) circle (0.05cm);
					\draw [fill, black] (v7) circle (0.05cm);
					\draw [fill, black] (v8) circle (0.05cm);
					\draw[black] (v0) --(v1)--(v2)--(v3)--(v4)--(v5)--(v6)--(v7)--(v8)--(v0);
					\draw[dashed, line width= 1.5, red, ->] (v0)--(15,-5);
					\draw[line width= 1.5, red, ->] (15,-5)--(15,5);
					\draw[line width= 1.5, red, ->] (15,-5)--(17,3);
					\draw[line width= 1.5, red, ->] (15,-5)--(16.67,3.33);
					\draw[line width= 1.5, red, ->] (15,-5)--(v6);
					\draw[line width= 1.5, red, ->] (15,-5)--(v7);
					\draw[line width= 1.5, red, ->] (15,-5)--(12.5,7.5);
					\draw[line width= 1.5, red, ->] (15,-5)--(11.67,8.33);
					\draw[line width= 1.5, red, ->] (15,-5)--(0,-5);
					\node[above] at (16.67,3.33) {\qquad \footnotesize $(16\frac 23,3\frac 13)$};
					\draw [fill, black] (16.67,3.33) circle (0.2cm);
					\node[right] at (17,3) {\footnotesize $(17,3)$};
					\draw [fill, black] (17,3) circle (0.2cm);
			\end{tikzpicture} 
			\qquad
			\begin{tikzpicture}[scale=0.18]
					\coordinate (v0) at (0,10);
					\coordinate (v1) at (10,10);
					\coordinate (v2) at (19,1);
					\coordinate (v3) at (19.125,0.5);
					\coordinate (v4) at (19.225,0);
					\coordinate (v5) at (19.125,-0.5);
					\coordinate (v6) at (19,-1);
					\coordinate (v7) at (10,-10);
					\coordinate (v8) at (0,-10);					
					\draw [fill, black] (v0) circle (0.05cm);
					\draw [fill, black] (v1) circle (0.05cm);
					\draw [fill, black] (v2) circle (0.05cm);
					\draw [fill, black] (v3) circle (0.05cm);
					\draw [fill, black] (v4) circle (0.2cm);
					\draw [fill, black] (v5) circle (0.05cm);
					\draw [fill, black] (v6) circle (0.05cm);
					\draw [fill, black] (v7) circle (0.05cm);
					\draw [fill, black] (v8) circle (0.05cm);
					\draw[black] (v0) --(v1)--(v2)--(v3)--(v4)--(v5)--(v6)--(v7)--(v8)--(v0);
					\draw[dashed, line width= 1.5, red, ->] (v0)--(5,-10);
					\draw[line width= 1.5, red, ->] (5,-10)--(v8);
					\draw[line width= 1.5, red, ->] (5,-10)--(v7);
					\draw[line width= 1.5, red, ->] (5,-10)--(0,-5);
					\draw[line width= 1.5, red, ->] (5,-10)--(10,10);
					\draw[line width= 1.5, red, ->] (5,-10)--(9,10);
					\draw[line width= 1.5, red, ->] (5,-10)--(5,10);
					\draw[line width= 1.5, red, ->] (5,-10)--(17.5,2.5);
					\draw[line width= 1.5, red, ->] (5,-10)--(1,10);
					\node[above] at (9,10) {\footnotesize $(9,10)$};
					\draw [fill, black] (9,10) circle (0.2cm);
			\end{tikzpicture} 
	\end{figure}
	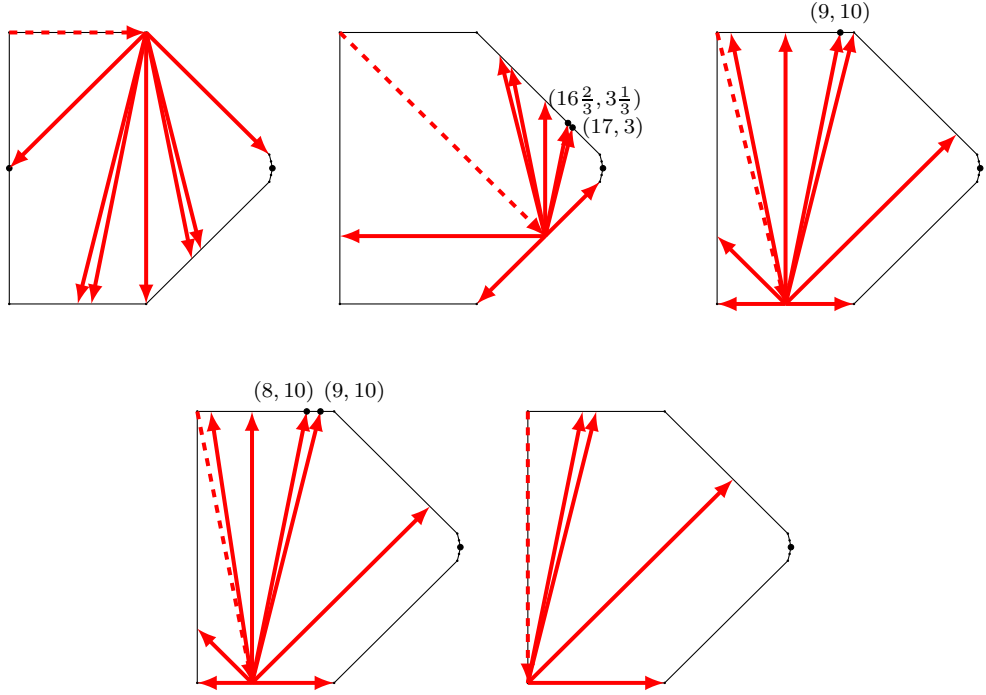
\begin{figure}[H]
		\centering
			\begin{tikzpicture}[scale=0.18]
					\coordinate (v0) at (0,10);
					\coordinate (v1) at (10,10);
					\coordinate (v2) at (19,1);
					\coordinate (v3) at (19.125,0.5);
					\coordinate (v4) at (19.225,0);
					\coordinate (v5) at (19.125,-0.5);
					\coordinate (v6) at (19,-1);
					\coordinate (v7) at (10,-10);
					\coordinate (v8) at (0,-10);					
					\draw [fill, black] (v0) circle (0.05cm);
					\draw [fill, black] (v1) circle (0.05cm);
					\draw [fill, black] (v2) circle (0.05cm);
					\draw [fill, black] (v3) circle (0.05cm);
					\draw [fill, black] (v4) circle (0.2cm);
					\draw [fill, black] (v5) circle (0.05cm);
					\draw [fill, black] (v6) circle (0.05cm);
					\draw [fill, black] (v7) circle (0.05cm);
					\draw [fill, black] (v8) circle (0.05cm);
					\draw[black] (v0) --(v1)--(v2)--(v3)--(v4)--(v5)--(v6)--(v7)--(v8)--(v0);
					\draw[dashed, line width= 1.5, red, ->] (v0)--(4,-10);
					\draw[line width= 1.5, red, ->] (4,-10)--(v8);
					\draw[line width= 1.5, red, ->] (4,-10)--(v7);
					\draw[line width= 1.5, red, ->] (4,-10)--(0,-6);
					\draw[line width= 1.5, red, ->] (4,-10)--(17,3);
					\draw[line width= 1.5, red, ->] (4,-10)--(4,10);
					\draw[line width= 1.5, red, ->] (4,-10)--(8,10);
					\draw[line width= 1.5, red, ->] (4,-10)--(9,10);
					\draw[line width= 1.5, red, ->] (4,-10)--(1,10);
					\node[above] at (9,10) {\footnotesize \quad\quad\quad$(9,10)$};
					\draw [fill, black] (9,10) circle (0.2cm);
					\node[above] at (8,10) {\footnotesize $(8,10)\quad\quad$};
					\draw [fill, black] (8,10) circle (0.2cm);
			\end{tikzpicture} 
			\qquad
			\begin{tikzpicture}[scale=0.18]
					\coordinate (v0) at (0,10);
					\coordinate (v1) at (10,10);
					\coordinate (v2) at (19,1);
					\coordinate (v3) at (19.125,0.5);
					\coordinate (v4) at (19.225,0);
					\coordinate (v5) at (19.125,-0.5);
					\coordinate (v6) at (19,-1);
					\coordinate (v7) at (10,-10);
					\coordinate (v8) at (0,-10);					
					\draw [fill, black] (v0) circle (0.05cm);
					\draw [fill, black] (v1) circle (0.05cm);
					\draw [fill, black] (v2) circle (0.05cm);
					\draw [fill, black] (v3) circle (0.05cm);
					\draw [fill, black] (v4) circle (0.2cm);
					\draw [fill, black] (v5) circle (0.05cm);
					\draw [fill, black] (v6) circle (0.05cm);
					\draw [fill, black] (v7) circle (0.05cm);
					\draw [fill, black] (v8) circle (0.05cm);
					\draw[black] (v0) --(v1)--(v2)--(v3)--(v4)--(v5)--(v6)--(v7)--(v8)--(v0);
					\draw[dashed, line width= 1.5, red, ->] (v0)--(0,-10);
					\draw[line width= 1.5, red, ->] (0,-10)--(v7);
					\draw[line width= 1.5, red, ->] (0,-10)--(15,5);
					\draw[line width= 1.5, red, ->] (0,-10)--(4,10);
					\draw[line width= 1.5, red, ->] (0,-10)--(5,10);
			\end{tikzpicture} 
	\caption{Possible combinations of first and second feasible maximal circuit steps from $\vev^{(1)}$.}
	\label{Fig: first two steps CD_efmr}
	\end{figure}
\end{proof}

The following lemma tells us that for an example for $\CD_{fr}>\CD_{f}$ we need a polytope in dimension at least four. In Lemma \ref{ex: with repeated is stronger} we show that such a polytope indeed exists.
\begin{lemma}
For $n\leq 3$, every optimal feasible circuit walk is non-repetitive. Hence the distances $\CD_{f}$ and $\CD_{fr}$ coincide in this case.
\end{lemma}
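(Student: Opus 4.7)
The strategy is to combine the Caratheodory-type length bound of Lemma \ref{lem:caratheodory} with a brief case analysis on the length $k$ of an optimal feasible walk. By Lemma \ref{lem:caratheodory} together with Lemma \ref{lem:smeansbr}, one has $\CD_{fr} \leq \CD_{fs} \leq n-\rank(A) \leq n \leq 3$, so any optimal feasible walk has length at most $n$, and we only need to exclude repetition in walks of length $k \in \{2,3\}$. The rough plan is then to assume, for contradiction, that an optimal walk of length $k$ reuses some circuit $\veg$, and to construct a strictly shorter feasible walk.

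For $k = 2$, if both steps use the same circuit $\veg$ with multipliers $\alpha_0, \alpha_1 > 0$, the total displacement is $(\alpha_0+\alpha_1)\veg$, which by convexity of $P$ is realized by a single feasible circuit step, contradicting optimality. For $k = 3$ (possible only in $n = 3$), I case-split on the positions $0 \leq i < j \leq 2$ of the repeated circuit $\veg$. If $j = i+1$, I combine the two consecutive steps into one of size $\alpha_i+\alpha_j$; the intermediate vertex of the new length-2 walk is the original $\vey^{(i+2)} \in P$, again contradicting optimality. If all three steps use the same $\veg$, a single step suffices.

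The main obstacle is the remaining subcase $\veg^0 = \veg^2 = \veg$ with $\veg^1 \neq \veg$, in which the four walk vertices are coplanar in the affine plane $H = \vey^{(0)} + \operatorname{span}\{\veg, \veg^1\}$. My plan is to examine the planar slice $Q = P \cap H$, a two-dimensional convex region containing the trapezoid $\operatorname{conv}\{\vey^{(0)}, \vey^{(1)}, \vey^{(2)}, \vey^{(3)}\}$, and to produce a feasible length-2 walk from $\vey^{(0)}$ to $\vey^{(3)}$ using circuits of $P$ that lie in $H$. The naive candidates $\vey^{(0)}+(\alpha_0+\alpha_2)\veg$ and $\vey^{(0)}+\alpha_1\veg^1$ need not lie in $Q$ when $Q$ is essentially the trapezoid itself, so one may have to invoke a different pair of circuits supported in $H$. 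I expect this planar reduction to be the delicate step, relying on convexity of $Q$ together with a Caratheodory-style decomposition of $\vey^{(3)}-\vey^{(0)}$ restricted to $H$ in order to guarantee a feasible intermediate point; the two-dimensional bound $\CD_f \leq 2$ established already for polygons is the natural tool to apply to $Q$.
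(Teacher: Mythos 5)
Your approach is genuinely different from the paper's, and it has a real gap in the hardest case.

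The paper's proof is a one-line bound comparison: an \emph{optimal} repetitive walk must have length at least three (a length-two repetitive walk can be collapsed to a single step), while Lemmas \ref{lem:caratheodory} and \ref{lem:smeansbr} guarantee a feasible, non-repetitive walk of length at most $n\leq 3$. So if any optimal feasible walk were repetitive, we would have $3 \leq \CD_f \leq \CD_{fr} \leq 3$, forcing $\CD_f = \CD_{fr}$ at once. You already assemble both of these facts in your first paragraph (you note $\CD_{fr}\le\CD_{fs}\le n\le 3$, and that only lengths $k\in\{2,3\}$ matter), but you do not combine them; instead you launch a structural case analysis.

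Your cases $k=2$ and adjacent repetition in $k=3$ are fine: merging the two consecutive steps using the same $\veg$ produces a shorter walk whose intermediate point is one of the original $\vey^{(i)}\in P$. But the non-adjacent case $\veg^0=\veg^2=\veg$, $\veg^1\neq\pm\veg$, has two problems. First, you aim to produce a \emph{strictly shorter} feasible walk, i.e.\ a feasible walk of length $2$; but in the regime you are considering ($\CD_f = 3$) no such walk exists by the very definition of $\CD_f$, so this contradiction cannot close. What is actually true (and all that the Hence-clause needs) is the existence of a non-repetitive walk of the \emph{same} length. Second, even setting that aside, the planar reduction does not go through as stated: the two-dimensional bound $\CD^2_f\le 2$ for polygons speaks about the circuits of the slice $Q=P\cap H$, and the circuits of $Q$ are not in general circuits of $P$ (circuits depend on the defining matrices, not just on the feasible region), while the circuits of $P$ lying in $H$ may be too few to support a feasible length-two walk inside $Q$. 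You flag this step as delicate, but it is precisely where the argument breaks, and it is also the step the sandwich $3\le\CD_f\le\CD_{fr}\le 3$ makes unnecessary.
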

\begin{proof}
Clearly repetitive circuit walks have length at least three. In case any optimal circuit walk between two vertices is repetitive, any feasible non-repetitive circuit walk must have length at least four. But in dimension $\leq 3$ there always is a such a circuit walk of length at most three by Lemmas \ref{lem:caratheodory} and \ref{lem:smeansbr}.
\end{proof}

\begin{lemma}[$\CD_{fr(b)}>\CD_{f(b)}$]\label{ex: with repeated is stronger}
For $n=4$, there is a polytope with a pair of vertices for which every optimal feasible circuit walk is repetitive (but non-backwards). Hence the distances $\CD_{f}$ and $\CD_{fr}$ and the distances $\CD_{fb}$ and $\CD_{frb}$ differ in this case.
\end{lemma}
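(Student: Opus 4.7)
The plan is to construct an explicit four-dimensional polytope $P$ together with a pair of vertices $\vev^{(1)},\vev^{(2)}$ for which the displacement $\vev^{(2)}-\vev^{(1)}$ admits a length-three feasible representation as $\veg + \veh + \veg$ (two uses of the same circuit $\veg$ in the same direction, sandwiching one use of a distinct circuit $\veh$) but no feasible representation as a sequence of at most three \emph{pairwise distinct} circuit moves. Since both occurrences of $\veg$ carry the same sign, this walk is simultaneously repetitive and non-backwards, so the same example witnesses both $\CD_f<\CD_{fr}$ and $\CD_{fb}<\CD_{fbr}$.

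My first step is to pick a two-dimensional base polygon in the spirit of Lemma \ref{ex: repeated is stronger} — whose geometry makes a repetitive feasible circuit walk strictly beat any non-repetitive one — and lift it to dimension four. A convenient lift is a slightly perturbed product with a second low-dimensional polytope, or the addition of two coupled slack coordinates; the point of the extra dimensions is to enlarge the circuit set just enough that a length-three \emph{non-backwards} walk becomes available (the two-dimensional example of Lemma \ref{ex: repeated is stronger} by itself requires a backwards repetition), while still forbidding any non-repetitive shortcut. Once $P$ is fixed I would exhibit the length-three walk explicitly, verifying feasibility of the two intermediate points by direct substitution into the inequalities defining $P$.

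The hard part is ruling out every non-repetitive feasible walk of length at most three. Walks of length one or two can be handled directly: one checks that $\vev^{(2)}-\vev^{(1)}$ is not a positive multiple of any single circuit, and that for every pair of distinct circuits $\{\veg^1,\veg^2\}$ with $\vev^{(2)}-\vev^{(1)} = \alpha_1\veg^1+\alpha_2\veg^2$ at least one of the two orderings produces an intermediate point outside $P$. For three-step walks, I would enumerate the circuits of the defining matrix $B$ via the elementary vectors of the associated matroid, classify the unordered triples $\{\veg^1,\veg^2,\veg^3\}$ of distinct circuits whose positive combinations sum to $\vev^{(2)}-\vev^{(1)}$, and show in each case that the narrow-corridor structure built into $P$ forces at least one of the intermediate points $\vev^{(1)}+\alpha_{i_1}\veg^{i_1}$ or $\vev^{(1)}+\alpha_{i_1}\veg^{i_1}+\alpha_{i_2}\veg^{i_2}$ to violate a facet inequality, regardless of the order in which the steps are applied. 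Designing $P$ so that this finite case analysis is tractable — keeping the circuit set small yet rich enough to support a length-three non-backwards repetitive walk — is the main technical obstacle.
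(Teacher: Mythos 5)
Your proposal is a plan rather than a proof: it correctly identifies the shape of the argument (build a $4$-dimensional polytope with a repetitive non-backwards feasible walk of length three from $\vev^{(1)}$ to $\vev^{(2)}$, then rule out all non-repetitive feasible walks of length at most three), but the entire content of the lemma --- the actual polytope, its circuits, the explicit walk, and the case analysis --- is left unconstructed. As written there is nothing to verify: ``pick a base polygon in the spirit of Lemma~\ref{ex: repeated is stronger} and lift it'' is not a construction, and it is not clear such a lift exists. Indeed the paper's example is not a lift of that polygon at all; it is a truncated box $\{\vex: 0\le x_i,\ x_1\le 3/2,\ x_2,x_3,x_4\le 1,\ x_1+x_j\le 2\}$, whose circuits are the $\pm\vece^i$ together with $\pm(\{1\}\times\{0,-1\}^3)$, and the obstruction comes from the coupling constraints $x_1+x_j\le 2$ combined with the asymmetric bound $x_1\le 3/2$: to reach $(1,1,1,1)$ from the origin in three feasible steps one is forced to spend a step raising $x_1$, then lower $x_1$ while raising the others, then raise $x_1$ again, which repeats $\vece^1$.

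A second issue worth flagging: the ``lifting'' heuristic is also suspect because the mechanism behind Lemma~\ref{ex: repeated is stronger} is maximality, not feasibility alone --- for feasible (non-maximal) walks in dimension $\le 3$ every optimal walk is automatically non-repetitive (the preceding lemma), so there is no $2$-dimensional obstruction to lift; the phenomenon is genuinely new in dimension four. Relatedly, your aside that ``the two-dimensional example of Lemma~\ref{ex: repeated is stronger} by itself requires a backwards repetition'' does not appear to be correct: the repeated circuit there is $(1,0)^\T$, used twice with the same sign, so that walk is already non-backwards; the reason you cannot reuse it here is the maximality issue just mentioned. To turn your sketch into a proof you would need to write down a concrete $P$, compute its circuit set, exhibit the walk $\vece^1,(-1,1,1,1)^\T,\vece^1$ (or something analogous), and carry out the finite enumeration ruling out non-repetitive length-$\le 3$ walks --- all of which is the substance the proposal currently omits.
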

\begin{proof}
Let the polytope 
	\[
			P=\left\{\ \vex \in \R^4:\ \vel\leq A\vex\leq \veu\ \right\}
	\]
be defined by
	\[
		A=\left(\begin{array}{cccc}
			1 & 0 & 0 & 0 \\
			0 & 1 & 0 & 0 \\
			0 & 0 & 1 & 0 \\
			0 & 0 & 0 & 1 \\
			1 & 1 & 0 & 0 \\
			1 & 0 & 1 & 0 \\
			1 & 0 & 0 & 1 		
		\end{array}\right),\;
		\vel=\left(\begin{array}{c}
			0 \\ 0 \\ 0 \\ 0 \\ -\infty \\ -\infty \\ - \infty
		\end{array}\right),\;
		\veu=\left(\begin{array}{c}
			3/2 \\ 1 \\ 1 \\ 1 \\ 2 \\ 2 \\ 2
		\end{array}\right) \; .
	\]
The rows of the matrix $A$ define directions of $11$ hyperplanes bounding the polytope.

The \emph{vertices} are the intersections of four of these hyperplanes, in case this intersection is a single point that is contained in $P$. A simple computation shows that we have $23$ vertices $\left(\{0,1\}^4 \backslash \{(1,0,0,0)^T\}\right) \; \cup \; \left(\left\{\frac 32 \right\} \times \left\{0,\frac 12\right\}^3\right)$. In particular $\vev^{(1)}:=(0,0,0,0)^T$ and $\vev^{(2)}:=(1,1,1,1)^T$ are vertices of $P$.

The \emph{circuits} are the potential edge directions for varying $\vel$ and $\veu$, that is, they are given by the intersection of three hyperplanes, in case this intersection is 1-dimensional. Again it is not hard to compute that these directions are $\pm \vece^i$ (where $\vece^{i}$ is the $i$'th unit vector) and $\pm \left(\{1\}\times \{0,-1\}^3 \right)$ and hence constitute the set of circuits.
\bigskip

\noindent \underline{Claim}:\; Every feasible circuit walk from $\vev^{(1)}=(0,0,0,0)^T$ to $\vev^{(2)}=(1,1,1,1)^T$ of length at most three is repetitive.  
\smallskip

\noindent \textit{Proof of claim:}\;  We investigate how we can reach $\vev^{(2)}=(1,1,1,1)^T$ in at most three feasible circuit steps. Observe that in particular we cannot apply circuits that violate the lower bounds of $0$ or the upper bounds of $\frac 32$ (respectively $1$).
Hence as a first feasible circuit step, we can only apply $\vece^1$ or w.l.o.g.{} $\vece^2$. 

Applying $\vece^2$ yields a point $(0,x_2,0,0)^T$ with $x_2\leq 1$. In the second step we can either apply w.l.o.g.{} $\vece^3$, giving $(0,x_2, x_3,0)^T$ with $x_2,x_3\leq 1$, or we apply $\vece^1$ or $(1,-1,0,0)^T$ giving $(x_1,x_2,0,0)^T$ with $x_1\leq \frac 32,\ x_2\leq 1$. From neither of this points we can go to $\vev^{(2)}=(1,1,1,1)^T$ with one more circuit step: We cannot increase the first and the last component at the same time, nor increase the last two components by one simultaneously without decreasing the first component to $\leq \frac 12$.

Applying $\vece^1$ as a first step yields a point $(x_1,0,0,0)^T$ with $x_1 \leq \frac 32$. In the next step we can either increase only one component w.l.o.g.{} only the second one (by applying $\vece^2$ or $(-1,1,0,0)^T$), giving $(x_1,x_2,0,0)^T$ with $x_1\leq \frac 32,\ x_2\leq 1$, but as before we cannot reach $\vev^{(2)}$ in one more circuit step. Otherwise in the second step we increase at least two components (by applying $(-1,1,1,1)^T$ or w.l.o.g.{} $(-1,1,1,0)^T$), giving $(x_1,x_2,x_3,x_4)^T$ with $x_1+x_2=x_1+x_3=x_1+x_4\leq \frac 32,\ x_2,x_3,x_4\leq 1$  (respectively $(x_1,x_2,x_3,0)^T$ with $x_1+x_2=x_1+x_3\leq \frac 32,\ x_2,x_3\leq 1$). In particular we know that $x_2=x_3<1$ or $x_1<1$. Hence to reach to $\vev^{(2)}=(1,1,1,1)^T$ in one more step, we have to increase  the second and third component simultaneously to $1$ (which decreases the first component to $\leq \frac 12$),  or we have to increase the first one without decreasing any other component (that is, we apply $\vece^1$ \emph{again}). This proves our claim.
\bigskip

On the other hand, applying the circuits $\vece^1$, $(-1,1,1,1)^T$ and $\vece^1$ with step length one each is indeed a feasible non-backwards circuit walk of length three from $\vev^{(1)}$ to $\vev^{(2)}$.
\end{proof}

\begin{lemma}[$\CD_{fs}>\CD_{fbr}$]\label{Ex: sign pattern compatible is stronger}
For $n=3$, there is a polytope with a pair of vertices for which every optimal feasible circuit walk is not sign-compatible, and there is such a walk that is non-repetitive and non-backwards. Hence the distance $\CD_{fs}$ differs from $\CD_{fbr}$, $\CD_{fr}$, $\CD_{fb}$, and $\CD_{f}$.
\end{lemma}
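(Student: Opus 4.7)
The plan is to construct an explicit three-dimensional polytope $P$ with two distinguished vertices $\vev^{(1)}, \vev^{(2)}$ such that the shortest feasible (non-backwards, non-repetitive) circuit walk between them has length two, but every feasible sign-compatible circuit walk has length at least three. The guiding principle is to exploit the structure of the sign-compatibility cone $C_\sim = \{\vex : B\vex \sim 0\}$ introduced in the proof of Lemma \ref{lem:caratheodory}: by making several rows of $B(\vev^{(2)}-\vev^{(1)})$ vanish, sign-compatibility pins any admissible walk to a low-dimensional flat, while a shortcut through the interior that temporarily violates this sign pattern can realize $\vev^{(2)}-\vev^{(1)}$ in only two circuit steps.

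A natural template is a box-like polytope in $\R^3$ whose facets generate many edge directions, then truncated or slanted so that (i) there is a feasible pair of circuits $\veg^1, \veg^2$ with $\vev^{(1)} + \alpha_1 \veg^1 \in P$ and $\vev^{(1)} + \alpha_1 \veg^1 + \alpha_2 \veg^2 = \vev^{(2)}$ where $\veg^1$ (or $\veg^2$) violates the sign pattern of $B(\vev^{(2)}-\vev^{(1)})$ on some facet normal orthogonal to $\vev^{(2)}-\vev^{(1)}$; and (ii) the set $S$ of circuits whose $B$-image is sign-compatible with $B(\vev^{(2)}-\vev^{(1)})$ is so small that no pair in $S$ yields a feasible length-two walk from $\vev^{(1)}$ to $\vev^{(2)}$.

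After fixing such a polytope, I would proceed in four steps. First, enumerate the circuits of $P$; since $A$ is empty (inequality description only), these are the primitive integer vectors orthogonal to pairs of facet normals, and for each one I would record the sign vector $\sigma(B\veg) \in \{-,0,+\}^{m_B}$. Second, compute $\sigma(B(\vev^{(2)}-\vev^{(1)}))$ and extract the sublist $S$ of candidate sign-compatible circuits; then do a finite case-check showing that $\vev^{(2)}-\vev^{(1)}$ cannot be written as $\alpha_1 \veg^1 + \alpha_2 \veg^2$ with $\veg^1,\veg^2 \in S$, $\alpha_i>0$, and $\vev^{(1)} + \alpha_1 \veg^1 \in P$. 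Third, exhibit the two shortcut circuits $\veg^1, \veg^2$ together with the corresponding multipliers and the intermediate point $\vey = \vev^{(1)}+\alpha_1 \veg^1 \in P$; since $\veg^1 \neq \pm\veg^2$ this walk is automatically non-repetitive and non-backwards.

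Finally, conclude: we have a feasible non-repetitive non-backwards walk of length two, so $\CD_{fbr} \leq 2$, while no sign-compatible walk of length $\leq 2$ exists and a direct edge step $\vev^{(2)}-\vev^{(1)} = \alpha \veg$ is excluded (since $\vev^{(2)}-\vev^{(1)}$ is not a positive circuit multiple), giving $\CD_{fs} \geq 3$. By the weak inequalities $\CD_{fs} \geq \CD_{fbr} \geq \CD_{fr},\CD_{fb},\CD_f$ from the hierarchy (and Lemma \ref{lem:smeansbr}, which guarantees $\CD_{fs}$ is well-defined and that any optimal sign-compatible walk is feasible, non-backwards and non-repetitive), strict inequality against all four of $\CD_{fbr}, \CD_{fr}, \CD_{fb}, \CD_f$ follows.

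The main obstacle is the finite case-check in the second step: the polytope must be designed carefully so that $|S|$ is small and easy to inventory, yet rich enough to include the relatively short sign-compatible circuits one might naively try. A useful sanity check is to ensure that $\dim(C_\sim)$ is strictly less than $n=3$ (for example by arranging at least one vanishing coordinate of $B(\vev^{(2)}-\vev^{(1)})$ whose corresponding facet direction is not itself a circuit), so that there is essentially only one one-parameter family of sign-compatible decompositions of $\vev^{(2)}-\vev^{(1)}$ to inspect, while the full circuit set spans $\R^3$ and contains a genuine non-sign-compatible shortcut through the interior.
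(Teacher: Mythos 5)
Your overall strategy matches the paper's: construct an explicit three-dimensional polytope so that the set $S$ of circuits sign-compatible with $\vev^{(2)}-\vev^{(1)}$ is small and no pair from $S$ realizes a feasible length-two walk, while a non-sign-compatible pair of circuits does. But the proposal stops at the blueprint stage and never produces the polytope, the circuit list, the sign vectors, or the case-check that you yourself flag as ``the main obstacle.'' For an existence statement of this kind, the explicit example \emph{is} the proof; without it the argument has a genuine gap. For comparison, the paper takes $P=\{\vex\in\R^3:\vel\le B\vex\le\veu\}$ with
\[
B=\begin{pmatrix}1&0&0\\0&1&0\\0&0&1\\1&1&0\\1&0&1\end{pmatrix},\quad
\vel=(0,0,0,-\infty,-\infty)^\T,\quad \veu=(\infty,1,1,2,2)^\T,
\]
and $\vev^{(1)}=(0,0,0)^\T$, $\vev^{(2)}=(1,1,1)^\T$. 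Then $B(\vev^{(2)}-\vev^{(1)})=(1,1,1,2,2)^\T$, the only sign-compatible circuits are the three unit vectors, so $\CD_{fs}\ge 3$; yet the two-step walk via $(2,0,0)^\T$ using circuits $(1,0,0)^\T$ and $(-1,1,1)^\T$ is feasible, non-repetitive and non-backwards, giving $\CD_{fbr}\le 2$.

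Two smaller points. First, your concluding ``sanity check'' -- arranging $\dim(C_\sim)<3$ by forcing a zero in $B(\vev^{(2)}-\vev^{(1)})$ -- is not what makes the example work and is in fact not satisfied by the paper's construction: there $B(\vev^{(2)}-\vev^{(1)})$ has no zero entries and $C_\sim$ is full-dimensional. What matters is not the dimension of $C_\sim$ but that the circuits lying in $C_\sim$ are exactly the three unit vectors, and $(1,1,1)^\T$ needs all three of them. Second, the paper's version of the conclusion is cleaner than yours: one does not need a separate argument ruling out a one-step sign-compatible walk, since $(1,1,1)^\T$ has support three in the unit vectors and is not itself a circuit, so the minimum over sign-compatible walks is automatically three. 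If you supply an explicit polytope and carry out the finite check you outlined, your argument would close the gap; as written, it does not yet constitute a proof.
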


\begin{proof}
Consider the polytope 
	\[
			P=\left\{\ \vex \in \R^3:\ \vel\leq B\vex\leq \veu\ \right\}
	\]
defined by
	\[
		B=\left(\begin{array}{ccc}
			1 & 0 & 0  \\
			0 & 1 & 0 \\
			0 & 0 & 1 \\
			1 & 1 & 0 \\
			1 & 0 & 1 \\
		\end{array}\right),\;
		\vel=\left(\begin{array}{c}
			0 \\ 0 \\ 0 \\ -\infty \\ - \infty
		\end{array}\right),\;
		\veu=\left(\begin{array}{c}
			\infty \\ 1 \\ 1 \\ 2\\2 
		\end{array}\right) \; .
	\]
All possible edge directions $\veg$ of $P$ are given by 
	\[
			\pm \left(\begin{array}{r} 1\\0\\0		\end{array}\right),\;
			\pm \left(\begin{array}{r} 0\\1\\0		\end{array}\right),\;
			\pm \left(\begin{array}{r} 0\\0\\1		\end{array}\right),\;
			\pm \left(\begin{array}{r} 1\\-1\\0		\end{array}\right),\;
			\pm \left(\begin{array}{r} 1\\0\\-1	\end{array}\right),\;
			\pm \left(\begin{array}{r} 1\\-1\\-1	\end{array}\right)\; ,
	\]
and the corresponding vectors $B\veg$ are 
	\[
			\pm \left(\begin{array}{r} 1\\0\\0 \\1\\1		\end{array}\right),\;
			\pm \left(\begin{array}{r} 0\\1\\0 \\1\\0		\end{array}\right),\;
			\pm \left(\begin{array}{r} 0\\0\\1 \\0\\1		\end{array}\right),\;
			\pm \left(\begin{array}{r} 1\\-1\\0 \\0\\1		\end{array}\right),\;
			\pm \left(\begin{array}{r} 1\\0\\-1 \\1\\0		\end{array}\right),\;
			\pm \left(\begin{array}{r} 1\\-1\\-1 \\0\\0		\end{array}\right)\;.
	\]
We want to perform circuit walks from $\vev^{(1)}=(0,0,0)^T$ to $\vev^{(2)}=(1,1,1)^T$. We have $B\left(\vev^{(2)}-\vev^{(1)}\right)=(1,1,1,2,2)^T$. Hence only the unit vectors $\vece^{1}$, $\vece^{2}$ and $\vece^{3}$ can be applied in sign-compatible walks.
Thus an optimal feasible sign-compatible walk from $\vev^{(1)}=(0,0,0)^T$ to $\vev^{(2)}=(1,1,1)^T$ has length at least three, as we have to apply all three unit vectors.
	\begin{figure}[H]
		\centering
			\begin{tikzpicture}[scale=2] 
					\coordinate (v1) at (0,0);
					\coordinate (v2) at (2,0);
					\coordinate (v3) at (1.5,0.5);
					\coordinate (v4) at (0.5,0.5);
					\coordinate (v5) at (0,1);
					\coordinate (v6) at (1,1);
					\coordinate (v7) at (1.5,1.5);
					\coordinate (v8) at (0.5,1.5);
					\node[below] at (v1) {$\vev^{(1)}$};
					\node[above] at (v7) {$\vev^{(2)}$};				
					\draw (v1)--(v2);
					\draw[dashed] (v2)--(v3);
					\draw[dashed] (v3)--(v4);
					\draw[dashed] (v1)--(v4);
					\draw (v1)--(v5);
					\draw (v2)--(v6);
					\draw (v2)--(v7);
					\draw[dashed] (v3)--(v7);
					\draw[dashed] (v4)--(v8);
					\draw (v5)--(v6);
					\draw (v6)--(v7);
					\draw (v7)--(v8);
					\draw (v8)--(v5);
					\draw[line width= 1.5, red, ->] (v1)--(v5);
					\draw[line width= 1.5, red, ->] (v5)--(v6);
					\draw[line width= 1.5, red, ->] (v6)--(v7);
			\end{tikzpicture}
	\caption{A feasible sign-compatible circuit walk of length three.}
	\end{figure}
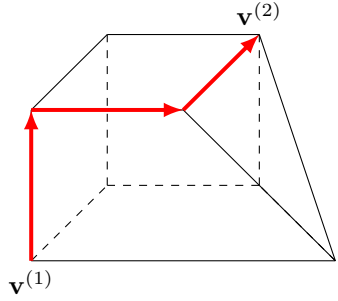
On the other hand, there is a feasible non-repetitive non-backwards circuit walk of length two that is not sign-compatible.
	\begin{figure}[H]
		\centering
			\begin{tikzpicture}[scale=2]
					\coordinate (v1) at (0,0);
					\coordinate (v2) at (2,0);
					\coordinate (v3) at (1.5,0.5);
					\coordinate (v4) at (0.5,0.5);
					\coordinate (v5) at (0,1);
					\coordinate (v6) at (1,1);
					\coordinate (v7) at (1.5,1.5);
					\coordinate (v8) at (0.5,1.5);
					\node[below] at (v1) {$\vev^{(1)}$};
					\node[above] at (v7) {$\vev^{(2)}$};				
					\draw (v1)--(v2);
					\draw[dashed] (v2)--(v3);
					\draw[dashed] (v3)--(v4);
					\draw[dashed] (v1)--(v4);
					\draw (v1)--(v5);
					\draw (v2)--(v6);
					\draw (v2)--(v7);
					\draw[dashed] (v3)--(v7);
					\draw[dashed] (v4)--(v8);
					\draw (v5)--(v6);
					\draw (v6)--(v7);
					\draw (v7)--(v8);
					\draw (v8)--(v5);
					\draw[line width= 1.5, red, ->] (v2)--(v7);
					\draw[line width= 1.5, red, ->] (v1)--(v2);
			\end{tikzpicture}
	\caption{A feasible not sign-compatible circuit walk of length two.}
	\end{figure}
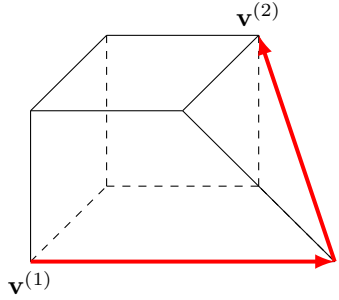
\end{proof}

The following lemma tells us that for an example for $\CD_{fb}>\CD_{f}$ we need a polytope in dimension at least five.
\begin{lemma}\label{ex: 5dim}
For $n \leq 4$, every optimal feasible circuit walk is non-backwards. Hence the distances $\CD_{f}$ and $\CD_{fb}$ coincide in this case.
\end{lemma}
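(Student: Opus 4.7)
The plan is to mirror the proof of the preceding lemma: I will show that any optimal feasible backwards walk has length at least four, and then derive a contradiction with Theorem \ref{thm:numberofsteps}, which for $n \leq 4$ gives $\CD_{fb} \leq \CD_{fs} \leq n - \rank(A) \leq 4$.

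To establish that lower bound on the length of optimal backwards walks, it suffices to prove that any feasible backwards walk from $\vev^{(1)}$ to $\vev^{(2)}$ of length two or three admits a strictly shorter feasible replacement. Length two is immediate: such a walk uses steps $\alpha_0\veg,-\alpha_1\veg$, so $\vev^{(2)}-\vev^{(1)} = (\alpha_0-\alpha_1)\veg$ is a scalar multiple of a single circuit, and convexity of $P$ implies that the segment $[\vev^{(1)},\vev^{(2)}]$ is feasible, yielding a one-step replacement. For length three, the walk must use a backwards pair $\veg,-\veg$ together with at most one additional circuit $\veh$; if all three step directions are parallel to $\pm\veg$, the same convexity argument reduces the walk to length one. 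Otherwise the three steps are $\alpha_0\veg, \beta\veh, -\alpha_1\veg$ in some order. When the pair $\veg,-\veg$ is adjacent, combining it into a single circuit step gives a length-two walk: the two endpoints of this sub-walk are feasible and both lie on a line parallel to $\veg$, so by convexity a single step in direction $\veg$ or $-\veg$ connects them. When the pair is non-adjacent, I propose the shortcut $\vev^{(1)} \to \vev^{(1)} + (\alpha_0-\alpha_1)\veg \to \vev^{(2)}$ in the case $\alpha_0 \geq \alpha_1$, and $\vev^{(1)} \to \vev^{(1)} + \beta\veh \to \vev^{(2)}$ in the case $\alpha_0 < \alpha_1$.

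The main obstacle is certifying that the proposed intermediate points are in $P$. The key observation is that $\vev^{(1)} + (\alpha_0-\alpha_1)\veg$ lies on the segment connecting $\vev^{(1)}$ to the first intermediate point of the original walk when $\alpha_0 \geq \alpha_1$, while $\vev^{(1)} + \beta\veh$ can be rewritten as $\vev^{(2)} + (\alpha_1-\alpha_0)\veg$ and hence lies on the segment connecting $\vev^{(2)}$ to the second intermediate of the original walk when $\alpha_0 < \alpha_1$; convexity of $P$ then gives feasibility in both subcases. With every length-three backwards walk thus shown to be strictly suboptimal, the argument concludes as in the preceding lemma: if every optimal feasible walk were backwards, then $\CD_f \geq 4$ and every non-backwards feasible walk would have length at least $\CD_f + 1 \geq 5$, forcing $\CD_{fb} \geq 5$, which contradicts the bound $\CD_{fb} \leq 4$ above.
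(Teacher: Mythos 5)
Your proof is correct and follows essentially the same route as the paper: show that any feasible backwards walk of length at most three can be strictly shortened by a convexity argument, then combine with the Carath\'eodory-type bound (Theorem~\ref{thm:numberofsteps}, built on Lemmas~\ref{lem:caratheodory} and~\ref{lem:smeansbr}) to force a contradiction. You handle the adjacent-backwards-pair and all-parallel subcases explicitly, which the paper leaves implicit, but the shortcuts you construct in the non-adjacent case with $\alpha_0\geq\alpha_1$ and $\alpha_0<\alpha_1$ are the same two replacement walks as in the paper's proof.
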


\begin{proof}
We first show that if an optimal feasible circuit walk is backwards then it has length at least four. Clearly it has length at least three. Assume there is a polytope with vertices $\vev^{(1)}$ and $\vev^{(2)}$ that are connected by a feasible circuit walk 
	\[
		\vev^{(1)}=\vey^{(0)},\quad
		\vey^{(1)}=\vey^{(0)}+\alpha_1 \veg^{1}, \quad
		\vey^{(2)}=\vey^{(1)}+\alpha_2 \veg^{2}, \quad
		\vey^{(3)}=\vey^{(2)}+\alpha_3 (-\veg^{1})=\vev^{(2)}
	\]
that is, the walk is backwards. But then there is a feasible circuit walk from $\vev^{(1)}$ to $\vev^{(2)}$ of length two,
	\[
		\vev^{(1)}=\vey^{(0)},\quad
		\bar\vey^{(1)}=\vey^{(0)}+\left(\alpha_1-\alpha_3\right) \veg^{1}, \quad
		\bar\vey^{(2)}=\bar\vey^{(1)}+\alpha_2 \veg^{2} 
		\qquad \text{if }\alpha_1\geq \alpha_3\; ,
	\]
respectively
	\[
		\vev^{(1)}=\vey^{(0)}, \quad
		\bar\vey^{(1)}=\vey^{(0)}+\alpha_2 \veg^{2}, \quad
		\bar\vey^{(2)}=\bar\vey^{(1)}+\left(\alpha_3-\alpha_1\right)(-\veg^{1})
		\qquad \text{if }\alpha_1<\alpha_3 \; .
	\]
Clearly these circuit walks satisfy $\bar\vey^{(2)}=\vev^{(2)}$ and are indeed feasible by convexity of the polytope. Therefore a feasible backwards circuit walk of length three cannot be optimal.

Now in case any optimal circuit walk between two vertices is backwards, any feasible non-backwards circuit walk must have length at least five. But Lemmas \ref{lem:caratheodory} and \ref{lem:smeansbr} imply that for $n\leq 4$ there always is a feasible non-backwards circuit walk of length at most four.
\end{proof}


\section{Diameter hierarchy in dimension two}\label{sec:twodim}
We conclude this paper with a discussion of the different notions of circuit distances in dimension $n=2$. It is easy to see that in this situation the graph diameter is given by $\left\lfloor \frac{k}{2}\right\rfloor$, where $k$ is the number of vertices of the polygon. In particular this number tells us which values $\CD^2_{efm}$ can take. In this section we prove Theorem \ref{Thm: circuit distances in dim 2} that states the possible ranges of all the notions of circuit distances, and  tells us which distance categories coincide for $n=2$ and which remain different. Finally, we will exhibit that $\CD^2_{efm(b)(r)}$ and $\CD^2_{fm(b)(r)}$ can differ significantly in Lemma \ref{lem: distances differ in dim 2}.

\medskip

\noindent\textit{Proof of Theorem \ref{Thm: circuit distances in dim 2}.}
Note that Lemma \ref{ex: edge-directions is stronger}, Lemma \ref{ex: arbitrary length steps is stronger}, Lemma \ref{ex: backwards is stronger} and Lemma \ref{ex: repeated is stronger} show the inequality of the corresponding circuit distances in dimension two and hence also prove the strict inequalities in the circuit hierarchy in Figure \ref{Fig: awesome circuit hierarchy dim 2}. Again the numbers near the inequality symbols refer to these lemmas.

A polygon on $k$ vertices and a pair of vertices with $\CD^2_{efmb}\left(\vev^{(1)},\vev^{(2)}\right)=k-3$ is readily derived from the one given in Figure \ref{Fig: CD0 backwards steps} in Lemma \ref{Ex: CD0 backwards steps} by putting $k-4$ vertices `to the left' of $\vev^{(1)}$ and $\vev^{(2)}$.

In dimension two there are no repetitive edge walks and hence $\CD^2_{efm}=\CD^2_{efmr}$. The claimed range of values for distances of vertices is obvious.

For $\CD^2_{fm(b)(r)}$ we only have to show that there are indeed vertices with feasible maximal circuit distance $ \left\lfloor \frac{k}{2}\right\rfloor$. Lemma \ref{thm: linear CD_fm} proves this for even $k$ and can easily be extended to odd $k$ by adding another vertex.

For
\[\CD^2_{f}=\CD^2_{fr}=\CD^2_{fb}=\CD^2_{fbr}=\CD^2_{fs}=\CD^2_{}\in \{1,2\}\]
it is enough to recall Lemma \ref{lem:caratheodory}. 
\hfill $\square$ \vspace{5pt}

To complete the proof of Theorem \ref{Thm: circuit distances in dim 2}, we still have to show that there are polygons with vertices that have feasible maximal circuit distance $ \left\lfloor \frac{k}{2}\right\rfloor$. For the sake of a clean presentation, we provide the proof for $k$ even. It can readily be extended to the general case.

\begin{lemma}\label{thm: linear CD_fm}
Let $k$ be even. Then there is a polygon on $k$ vertices with  diameter $\frac{k}{2}$ with respect to $\CD_{fm}^2$ .
\end{lemma}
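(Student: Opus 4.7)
The upper bound $\CD^2_{fm}(v^{(1)}, v^{(2)}) \leq m$ is immediate: a convex $2m$-gon has graph diameter $m$, realized by any edge walk between antipodal vertices, and every edge walk is a feasible maximal circuit walk. All work goes into the matching lower bound, for which the plan is a genericity argument.

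Label the vertices of a convex $2m$-gon cyclically as $v_0, \ldots, v_{2m-1}$ and set $v^{(1)} := v_0$, $v^{(2)} := v_m$. The circuits of $P$ are the $2m$ vectors $\pm(v_{i+1}-v_i)$, so a feasible maximal circuit walk from $v_0$ is determined by a sequence of index-sign pairs $(j_1, s_1), \ldots, (j_\ell, s_\ell)$ together with a compatible \emph{combinatorial type} recording which edge of $P$ is hit at each step. For fixed such data, the walk's endpoint $y^{(\ell)}$ is a rational function of the vertex coordinates of $P$, which range over the open set of strictly convex $2m$-gons inside $\R^{4m}$. The condition $y^{(\ell)} = v_m$ therefore cuts out a real-algebraic subset of the parameter space, and provided it is not identically satisfied, a proper one. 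Taking the union over the finitely many choices of data with $\ell < m$ still yields a proper algebraic subset, so a generic convex $2m$-gon avoids it. Any such polygon has $\CD^2_{fm}(v_0, v_m) \geq m$, which combined with the upper bound gives equality.

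The main obstacle is the ``not identically satisfied'' clause: for each index-sign sequence and combinatorial type of length $\ell < m$, one needs to know that some polygon produces a walk endpoint distinct from $v_m$. Heuristically this is clear — moving $v_m$ slightly while keeping the rest of the polygon fixed changes the right-hand side but not the left — but a rigorous argument must handle the boundaries between combinatorial types (where the walk passes exactly through a vertex of $P$), at which the rational formula for the endpoint changes piecewise. The cleanest way to sidestep this is to exhibit the example explicitly: take a convex $2m$-gon whose $4m$ vertex coordinates are algebraically independent over $\mathbb{Q}$. In such a polygon, ``walk endpoint $= v_m$'' after $\ell < m$ steps would amount to a nontrivial polynomial relation among the coordinates, which cannot hold; a brief case analysis over the finite list of length-$<m$ index-sign sequences and combinatorial types confirms that each such condition is indeed nontrivial. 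The odd-$k$ extension mentioned in Theorem~\ref{Thm: circuit distances in dim 2} is then obtained by inserting an extra vertex on one edge of the constructed polygon without changing the distances between the old vertices.
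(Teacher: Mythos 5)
Your genericity argument has a genuine gap, and it is not a small one: the lemma cannot be proved by perturbation to general position, because ``circuit distance $<m$'' holds on an \emph{open} set of convex $2m$-gons, not just on a proper algebraic subvariety. Concretely, consider a walk whose combinatorial type is: some sequence of maximal steps landing at a point $p$ in the relative interior of the edge $e_{m-1}=(v_{m-1},v_m)$, followed by one final step in the circuit direction $v_m-v_{m-1}$. That final maximal step travels along $e_{m-1}$ and terminates at $v_m$ \emph{tautologically}, for every polygon realizing this combinatorial type; the equation ``endpoint $=v_m$'' imposes no polynomial relation among the coordinates at all. The combinatorial type itself is cut out by strict inequalities (e.g., ``the penultimate step's circuit direction lies in the visibility cone of $e_{m-1}$ from the previous point''), so it occupies an open region of parameter space, which contains polygons with algebraically independent coordinates. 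Your appeal to algebraic independence therefore does nothing here, and the ``brief case analysis'' you defer would in fact fail for exactly these cases. Lemma~\ref{lem: distances differ in dim 2} already illustrates the phenomenon: the regular $2m$-gon has $\CD^2_{fm}$-distance $2$, realized by one interior step landing on an edge incident to $v_m$ followed by one step along that edge, and this persists under small perturbations.

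The paper's proof is structured precisely to defeat such short-cut walks, and cannot be replaced by genericity. After fixing the cyclic sequence of edge slopes (so the circuits are pinned down), it builds the vertex pairs $\vev^{(i)},\vev^{(k-i)}$ one at a time, always placing the next pair at the point of \emph{largest} $x_1$-coordinate reachable by any feasible maximal walk of length at most $k/2$ in the current partial polygon $P_i$ that does not already traverse the new edge $e_i$. This guarantees the invariant \textbf{(*)}: any bounded-length maximal walk that passes an $x_1$-threshold must go through one of the constructed vertices, and since maximal steps snap to vertices when traveling along an edge, no walk can skip ahead. The lower bound $\CD^2_{fm}(\vev^{(0)},\vev^{(k/2)})\geq k/2$ then drops out by counting the pairs that must be visited. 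In short, what is needed is a carefully engineered polygon, not a generic one; you should redo this with an explicit construction in the spirit of the paper's (or find some other explicit family and verify \textbf{(*)} directly).
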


\begin{proof}
Let $k$ even be given. We construct a polygon on vertices $\vev^{(0)},\ldots,\vev^{(k-1)}$ with edges $\left(\vev^{(i)},\vev^{(i+1)}\right)=e_i$ for $i=0,\ldots, k-1$ (where $\vev^{(k)}:=\vev^{(0)}$) such that $\CD^2_{fm}\left(\vev^{(0)}, \vev^{(\frac k2)}\right)=\frac k2$. The corresponding edge walk will be an optimal maximal circuit walk. 

Note that for $n=2$, the edges are the facets of the polygon. Thus there is a direct correspondence of the circuits and the edges as `edge directions'. We will exploit this for a simpler wording in the following, talking about `walking along edges' or `in direction of an edge'.

First of all we fix the edge directions and hence the set of circuits associated with $P$. To this end choose $\frac k2$ slopes $0>s_0>s_1>s_2>\ldots>s_{\frac{k}{2}-1}$ arbitrarily. In the upcoming construction we assign edge $e_0$ slope $-s_0$; edge $e_{k-1}$ slope $s_0$, and for $i=1,\ldots, \frac k2 -1$ we assign $e_i$ slope $s_i$ and $e_{k-1-i}$ slope $-s_i$. This will produce a polygon of shape as depicted in Figure \ref{Fig:shapeofconstruction}.
	\begin{figure}
		\centering
			\begin{tikzpicture}[scale=0.27]
					\coordinate (v0) at (0,0);
					\coordinate (v1) at (10,10);
					\coordinate (v7) at (10,-10);
					\coordinate (v2) at (15,5);
					\coordinate (v6) at (15,-5);
					\coordinate (v3) at (16.6,1.75);
					\coordinate (v5) at (16.6,-1.75);
					\coordinate (v4) at (16.9,0);

					\draw [fill, black] (v1) circle (0.05cm);
					\draw [fill, black] (v2) circle (0.05cm);
					\draw [fill, black] (v3) circle (0.05cm);
					\draw [fill, black] (v4) circle (0.05cm);
					\draw [fill, black] (v5) circle (0.05cm);
					\draw [fill, black] (v6) circle (0.05cm);
					\draw [fill, black] (v7) circle (0.05cm);
					\draw [fill, black] (v0) circle (0.05cm);
					\node[above] at (v1) {$\vev^{(1)}$};
					\node[above right] at (v2) {$\vev^{(2)}$};
					\node[right] at (v3) {$\vev^{(3)}$};
					\node[right] at (v4) {$\vev^{(4)}=\vev^{(\frac k2)}$};
					\node[right] at (v5) {$\vev^{(5)}$};
					\node[below right] at (v6) {$\vev^{(6)}$};
					\node[below ] at (v7) {$\vev^{(7)}$};
					\node[left] at (v0) {$\vev^{(0)}$};

					\draw[black] (v0) node[above right] {$e_0$}--(v1) node[below right] {$e_1$}--(v2) node[below right] {$e_2$}--(v3)node[below left] {$e_3$}--(v4)--(v5)node[above left] {$e_4$}--(v6)node[above right] { $e_5$}--(v7)node[above right] {$e_6$}--(v0)node[below right] {$e_7$};	
			\end{tikzpicture} 
		\caption{Sketch of the polygon for $k=8$.}\label{Fig:shapeofconstruction}
	\end{figure}
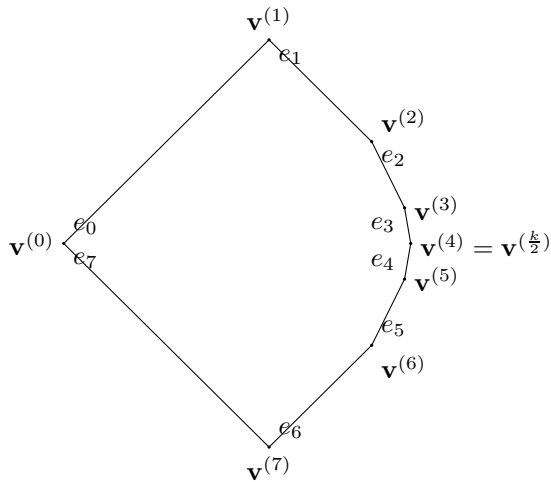	
Observe that the slopes of the edges on an edge walk from $\vev^{(1)}$ to $\vev^{(\frac k2)}$ iteratively become less, just as the slopes of the edges from $\vev^{(k-1)}$ to $\vev^{(\frac k2)}$ become steeper. Further the polygon is symmetric with respect to the first coordinate axis (which we call $x_1$-axis from now on).

It remains to arrange the vertices. We do this iteratively, fixing a pair of vertices $\vev^{(i)}, \vev^{(k-i)}$ in each step such that in $P$ the following property \textbf{(*)} is satisfied:
\medskip

\begin{tabular}{cc}
\textbf{(*)}& \begin{minipage}{12cm} Every maximal feasible circuit walk starting at $\vev^{(0)}$ of length at most $\frac k 2 $ that contains a point $\vev'$ with larger $x_1$-coordinate than $\vev^{(i)}$ or $\vev^{(k-i)}$ (or equivalently, that hits an edge $e_j$ with $i\leq j \leq k-1-j$) must contain  $\vev^{(i)}$ or $\vev^{(k-i)}$.\end{minipage}
\end{tabular}
\medskip

This will immediately imply that the circuit distance $\CD^2_{fm}$ from $\vev^{(0)}$ to $\vev^{(\frac k2 )}$ is $\frac k2$: Every circuit walk of length at most $\frac k 2$ from $\vev^{(0)}$ to $\vev^{(\frac k2 )}$ does reach a $\vev'$ with larger $x_1$-coordinate than every $\vev^{(i)}$ for all $i=1,\ldots,\frac k2 -1$. (We will informally call this `going beyond $\vev^{(i)}$' from now on.) Hence by \textbf{(*)}, any such circuit walk must contain a vertex from each of these $\frac k2-1$ pairs of vertices. This takes at least $\frac k2-1$ steps, and it takes one additional step to reach the target vertex $\vev^{(\frac k2 )}$.
\bigskip

\emph{Construction of initial vertices:} Fix $\vev^{(0)}=(0,0)$. Let edges $e_0$, respectively $e_{k-1}$, start at $\vev^{(0)}$ and end in a $\vev^{(1)}$ on $e_0$ and a $\vev^{(k-1)}$ on $e_{k-1}$ such that $\vev^{(1)}$ and $\vev^{(k-1)}$ have identical $x_1$-coordinates.

\textbf{(*)} holds for the pair $\vev^{(1)},\vev^{(k-1)}$: At $\vev^{(0)}$ we can only apply circuit steps with directions $e_0$ or $e_{k-1}$ (any other direction is too steep). As we apply maximal steps, the second point of any circuit walk is either $\vev^{(1)}$ or $\vev^{(k-1)}$.
\medskip

\emph{Construction of a pair of vertices:} Let the vertices $\vev^{(0)},\vev^{(1)},\ldots,\vev^{(i)},\vev^{(k-i)},\ldots \vev^{(k-1)}$, $i<\frac k2-1$, be constructed and satisfy \textbf{(*)}.
We now construct the vertices $\vev^{(i+1)},\vev^{(k-i-1)}$ together with the incident edges $e_i, e_{k-1-i}$.

\begin{enumerate}
	\item Let edges with directions $e_{i}$  (respectively $e_{k-1-i}$) start at $\vev^{(i)}$ (respectively $\vev^{(k-i)}$). Let $\vew^{(i)}$ be their intersection (which has $x_2$-coordinate $0$). This defines a polygon $P_i$.

	\begin{figure}[H]
		\centering
			\begin{tikzpicture}[scale=0.27]
					\coordinate (v0) at (0,0);
					\coordinate (v1) at (10,10);
					\coordinate (v7) at (10,-10);
					\coordinate (v2) at (15,5);
					\coordinate (v6) at (15,-5);
					\coordinate (v3) at (16.6,1.75);
					\coordinate (v5) at (16.6,-1.75);
					\coordinate (v4) at (16.9,0);
					\coordinate (w) at (17.5,0);

					\draw [fill, black] (v1) circle (0.05cm);
					\draw [fill, black] (v2) circle (0.05cm);
					\draw [fill, black] (v7) circle (0.05cm);
					\draw [fill, black] (v6) circle (0.05cm);
					\draw [fill, black] (w) circle (0.05cm);
					\draw [fill, black] (v0) circle (0.05cm);
					\node[above] at (v1) {$\vev^{(1)}$};
					\node[above right] at (v2) {$\vev^{(2)}$};
					\node[below] at (v7) {$\vev^{(7)}$};
					\node[below right] at (v6) {$\vev^{(6)}$};
					\node[right] at (w) {$\vew^{(2)}$};
					\node[left] at (v0) {$\vev^{(0)}$};

					\draw[black] (v0) node[above right] {$e_0$}--(v1) node[below right] {$e_1$}--(v2) node[below right] {$e_2$}--(w)--(v6)node[above right] { $e_5$}--(v7)node[above right] {$e_6$}--(v0)node[below right] {$e_7$};	
			\end{tikzpicture} 
	\caption{The polygon $P_2$ for $k=8$.}
	\end{figure}
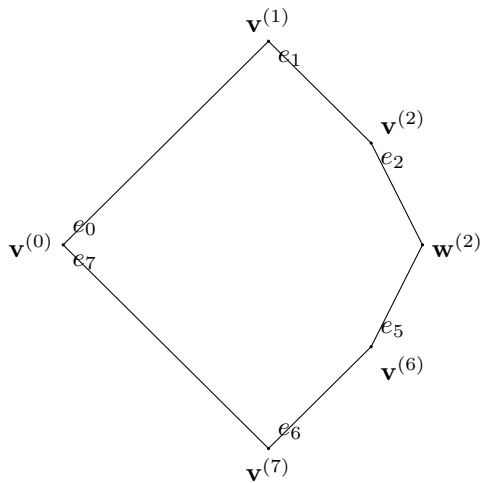	
	
		\item In $P_i$ consider all feasible maximal circuit walks of length at most $\frac k2$ that begin at $\vev^{(0)}$ and do not walk along the (actual) edge $e_{i}$ (respectively $e_{k-1-i}$) to the vertex $\vew^{(i)}$. Then none of these walks contains $\vew^{(i)}$: A step that hits $\vew^{(i)}$ is not allowed to go along the edges we just inserted by definition and we cannot reach $\vew^{(i)}$ from $e_0$ (respectively $e_{k-1}$) in one circuit step by construction. Hence it must start at an edge $e_j$ with w.l.o.g.{} $0<j<i$. But then there would have been a feasible maximal circuit walk of length at most $\frac k2$ in $P_j$ that goes beyond $\vev^{(j)}$ which contradicts the definition of $\vev^{(j)}$ in $P_j$. 
		\item Among all points contained in all of these circuit walks, there are points that have a largest $x_1$-value. These points lie on the edges $e_{i}$ (respectively $e_{k-1-i}$) by construction. We now set $\vev^{(i+1)}$ to be such a point on $e_{i}$ (respectively $\vev^{(k-1-i)}$ on $e_{k-1-i}$). This yields a pair of vertices $\vev^{(i+1)},\vev^{(k-1-i)}$ of identical $x_1$-value and with $\vev^{(i+1)},\vev^{(k-1-i)}\neq \vew^{(i)}$ (with the same arguments as before).
\end{enumerate}

We have to show that $\vev^{(i+1)}$ and $\vev^{(k-i-1)}$ satisfy \textbf{(*)} in $P$.
Therefore, consider a maximal feasible circuit walk in $P$ starting at $\vev^{(0)}$ and of length at most $\frac k 2 $ that goes beyond $\vev^{(i+1)}$. This walk in $P$ translates to a walk in $P_i$ and clearly these walks in $P$ and $P_i$ coincide until they go beyond $\vev^{(i+1)}$ (in both $P$ and $P_i$) by applying some circuit $\veg^{j}$ at some point $\vey^{(j)}$ in the respective circuit walks. Let $\vey^{(j+1)}$ be the subsequent point in the circuit walk in $P$, respectively $\bar\vey^{(j+1)}$ in $P_i$. In particular these $\vey^{(j+1)}$ and $\bar\vey^{(j+1)}$ have a larger $x_1$-value than $\vev^{(i+1)}$. By construction of $\vev^{(i+1)}$ we can only go beyond $\vev^{(i+1)}$ in at most $\frac k2$ circuit steps in $P_i$  when going along the (actual) edge w.l.o.g.{} $e_i$. Hence w.l.o.g.{} $\veg^{j}$ is the edge direction $e_i$ and $\vey^{(j)}\in e_i$. Thus we have  $\vey^{(j)}=\vev^{(i+1)}$ as we apply maximal steps, in particular the vertex $\vev^{(i+1)}$ is contained in the circuit walk in $P$.
\medskip 

\emph{Construction of target vertex:} Set $\vev^{(\frac k2)}:= \vew^{(i)}$ for $i=\frac k2-1$. This concludes the construction of a polygon $P$ with property \textbf{(*)}.
\end{proof}

Theorem \ref{Thm: circuit distances in dim 2} tells us that $\CD^2_{(f(b)(r)(s))}$ is constant always one or two, while the other distances can be linear in the number of vertices, in particular $\CD^2_{fmb}$. It remains to investigate how $\CD^2_{efm(b)(r)}$ and $\CD^2_{fm(b)(r)}$ are related to each other. We conclude by demonstrating that there are polygons for which the former grows linear in the number of vertices while the latter remains constant.

\begin{lemma}\label{lem: distances differ in dim 2}
Let $P$ be a regular polygon on $k$ vertices. Then the diameter with respect to $\CD^2_{efm(b)(r)}$ is given by 
	$
		\begin{cases}\frac{k-1}{2} & \text{if $k$  odd} \\ \frac{k}{2}&  \text{if $k$ even}\end{cases} $\; ,
and the diameter with respect to $\CD^2_{fm(b)(r)}$ is given by
		$\begin{cases}1 &  \text{if $k$ odd} \\ 2&  \text{if $k$ even}\end{cases}$.
\end{lemma}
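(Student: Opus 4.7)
The plan is to exploit the rotational symmetry of the regular polygon by placing the vertices at $v_j=(\cos(2\pi j/k),\sin(2\pi j/k))$ for $j=0,\dots,k-1$. The sum-to-product identity gives
\[ v_{j+d}-v_j \;=\; 2\sin(\pi d/k)\bigl(-\sin(\pi(2j+d)/k),\,\cos(\pi(2j+d)/k)\bigr), \]
so the edge directions are proportional to $(-\sin(\pi(2j+1)/k),\cos(\pi(2j+1)/k))$, and the diagonal $v_0v_d$ is parallel to some edge direction iff the congruence $2m+1\equiv d\pmod k$ admits an integer solution in $m$ -- unconditionally for $k$ odd, and exactly for $d$ odd when $k$ is even. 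This parity dichotomy drives the entire proof.

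The edge-walk claim is immediate: the graph of the regular polygon is the cycle $C_k$, whose graph diameter is $\lfloor k/2\rfloor$, matching both claimed values. A shortest path along $C_k$ runs monotonically along one of the two arcs, so it uses pairwise distinct edges and never reverses direction; hence all four notions $\CD^2_{efm(b)(r)}$ coincide with $\lfloor k/2\rfloor$.

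For the circuit-walk part the technical heart is showing that a direct step $v_0\to v_d$ along a parallel circuit $g$ is automatically maximal. Since $v_d$ is a proper convex vertex, its tangent cone is pointed. The reverse direction $-g$ points back along the feasible segment into $P$ and therefore lies in the tangent cone, so $g$ itself cannot lie there; hence $v_d+\varepsilon g\notin P$ for small $\varepsilon>0$, and convexity extends this to all $\varepsilon>0$. For $k$ odd this furnishes a length-one feasible maximal walk between any two vertices, and length-one walks are trivially non-repetitive and non-backwards, so $\CD^2_{fm(b)(r)}=1$.

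For $k$ even, pairs $(v_0,v_d)$ with $d$ even and nonzero need at least two steps by the parity obstruction, and I realise the upper bound of two by routing through any odd-indexed intermediate vertex $v_j$: both index-differences $j$ and $d-j$ are odd, giving two maximal circuit steps. A short computation shows that the direction angles of the two circuits, taken modulo $\pi$, differ by $\pi d/k\not\equiv 0\pmod\pi$ for $d\in\{1,\dots,k-1\}$, so the two circuits are neither identical nor opposite -- simultaneously handling non-repetitive and non-backwards. The main obstacle is the maximality verification at each polygon vertex, which relies on strict convexity; the $C_k$-diameter fact, the parity classification of diagonals, and the check that the two circuits remain distinct are routine.
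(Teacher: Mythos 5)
Your proof is correct and follows essentially the same strategy as the paper: use the parity of the index difference to classify which diagonals of the regular $k$-gon are parallel to circuits, then take one maximal circuit step along such a diagonal for $k$ odd, and route through an odd-indexed intermediate vertex for the even-distance pairs when $k$ is even. You are somewhat more explicit than the paper about verifying maximality (via the pointed tangent cone at the target vertex) and about checking that the two steps of the length-two walk are neither identical nor opposite circuits, but the underlying ideas are the same.
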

\begin{proof}
For $\CD^2_{efm(b)(r)}$ the claim is obvious.

To determine the circuit distances $\CD_{fm(b)(r)}$, let $\vev^{(1)},\ldots,\vev^{(k)}$ ($\vev^{(k+1)}:=\vev^{(1)}$) be the vertices of the polygon and $(\vev^{(i)}, \vev^{(i+1)})$ its edges.
Let $k$ be odd. It suffices to show that from $\vev^{(1)}$ we can reach any other vertex in just a single circuit step. For this, it is enough to see $\vev^{(2i)}= \vev^{(1)}+ \alpha \cdot  (\vev^{(i+1)}- \vev^{(i)})$ for some $\alpha$ and $\vev^{(2i+1)}= \vev^{(1)}+ \alpha' \cdot (\vev^{(\frac{k+1}{2}+i)}- \vev^{(\frac{k+1}{2}+i+1)})$ for some $\alpha'$.

	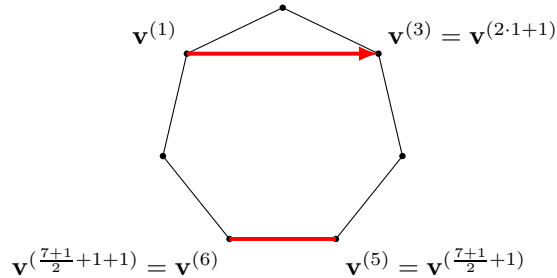
\begin{figure}[H]
		\centering
			\begin{tikzpicture}[scale=0.7]
\useasboundingbox (-5,-6) rectangle (5,1.2);
					\coordinate (v5) at (1,-4.38);
					\coordinate (v4) at (2.25,-2.81);
					\coordinate (v3) at (1.8,-0.87);
					\coordinate (v2) at (0,0); 
					\coordinate (v1) at (-1.8,-0.87);
					\coordinate (v7) at (-2.25,-2.81);
					\coordinate (v6) at (-1,-4.38);
					\draw[black] (v1)--(v2)--(v3)--(v4)--(v5)--(v6)--(v7)--(v1);
					\node[above left] at (v1) {$\vev^{(1)}$};
					\node[above right] at (v3) {$\vev^{(3)}=\vev^{(2\cdot 1+1)}$};
					\node[below right] at (v5) {$\vev^{(5)}=\vev^{(\frac{7+1}{2} +1)}$};
					\node[below left] at (v6) {$\vev^{(\frac{7+1}{2} +1+1)}=\vev^{(6)}$};
					\draw [fill, black] (v1) circle (0.05cm);
					\draw [fill, black] (v2) circle (0.05cm);
					\draw [fill, black] (v3) circle (0.05cm);
					\draw [fill, black] (v4) circle (0.05cm);
					\draw [fill, black] (v5) circle (0.05cm);
					\draw [fill, black] (v6) circle (0.05cm);
					\draw [fill, black] (v7) circle (0.05cm); 
					\draw [line width = 1.5, red, ->] (v1)--(v3);
					\draw [line width = 1.5, red] (v6)--(v5);
			\end{tikzpicture}
	\caption{A circuit step from $\vev^{(1)}$ to $\vev^{(3)}$ and the edge of corresponding direction.}
	\end{figure}

Now let $k$ be even. First observe that there are always two collinear edges and hence not all pairs of vertices are connected by a single circuit step, i.e{.} the diameter cannot be equal to one. As before, we have $\vev^{(2i)}= \vev^{(1)}+ \alpha \cdot  (\vev^{(i+1)}- \vev^{(i)})$ for some $\alpha$. In case we want to walk from $\vev^{(1)}$ to $\vev^{(2i+1)}$ we first go to $\vev^{(2i)}$ and then along edge $(\vev^{(2i)},\vev^{(2i+1)})$, as depicted in the walk from $\vev^{(1)}$ to $\vev^{(5)}$ in Figure \ref{Fig:walkeven}. Hence the regular $k$-polygon for $k$ even has diameter $2$ with respect to  $\CD^2_{fm(b)(r)}$.

	
	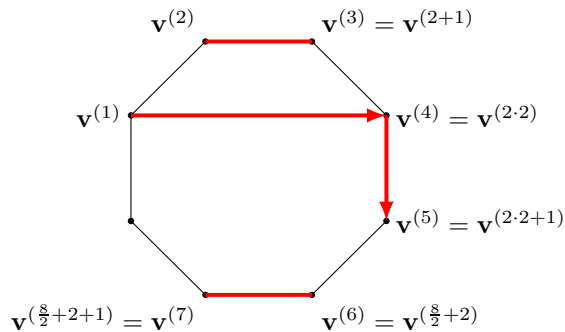
\begin{figure}[H]
		\centering
			\begin{tikzpicture}[scale=0.7]
\useasboundingbox (-5,-6) rectangle (5,1.2);
					\coordinate (v2) at (-1,0);
					\coordinate (v3) at (1,0);
					\coordinate (v4) at (2.4,-1.4);
					\coordinate (v5) at (2.4,-3.4); 
					\coordinate (v6) at (1,-4.8);
					\coordinate (v7) at (-1,-4.8);
					\coordinate (v8) at (-2.4,-3.4);
					\coordinate (v1) at (-2.4,-1.4);
					\draw[black] (v1)--(v2)--(v3)--(v4)--(v5)--(v6)--(v7)--(v8)--(v1);
					\draw [fill, black] (v1) circle (0.05cm);
					\draw [fill, black] (v2) circle (0.05cm);
					\draw [fill, black] (v3) circle (0.05cm);
					\draw [fill, black] (v4) circle (0.05cm);
					\draw [fill, black] (v5) circle (0.05cm);
					\draw [fill, black] (v6) circle (0.05cm);
					\draw [fill, black] (v7) circle (0.05cm); 
					\draw [fill, black] (v8) circle (0.05cm); 
					\node[left] at (v1) {$\vev^{(1)}$};
					\node[above left] at (v2) {$\vev^{(2)}$};
					\node[above right] at (v3) {$\vev^{(3)}=\vev^{(2+1)}$};
					\node[right] at (v4) {$\vev^{(4)}=\vev^{(2\cdot 2)}$};
					\node[right] at (v5) {$\vev^{(5)}=\vev^{(2\cdot 2+1)}$};
					\node[below right] at (v6) {$\vev^{(6)}=\vev^{(\frac{8}{2} +2)}$};
					\node[below left] at (v7) {$\vev^{(\frac{8}{2} +2+1)}=\vev^{(7)}$};
					\draw[line width = 1.5, red, ->] (v1)--(v4);
					\draw[line width = 1.5, red, ->] (v4)--(v5);
					\draw[line width = 1.5, red] (v6)--(v7);
					\draw[line width = 1.5, red] (v2)--(v3);
			\end{tikzpicture}
	\caption{Optimal circuit walks from $\vev^{(1)}$ to $\vev^{(4)}$ and $\vev^{(5)}$ and edges of corresponding direction.}
	\end{figure}	\label{Fig:walkeven}
\end{proof}

\section{Acknowledgements:} We are grateful for the comments we receive from Raymond Hemmecke, Jon Lee, and Eddie Kim regarding some of these
constructions. We are also grateful to Jake Miller for his help during the crafting of this paper. The first author was supported by the Humboldt foundation.

\bibliographystyle{plain}
\bibliography{circuitDiameterHierarchy}

\end{document}